\numberwithin{equation}{section}
\newtheorem{theorem}{Theorem}[section]
\newtheorem{lemma}[theorem]{Lemma}
\newtheorem{remark}[theorem]{Remark}
\newtheorem{definition}[theorem]{Definition}
\newtheorem{proposition}[theorem]{Proposition}
\numberwithin{equation}{section}
\newenvironment{proof2.3}{{\noindent \it Proof of Theorem \ref{thm2.3}.}}{\hfill $\Box$}
\newenvironment{proof2.4}{{\noindent \it Proof of Theorem \ref{thm2.4}.}}{\hfill $\Box$}
\newenvironment{proof2.6}{{\noindent \it Proof of Theorem \ref{thm2.6}.}}{\hfill $\Box$}
\newenvironment{proof2.7}{{\noindent \it Proof of Theorem \ref{thm2.7}.}}{\hfill $\Box$}
\begin{document}

\setlength{\baselineskip}{16pt}{\setlength\arraycolsep{2pt}}

\title{The horizontal magnetic primitive equations approximation of the anisotropic MHD equations in a thin 3D domain}

\author{Jie Zhang$^1$\ \ \ and   \ Wenjun Liu$^{1,2,}$\footnote{Corresponding author. \ \   Email address: wjliu@nuist.edu.cn (W. J. Liu); jiezhang@nuist.edu.cn (J. Zhang)}   \medskip\\ 1. School of Mathematics and Statistics, Nanjing University of Information  \\
	Science and Technology, Nanjing 210044, China
	\medskip\\
	2. Jiangsu International Joint Laboratory on System Modeling and Data Analysis, \\ Center for Applied Mathematics of Jiangsu Province, Nanjing University \\ of Information Science and Technology, Nanjing 210044, China
}



\date{}
\maketitle

\begin{abstract}
In this paper, we give a rigorous justification of the deviation of the primitive equations with only horizontal viscosity and magnetic diffusivity (PEHM) as the small aspect ratio limit of the incompressible three-dimensional scaled horizontal viscous MHD (SHMHD) equations. Choosing an aspect ratio parameter $\varepsilon \in(0,\infty)$, we consider the case that if the horizontal and vertical viscous coefficients are of $\mu = O(1)$ and $\nu = O({\varepsilon ^\alpha })$, and the orders of magnetic diffusion coefficients $k$ and $\sigma$ are $k = O(1)$ and $\sigma = O({\varepsilon ^\alpha })$, with $\alpha > 2$, then the limiting system is the PEHM as $\varepsilon$ goes to zero. For ${H^1}$-initial data, we prove that the global weak solutions of the SHMHD equations converge strongly to the local-in-time strong solutions of the PEHM, as $\varepsilon$ tends to zero. For ${H^1}$-initial data with additional regularity $({\partial _z}{\tilde A_0},{\partial _z}{\tilde B_0}) \in {L^p}(\Omega )(2<p<\infty)$, we slightly improve the well-posed result in \cite{2017-Cao-Li-Titi-Global} to extend the local-in-time strong convergences to the global-in-time one. For ${H^2}$-initial data, we show that the local-in-time strong solutions of the SHMHD equations converge strongly to the global-in-time strong solutions of the PEHM, as $\varepsilon$ goes to zero. Moreover, the rate of convergence is of the order $O({\varepsilon ^{\gamma /2}})$, where $\gamma  = \min \{ 2,\alpha  - 2\}$ with $\alpha \in (2,\infty )$. It should be noted that in contrast to the case $\alpha > 2$, the case $\alpha =2$ has been investigated by Du and Li in \cite{2023-Du-Li}, in which they consider the PEM and the rate of global-in-time convergences is of the order $O(\varepsilon)$.
\end{abstract}

\noindent {\bf 2010 Mathematics Subject Classification:} 35Q30, 76D03, 76D05, 35Q86, 76W05. \\
\noindent {\bf Keywords:} Anisotropic MHD equations, Small aspect ratio limit, Primitive equations with horizontal magnetic field, Strong convergence.
\maketitle

\section{Introduction }
Magnetohydrodynamic (MHD), which was initiated in 1970 by Nobel Prize-winning physicist Hannes Alfvén, is a discipline dedicated to the study of the interaction between conductive fluids and magnetic fields. The physical concept behind MHD implies that magnetic fields can induce electric currents in a moving conductive fluid, which in turn exerts the forces acting on the fluid and also influences the magnetic field itself. MHD is widely used in a variety of fields, such as space exploration, geophysics and space weather \cite{2000-Groth-De-Gombosi,2004-Manchester-Gombosi}, astrophysics \cite{1991-Priest-Hood,1988-Zirin}, metallurgy and materials processing, and so on.

A comprehensive set of equations describing MHD phenomena can be generated by combining the Navier-Stokes equations of fluid dynamics with Maxwell’s equations of electromagnetic dynamics. The MHD equations are regarded as the fundamental model for describing the behavior and dynamics of plasma (see \cite{1972-Duvaut-Lions-1,1983-Sermange-Temam}). Similar to the three-dimensional (3D) incompressible Navier-Stokes equations
, there exists at least one global weak solution with finite energy to the three-dimensional incompressible MHD equations. In particular, a pioneering work is given by Duvaut and Lions in \cite{1972-Duvaut-Lions-1} (see also \cite{1983-Sermange-Temam}), in which the global-in-time weak solutions to the 3D MHD equations was constructed for any incompressible initial pair $({u_0},{b_0}) \in {L^2}(R^3 )$ and the existence of the local-in-time strong solutions to 3D MHD equations was proved for any incompressible initial pair $({u_0},{b_0}) \in {H^k}(R^3)$ with $k>5/2$. But until now, for 3D case, the subject of the regularity and uniqueness of the weak solutions or even that of the global existence of the local-in-time strong solutions remains an outstanding challenging open problem in the field of mathematical fluid dynamics (see \cite{1972-Duvaut-Lions-1,1983-Sermange-Temam,2009-Xiao-Xin-Wu,2022-Duan-Xiao-Xin}). Meanwhile, for the 2D MHD equations with partial dissipation and magnetic diffusion, it is a difficult issue whether or not local classical solutions can develop finite-time singularities \cite{2011-Cao-Wu}. On the other hand, the regularity criteria for weak solutions to the MHD equations have been studied extensively (see \cite{2005-He-xin,2005-Zhou,2007-He-Wang,2008-Chen-Miao-Zhang,2022-Pan,2023-Wang}). Among these references, the work in \cite{2005-He-xin} by He and Xin (and the work in \cite{2005-Zhou} by Zhou, independently) should be noted, in which they established two well-known Ladyzhenskaya-Prodi-Serrin (LPS, for short) regularity criteria by only the information of the fluid velocity. Concerning the small perturbation solutions in axially symmetric case, the 3D ideal resistive MHD equations was investigated by Lei \cite{2015-Leizhen} and the 3D non-homogeneous incompressible MHD equations was considered by Su et al.\cite{2017-Su-Guo-Yang}. For the 3D non-resistive viscous MHD fluids, it is evident that a strong enough magnetic field can effectively prevent the formation of singularities, and even can also inhibit flow instabilities from occurring \cite{2014-Jiangfei-Jiangsong-wangyanjin,2015-Jiangfei-Jiangsong,2018-Jiangfei-Jiangsong}. Interesting readers can refer to \cite{2018-Tanzhong-Wangyanjin} and \cite{2019-Jangfei-Songjiang} for the global stability solution and the existence of large perturbation solutions, respectively.

The hydrostatic approximation can be found both in the geophysics and in the study of the global large-scale atmospheric and oceanic dynamics (see \cite{1987-Pedlosky,1986-Washington-Parkinson,2003-Majda,2006-Vallis}). It is well-known that, owing to the fact that the planetary horizontal scale is significantly larger than the vertical scale of the ocean and atmosphere, the hydrostatic approximation can be utilized to simulate the motion of the ocean and atmosphere in the vertical direction. On basis of this fact and its high accuracy property, we apply either the scale analysis technique or take the small aspect ratio limit to the 3D anisotropic Navier-Stokes equations of incompressible flows to derive the 3D primitive equations (PEs) which is the most simplest form. Similar to the derivation of the 3D primitive equations, the 3D primitive equations coupled to the magnetic field (PEM) is obtained by considering the 3D anisotropic MHD equations of incompressible flows, which is regarded as the extension of the simplest form of the PEs. Until now, in the case from the Navier-Stokes equations to the PEs, the weak convergences were first obtained by Azérad and Guillén \cite{2001-Azérad-Guillén}, then the strong convergences with error estimates were given by Li and Titi \cite{2019-Li-Titi}, and finally the strong convergences were proved by Furukawa et al. \cite{2020-Furukawa-Giga-Hieber-Hussein-Kashiwabara-Wrona} under relaxing the regularity on the initial condition. Later, the case from the scaled Navier-Stokes equations to the PEs with only horizontal viscosity was considered by Li, Titi and Yuan \cite{2022-Li-Titi}. Moreover, the rigorous justification of the small aspect ratio limit from the scaled MHD equations to the PEM was investigated by Du and Li \cite{2023-Du-Li}.

In the 1990s, the PEs with full viscosities and full diffusivity were first introduced and systematically studies by Lions, Temam and Wang in \cite{1992-Lions-Temam-Wang-1,1992-Lions-Temam-Wang-2,1995-Lions-Temam-Wang}. They established the global existence of weak solutions to the PEs for initial data in $L^2$, while the issue of uniqueness of weak solutions is still unclear until today, even for 2D case. For initial data in $H^1$, the local existence of strong solutions was first proved by Guillén-González et al. \cite{2001-Guillén-González-Masmoudi-Rodríguez-Bellido}. In a breakthrough paper, a significant progress on the global well-posedness of strong solutions of the three-dimensional atmospheric and oceanic PEs for arbitrary large initial data in $H^1$ was made by Cao and Titi \cite{2007-Cao-Titi}, where their models include a linear equation of state and consider the Neumann boundary conditions. Successively, considering the mixed Neumann and Dirichlet case, the strong well-posedness result was obtained by Kukavica and Ziane \cite{2007-Kukavica-Ziane}. The different approaches were also utilized in \cite{2006-Kobelkov,2014-Kukavica-Pei-Rusin-Ziane}. Under the mixed Neumann and Dirichlet case, the global existence of strong solutions to the PEs without temperature was proved by Hieber and Kashiwabara \cite{2016-Hieber-Kashiwabara} for initial data in $L^p$, and subsequently by Giga et al. \cite{2020-Giga-Gries-Hieber-Hussein} for initial data belonging to the anisotropic $L^p$-spaces. Moreover, the global well-posedness result corresponding to the PEs without temperature can be further generalized to the oceanic PEs, see Hieber et al. \cite{2016-Hieber-Hussein-Kashiwabara}. Recently, Korn \cite{2021-PeterKorn} considered two different boundary conditions and a nonlinear equation of state, and established the existence and uniqueness of global strong solutions of the oceanic PEs. For more results on the PEs coupled with atmospheric dynamics, one can refer to \cite{2006-Guo-Huang,2013-Zelati-Frémond-Temam-Tribbia,2017-Hittmeir-Klein-Li-Titi,2020-Hittmeir-Klein-Li-Titi}. On the other hand, concerning the subject of the partial anisotropic viscosity and diffusivity, the global well-posedness of strong solutions to the PEs was established by Cao, Li and Titi in \cite{2014-Cao-Li-Titi-Global,2014-Cao-Li-Titi-Local-Global,2016-Cao-Li-Titi-Global,2017-Cao-Li-Titi-Global,2020-Cao-Li-Titi-Global}. For the inviscid PEs (or hydrostatic Euler equations), the short time existence of its solutions has been proved by Brenier et al.\cite{1999-Brenier,2011-Kukavica-Temam-Vicol-Ziane,2012-Masmoudi-Wong} in the analytic function space and in $H^s$ space, and for the inviscid 3D PEs in the absence of rotation, the finite-time blowup has been constructed in \cite{2014-Wong,2015-Cao-Ibrahim-Nakanishi-Titi}.

When $\alpha = 2$, the limiting system of the scaled MHD equations is the primitive equations with magnetic field,  which has been studied in \cite{2023-Du-Li}. However, when $\alpha > 2$, the limiting system of the scaled horizontal viscous MHD (SHMHD) equations is the primitive equations with only horizontal viscosity and magnetic diffusivity (PEHM). Motivated by the excellent work of 
Li, Titi and Yuan \cite{2022-Li-Titi},
the main purpose of this paper is to prove that the SHMHD equations strongly converge to the PEHM, in the cases of $H^1$-initial data, $H^1$-initial data with additional regularity and $H^2$-initial data, respectively, as the aspect ratio parameter goes to zero. Our convergence results are briefly stated as follows.


(i) In the case of ${H^1}$-initial data, there exists a unique local-in-time strong solution $(\tilde A,\tilde B)$ of the PEHM subject to conditions \eqref{2.4}-\eqref{2.6} below (see \cite{2017-Cao-Li-Titi-Global}). According to Proposition  \ref{prop3.2} and Remark \ref{rem2.2} below, we obtain the local-in-time strong convergence theorem (see Theorem \ref{thm2.3} below).

(ii) In the case of ${H^1}$-initial data with additional regularity $({\partial _z}{\tilde A_0},{\partial _z}{\tilde B_0}) \in {L^p}(\Omega )$ for some $p \in (2,\infty )$, the global existence of strong solutions of the PEHM subject to conditions \eqref{2.4}-\eqref{2.6} is proved (see Theorem \ref{thm2.4} below). Note that compared with the global well-posed result in \cite{2017-Cao-Li-Titi-Global}, we do not require the condition $({{\tilde A}_0},{{\tilde B}_0}) \in {L^\infty }(\Omega )$. As a result, combining this improved result and Remark \ref{rem2.2}, we establish the corresponding global-in-time strong convergence Theorem (see Theorem \ref{thm2.6} below).

(iii) In the case of ${H^2}$-initial data, the PEHM subject to conditions \eqref{2.4}-\eqref{2.6} admits a unique global-in-time strong solution $(\tilde A,\tilde B)$ (see \cite{2016-Cao-Li-Titi-Global}). On the basis of Proposition 4.1 and Remark \ref{rem2.2}, we obtain the corresponding global-in-time strong convergence Theorem (see Theorem \ref{thm2.7} below).

The structure of our paper is arranged as follows. In section 2, we derive two important mathematical models and state main results which will be proved in the subsequent sections. In section 3, we prove first the local-in-time strong convergence of Theorem \ref{thm2.3}, then establish the global well-posedness of Theorem \ref{thm2.4}, and finally prove the global-in-time strong convergence of Theorem \ref{thm2.6}, where we will impose additional regularity $({\partial _z}{\tilde A_0},{\partial _z}{\tilde B_0}) \in {L^p}(\Omega )$ with $p \in (2,\infty )$ on the initial data $({{\tilde A}_0},{{\tilde B}_0}) \in {H^1}(\Omega )$. Finally, in section 4, we show the global-in-time strong convergence of Theorem \ref{thm2.7}, where we will consider the initial data $({{\tilde A}_0},{{\tilde B}_0}) \in {H^2}(\Omega )$.

\section{Mathematical models and main results }
\subsection{The derivation of the scaled horizontal viscous MHD equations (SHMHD)}
Consider a bounded domain $M: = (0,{L_1}) \times (0,{L_2}) \subseteq {R^2}$, where ${L_1}$ and ${L_2}$ are both positive constants of order $O(1)$ about $\varepsilon$. Let ${\Omega _\varepsilon }: = M \times ( - \varepsilon ,\varepsilon ) \subseteq {R^3}$ be a thin domain depending on $\varepsilon$, where $\varepsilon = H/L > 0$ is a small enough parameter. The anisotropic incompressible MHD equations in ${\Omega _\varepsilon }$ are listed as
\begin{equation} \label{2.1}
 \begin{cases}
  \begin{aligned}
                   &{\partial _t}u + (u \cdot \nabla )u + \nabla p - \mu {\Delta _H}u - \nu {\partial _{zz}}u = (b \cdot \nabla )b,\\
                   &{\partial _t}b + (u \cdot \nabla )b - k{\Delta _H}b - \sigma {\partial _{zz}}b = (b \cdot \nabla )u,\\
                   &\nabla  \cdot u = 0,\;\;\nabla  \cdot b = 0,
  \end{aligned}
 \end{cases}
\end{equation}
where the velocity field $u = ({\tilde u},{u_3})$, the magnetic field $b = ({\tilde b},{b_3})$ and the scalar pressure function $p$ are all the unknowns. Here the notations ${\nabla _H} = ({\partial _x},{\partial _y})$ and ${\Delta _H} = \partial _x^2 + \partial _y^2$ denote the horizontal gradient and the horizontal Laplacian, respectively; the notation $\nabla = ({\partial _x},{\partial _y},{\partial _z})$ denotes the three-dimensional gradient operator, $\Delta = \partial _x^2 + \partial _y^2 + \partial _z^2$ and $\nabla \cdot $ the three-dimensional Laplacian and the three-dimensional divergence operator; the vector fields ${\tilde u} = ({u_1},{u_2})$  and ${\tilde b} = ({b_1},{b_2})$ denote the horizontal velocity and the horizontal magnetic field, respectively, while ${u_3}$ and ${b_3}$ the vertical one.

Let us introduce the following scaling transformation
\begin{equation*}
      \begin{aligned}
               &{{\tilde u}_\varepsilon }(x,y,z,t) = \tilde u(x,y,\varepsilon z,t) = \left( {{u_1}(x,y,\varepsilon z,t),{u_2}(x,y,\varepsilon z,t)} \right),\\
               &{{\tilde b}_\varepsilon }(x,y,z,t) = \tilde b(x,y,\varepsilon z,t) = \left( {{b_1}(x,y,\varepsilon z,t),{b_2}(x,y,\varepsilon z,t)} \right),\\
               &{u_{3,\varepsilon }}(x,y,z,t) = \frac{1}{\varepsilon }{u_3}(x,y,\varepsilon z,t),\;{b_{3,\varepsilon }}(x,y,z,t) = \frac{1}{\varepsilon }{b_3}(x,y,\varepsilon z,t),\\
               &{p_\varepsilon }(x,y,z,t) = p(x,y,\varepsilon z,t),\;{u_\varepsilon } = ({{\tilde u}_\varepsilon },{u_{3,\varepsilon }}),\;{b_\varepsilon } = ({{\tilde b}_\varepsilon },{b_{3,\varepsilon }}),\;\forall (x,y,z) \in M \times ( - 1,1),
  \end{aligned}
\end{equation*}
and assume that $\mu = 1$, $\nu = {\varepsilon ^\alpha }$, $k = 1$, $\sigma = {\varepsilon ^\alpha }$, respectively. Then system \eqref{2.1} can be rewritten as the scaled horizontal viscous (or diffusive) MHD (SHMHD) equations
\begin{equation} \label{2.2}
 \begin{cases}
 \begin{aligned}
               &{\partial _t}{{\tilde u}_\varepsilon } + ({u_\varepsilon } \cdot \nabla ){{\tilde u}_\varepsilon } - ({b_\varepsilon } \cdot \nabla ){{\tilde b}_\varepsilon } + {\nabla _H}{p_\varepsilon } - {\Delta _H}{{\tilde u}_\varepsilon } - {\varepsilon ^{\alpha  - 2}}{\partial _{zz}}{{\tilde u}_\varepsilon } = 0,\\
               &{\varepsilon ^2}\left( {{\partial _t}{u_{3,\varepsilon }} + {u_\varepsilon } \cdot \nabla {u_{3,\varepsilon }} - {b_\varepsilon } \cdot \nabla {b_{3,\varepsilon }} - {\Delta _H}{u_{3,\varepsilon }} - {\varepsilon ^{\alpha  - 2}}{\partial _{zz}}{u_{3,\varepsilon }}} \right) + {\partial _z}{p_\varepsilon } = 0,\\
               &{\partial _t}{{\tilde b}_\varepsilon } + ({u_\varepsilon } \cdot \nabla ){{\tilde b}_\varepsilon } - ({b_\varepsilon } \cdot \nabla ){{\tilde u}_\varepsilon } - {\Delta _H}{{\tilde b}_\varepsilon } - {\varepsilon ^{\alpha  - 2}}{\partial _{zz}}{{\tilde b}_\varepsilon } = 0,\\
               &{\varepsilon ^2}\left( {{\partial _t}{b_{3,\varepsilon }} + {u_\varepsilon } \cdot \nabla {b_{3,\varepsilon }} - {b_\varepsilon } \cdot \nabla {u_{3,\varepsilon }} - {\Delta _H}{b_{3,\varepsilon }} - {\varepsilon ^{\alpha  - 2}}{\partial _{zz}}{b_{3,\varepsilon }}} \right) = 0,\\
               &\nabla \cdot {u_\varepsilon } = 0,\;\;\nabla  \cdot {b_\varepsilon } = 0,
  \end{aligned}
 \end{cases}
\end{equation}
in the fixed domain $\Omega : = M \times ( - 1,1)$, for any $t \in (0,\infty )$.

Next, in order to simplify our calculations in the following sections, we will recast the SHMHD equations in terms of new variables which we call ${A}$ and ${B}$, in such a way as to symmetrize system \eqref{2.2}. The Elsässer variables are defined as
\begin{align*}
            {{A}_\varepsilon } =:& ({{\tilde A}_\varepsilon },{A_{3,\varepsilon }}) = ({{\tilde u}_\varepsilon } + {{\tilde b}_\varepsilon },{u_{3,\varepsilon }} + {b_{3,\varepsilon }}),\\
            {{B}_\varepsilon } =:& ({{\tilde B}_\varepsilon },{B_{3,\varepsilon }}) = ({{\tilde u}_\varepsilon } - {{\tilde b}_\varepsilon },{u_{3,\varepsilon }} - {b_{3,\varepsilon }}),
\end{align*}
and therefore the new formulation for system \eqref{2.2} in the fixed domain $\Omega$ can be written as
\begin{equation} \label{2.3}
 \begin{cases}
  \begin{aligned}
              &{\partial _t}{{\tilde A}_\varepsilon } + ({B_\varepsilon } \cdot \nabla ){{\tilde A}_\varepsilon } + {\nabla _H}{p_\varepsilon } - {\Delta _H}{{\tilde A}_\varepsilon } - {\varepsilon ^{\alpha  - 2}}{\partial _{zz}}{{\tilde A}_\varepsilon } = 0,\\
              &{\varepsilon ^2}\left( {{\partial _t}{A_{3,\varepsilon }} + {B_\varepsilon } \cdot \nabla {A_{3,\varepsilon }} - {\Delta _H}{A_{3,\varepsilon }} - {\varepsilon ^{\alpha  - 2}}{\partial _{zz}}{A_{3,\varepsilon }}} \right) + {\partial _z}{p_\varepsilon } = 0,\\
              &{\partial _t}{{\tilde B}_\varepsilon } + ({A_\varepsilon } \cdot \nabla ){{\tilde B}_\varepsilon } + {\nabla _H}{p_\varepsilon } - {\Delta _H}{{\tilde B}_\varepsilon } - {\varepsilon ^{\alpha  - 2}}{\partial _{zz}}{{\tilde B}_\varepsilon } = 0,\\
              &{\varepsilon ^2}\left( {{\partial _t}{B_{3,\varepsilon }} + {A_\varepsilon } \cdot \nabla {B_{3,\varepsilon }} - {\Delta _H}{B_{3,\varepsilon }} - {\varepsilon ^{\alpha  - 2}}{\partial _{zz}}{B_{3,\varepsilon }}} \right) + {\partial _z}{p_\varepsilon } = 0,\\
              &{\nabla _H} \cdot {{\tilde A}_\varepsilon } + {\partial _z}{A_{3,\varepsilon }} = 0,\;\;{\nabla _H} \cdot {{\tilde B}_\varepsilon } + {\partial _z}{B_{3,\varepsilon }} = 0.
  \end{aligned}
 \end{cases}
\end{equation}
For system \eqref{2.3}, we consider the periodic initial-boundary value problem with the periodic boundary
\begin{equation} \label{2.4}
\begin{aligned}
                  &{{\tilde A}_\varepsilon },\;{A_{3,\varepsilon} },\;{{\tilde B}_\varepsilon },\;{B_{3,\varepsilon} }\; {\rm{and}} \;{p_{\varepsilon} }\;{\rm{are\;all\;periodic\;functions\;in}}\;x\;{\rm{with\;period}}\;{L_1},\\
                  &{{\tilde A}_\varepsilon },\;{A_{3,\varepsilon} },\;{{\tilde B}_\varepsilon },\;{B_{3,\varepsilon} }\; {\rm{and}} \;{p_{\varepsilon} }\;{\rm{are\;all\;periodic\;functions\;in}}\;y\;{\rm{with\;period}}\;{L_2},\\
                  &{{\tilde A}_\varepsilon },\;{A_{3,\varepsilon} },\;{{\tilde B}_\varepsilon },\;{B_{3,\varepsilon} }\; {\rm{and}} \;{p_{\varepsilon} }\;{\rm{are\;all\;periodic\;functions\;in}}\;z\;{\rm{with\;period}}\;2,
\end{aligned}
\end{equation}
and the initial condition
\begin{equation} \label{2.5}
              { {( {{{\tilde A}_\varepsilon },{A_{3,\varepsilon }}} )} \vert_{t = 0}} = ( {{\tilde A_{\varepsilon ,0}},{A_{3,\varepsilon ,0}}})\;{\rm{and}}\;{ {( {{{\tilde B}_\varepsilon },{B_{3,\varepsilon }}})} \vert_{t = 0}} = ( {{{\tilde B}_{\varepsilon ,0}},{B_{3,\varepsilon ,0}}}),
\end{equation}
and we also take into account the following symmetric condition
\begin{equation} \label{2.6}
\begin{aligned}
              &{{\tilde A}_\varepsilon },\;{A_{3,\varepsilon} }\;{\rm{and}}\;{p_\varepsilon }\;{\rm{are\;even,\;odd\;and\; even\;in}}\;z,\;{\rm{respectively}},\\
              &{{\tilde B}_\varepsilon },\;{B_{3,\varepsilon} }\;{\rm{are\;even\;and\;odd\;in}}\;z,\;{\rm{respectively}}.
\end{aligned}
\end{equation}
It should be noted that the dynamics of the SHMHD equations inherits the above symmetry condition, which means that as long as we impose the symmetric condition on the initial velocity and magnetic field, the solution $({{\tilde A}_\varepsilon },{A_{3,\varepsilon} },{{\tilde B}_\varepsilon },{B_{3,\varepsilon} },{p_{\varepsilon} })$ of system \eqref{2.3} will fulfill \eqref{2.6} automatically. Owing to this fact, throughout our paper, we always suppose that
\begin{equation} \label{2.7}
\begin{aligned}
        &{{\tilde A}_0},\;{A_{3,0}},{{\tilde B}_0}\;{\rm{and}}\;{B_{3,0}}\;{\rm{are}}\;{\rm{all}}\;{\rm{periodic}}\;{\rm{in}}\;x,\;y,\;z,\\
        &{{\tilde A}_0},\;{A_{3,0}}\;{\rm{are\;even\;and\;odd\;in}}\;z,\;{\rm{respectively}},\\
        &{{\tilde B}_0},\;{B_{3,0}}\;{\rm{are\;even\;and\;odd\;in}}\;z,\;{\rm{respectively}}.
\end{aligned}
\end{equation}

The weak solutions of the SHMHD equations are defined as follows.
\begin{definition}\label{def2.1}
(Leray-Hopf weak solution) Assume that $({A_0},{B_0}) = ({{\tilde A}_0},{A_{3,0}},{{\tilde B}_0},{B_{3,0}}) \in L_\sigma ^2(\Omega )$ with ${\nabla \cdot {A_0}} = 0$ and ${\nabla \cdot {B_0}} = 0$, satisfies the symmetric condition \eqref{2.7}. Then a quadruple of functions $({{\tilde A}_\varepsilon },{A_{3,\varepsilon} },{{\tilde B}_\varepsilon },{B_{3,\varepsilon} })$ will be called a Leray-Hopf weak solution to system \eqref{2.3}, subject to the periodic boundary and initial conditions \eqref{2.4}-\eqref{2.5} and symmetry condition \eqref{2.6}, if

(i) $(A_\varepsilon,B_\varepsilon) = ({{\tilde A}_\varepsilon },{A_{3,\varepsilon} },{{\tilde B}_\varepsilon },{B_{3,\varepsilon} })$ has the following regularity properties
\begin{equation*}
            ({{\tilde A}_\varepsilon },{A_{3,\varepsilon} },{{\tilde B}_\varepsilon },{B_{3,\varepsilon} }) \in {C_w}([0,\infty );L_\sigma ^2(\Omega )) \cap L_{loc}^2([0,\infty );{H^1}(\Omega )),
\end{equation*}
where ${C_w}$ means weakly continuous;

(ii) $({{\tilde A}_\varepsilon },{A_{3,\varepsilon} },{{\tilde B}_\varepsilon },{B_{3,\varepsilon} })$ is spatially periodic and satisfies the symmetric condition \eqref{2.6};

(iii) for any $\bm{\varphi} = ({\tilde{\varphi}},{\varphi _3}) \in C_c^\infty (\bar \Omega \times [0,\infty ))$ and $\bm{\phi} =({\tilde{\phi}},{\phi _3}) \in C_c^\infty (\bar \Omega \times [0,\infty ))$, the integral identity
\begin{equation*}
  \begin{aligned}
                &\int_0^\infty  {\int_\Omega  {\left[ { - ({{\tilde A}_\varepsilon } \cdot {\partial _t}\tilde \varphi  + {\varepsilon ^2}{A_{3,\varepsilon }}{\partial _t}{\varphi _3}) + ({B_\varepsilon } \cdot \nabla ){{\tilde A}_\varepsilon } \cdot \tilde \varphi  + {\varepsilon ^2}({B_\varepsilon } \cdot \nabla {A_{3,\varepsilon }}){\varphi _3}} \right.} } \\
                &+ \left. {\nabla {{\tilde A}_\varepsilon }:{\nabla _H}\tilde \varphi  + {\varepsilon ^{\alpha  - 2}}{\partial _z}{{\tilde A}_\varepsilon } \cdot {\partial _z}\tilde \varphi  + {\varepsilon ^2}{\nabla _H}{A_{3,\varepsilon }} \cdot {\nabla _H}{\varphi _3} + {\varepsilon ^\alpha }{\partial _z}{A_{3,\varepsilon }}{\partial _z}{\varphi _3}} \right]d\Omega dt\\
                &+ \int_0^\infty  {\int_\Omega  {\left[ { - ({{\tilde B}_\varepsilon } \cdot {\partial _t}\tilde \phi  + {\varepsilon ^2}{B_{3,\varepsilon }}{\partial _t}{\phi _3}) + ({A_\varepsilon } \cdot \nabla ){{\tilde B}_\varepsilon } \cdot \tilde \phi  + {\varepsilon ^2}({A_\varepsilon } \cdot \nabla {B_{3,\varepsilon }}){\phi _3}} \right.} } \\
                &+ \left. {\nabla {{\tilde B}_\varepsilon }:{\nabla _H}\tilde \phi  + {\varepsilon ^{\alpha  - 2}}{\partial _z}{{\tilde B}_\varepsilon } \cdot {\partial _z}\tilde \phi  + {\varepsilon ^2}{\nabla _H}{B_{3,\varepsilon }} \cdot {\nabla _H}{\phi _3} + {\varepsilon ^\alpha }{\partial _z}{B_{3,\varepsilon }}{\partial _z}{\phi _3}} \right]d\Omega dt\\
                =& \int_\Omega  {\left( {{{\tilde A}_0} \cdot \tilde \varphi ( \cdot ,0) + {\varepsilon ^2}{A_{3,0}}{\varphi _3}( \cdot ,0)} \right)d\Omega }  + \int_\Omega  {\left( {{{\tilde B}_0} \cdot \tilde \phi ( \cdot ,0) + {\varepsilon ^2}{B_{3,0}}{\phi _3}( \cdot ,0)} \right)d\Omega }
   \end{aligned}
\end{equation*}
holds, where the spatially periodic functions $\bm{\varphi}$ and $\bm{\phi}$ satisfy the incompressible conditions ${\nabla \cdot \bm{\varphi}} = 0$ and ${\nabla \cdot \bm{\phi}} = 0$, respectively, and the symmetric condition \eqref{2.6}, and ${\tilde{\varphi}} = ({\varphi _1},{\varphi _2})$ and ${\tilde{\phi}} = ({\phi _1},{\phi _2})$. Here we write $d\Omega = dxdydz$.
\end{definition}

\begin{remark}\label{rem2.2}
(i) Following the similar strategy of Duvaut and Lions \cite{1972-Duvaut-Lions-1} (see also \cite{1983-Sermange-Temam}), the proof of the existence of weak (strong) solutions to SHMHD equations \eqref{2.3} can be obtained. More precisely, when the initial data $({{\tilde A}_0},{A_{3,0}},{{\tilde B}_0},{B_{3,0}}) \in L_\sigma ^2(\Omega )$, with $\nabla \cdot {A_0} = 0$ and $\nabla \cdot {B_0} = 0$, we can prove that there exists a global-in-time weak solution $({{\tilde A}_\varepsilon },{A_{3,\varepsilon} },{{\tilde B}_\varepsilon },{B_{3,\varepsilon} })$ to system \eqref{2.3}, supplemented with the periodic boundary and initial conditions \eqref{2.4}-\eqref{2.5} and symmetric condition \eqref{2.6}. On the other hand, when the initial data $({{\tilde A}_0},{A_{3,0}},{{\tilde B}_0},{B_{3,0}}) \in H^1(\Omega )$, with $\nabla \cdot {A_0} = 0$ and $\nabla \cdot {B_0} = 0$, we can also prove that it possesses a unique local-in-time strong solution.

(ii) Applying some classical theories on the three-dimensional Navier-Stokes equations (see Ch.III, Remark 4.1 in \cite{1979-Temam}; Theorem 4.6 in \cite{2016-Robinson-Rodrigo-Sadowski}) to the three-dimensional MHD equations, we can prove that there exists a weak solution $({{\tilde A}_\varepsilon },{A_{3,\varepsilon} },{{\tilde B}_\varepsilon },{B_{3,\varepsilon} })$ of SHMHD equations \eqref{2.3} by Galerkin method such that the following energy inequality holds:
\begin{equation} \label{2.8}
\begin{aligned}
                &\Vert {{{\tilde A}_\varepsilon }(t)} \Vert_2^2 + \Vert {{{\tilde B}_\varepsilon }(t)} \Vert_2^2 + {\varepsilon ^2}\Vert {{A_{3,\varepsilon }}(t)} \Vert_2^2 + {\varepsilon ^2}\Vert {{B_{3,\varepsilon }}(t)} \Vert_2^2\\
                &+ 2\int_0^{t} {\left( {\Vert {{\nabla _H}{{\tilde A}_\varepsilon }} \Vert_2^2 + {\varepsilon ^{\alpha  - 2}}\Vert {{\partial _z}{{\tilde A}_\varepsilon }} \Vert_2^2 + \Vert {{\nabla _H}{{\tilde B}_\varepsilon }} \Vert_2^2 + {\varepsilon ^{\alpha  - 2}}\Vert {{\partial _z}{{\tilde B}_\varepsilon }} \Vert_2^2} \right)ds} \\
                &+ 2\int_0^{t} {\left( {{\varepsilon ^2}\Vert {{\nabla _H}{A_{3,\varepsilon }}} \Vert_2^2 + {\varepsilon ^\alpha }\Vert {{\partial _z}{A_{3,\varepsilon }}} \Vert_2^2 + {\varepsilon ^2}\Vert {{\nabla _H}{B_{3,\varepsilon }}} \Vert_2^2 + {\varepsilon ^\alpha }\Vert {{\partial _z}{B_{3,\varepsilon }}} \Vert_2^2} \right)ds}\\
                \le& \Vert {{{\tilde A}_0}} \Vert_2^2 + \Vert {{{\tilde B}_0}} \Vert_2^2 + {\varepsilon ^2}\Vert {{A_{3,0}}} \Vert + {\varepsilon ^2}\Vert {{B_{3,0}}} \Vert_2^2,
\end{aligned}
\end{equation}
for a.e. $t \in [0,\infty )$.
\end{remark}

\subsection{Primitive equations with only horizontal viscosity and magnetic diffusivity (PEHM)}
In this subsection, we discuss the full diffusivity case and partial diffusivity case, respectively. In the case $\alpha = 2$, by setting $\mu = k = 1$, $\nu = \sigma ={\varepsilon ^2}$, and taking the limit as $\varepsilon \to 0$ in system \eqref{2.2}, we arrive at the following primitive equations with magnetic field (PEM, for short) (more details, see \cite{2023-Du-Li}):
\begin{equation} \label{2.9}
 \begin{cases}
  \begin{aligned}
                &{\partial _t}\tilde u + (u \cdot \nabla )\tilde u - {\Delta _H}\tilde u - {\partial _{zz}}\tilde u + {\nabla _H}p = (b \cdot \nabla )\tilde b,\\
                &{\partial _z}p = 0,\\
                &{\partial _t}\tilde b + (u \cdot \nabla )\tilde b - {\Delta _H}\tilde b - {\partial _{zz}}\tilde b = (b \cdot \nabla )\tilde u,\\
                &{\nabla _H} \cdot \tilde u + {\partial _z}{u_3} = 0,\\
                &{\nabla _H} \cdot \tilde b + {\partial _z}{b_3} = 0.
  \end{aligned}
 \end{cases}
\end{equation}
In the case $2 < \alpha < \infty$, by setting $\mu = k = 1$, $\nu = \sigma = {\varepsilon ^\alpha }$, and taking the limit as $\varepsilon \to 0$ in system \eqref{2.3}, it is straightforward to derive the following primitive equations with only horizontal viscosity and magnetic diffusivity (PEHM, for short)
\begin{equation} \label{2.10}
 \begin{cases}
  \begin{aligned}
                &{\partial _t}\tilde A + (B \cdot \nabla )\tilde A - {\Delta _H}\tilde A + {\nabla _H}p = 0,\\
                &{\partial _z}p = 0,\\
                &{\partial _t}\tilde B + (A \cdot \nabla )\tilde B - {\Delta _H}\tilde B + {\nabla _H}p = 0,\\
                &{\nabla _H} \cdot \tilde A + {\partial _z}{A_3} = 0,\\
                &{\nabla _H} \cdot \tilde B + {\partial _z}{B_3} = 0.
  \end{aligned}
 \end{cases}
\end{equation}
Compared \eqref{2.9} with \eqref{2.10}, it is clear that they are both systems that the primitive equations are coupled to the diffusion equations for the magnetic field, which is the extension of the simplest form of the PEs. In addition, we observe that the dissipation of system \eqref{2.9} in all directions does not vanish  when the case $\alpha = 2$, while the dissipation of system \eqref{2.10} in the vertical directions vanishes when the case $2 < \alpha < \infty$.

As in SHMHD equations \eqref{2.3}, we also need to consider the periodic initial-boundary value problem of system \eqref{2.10}, so we should impose the same boundary and initial conditions \eqref{2.4}-\eqref{2.5} and symmetric condition \eqref{2.6} on system \eqref{2.10}. In the study of the global (local) well-posedness of system \eqref{2.10}, we see that it is not necessary to consider the initial conditions for vertical velocity ${A_{3}}$ and magnetic field ${B_{3}}$. Thus, for convenience, we say that system \eqref{2.10} satisfies the initial conditions \eqref{2.5}. We also note that, by \eqref{2.7}, one deduces
\begin{equation} \label{2.11}
          {A_{3,0}}(x,y,0) = {B_{3,0}}(x,y,0) = 0,
\end{equation}
and then ${A_{3,0}}$ and ${B_{3,0}}$ can be uniquely determined by the incompressible constraints as
\begin{equation} \label{2.12}
             {A_{3,0}}(x,y,z) = - \int_0^z {{\nabla _H} \cdot {{\tilde A}_0}(x,y,\xi )d\xi } ,\;{\rm{for}}\;\forall (x,y,z) \in \Omega ,
\end{equation}
and
\begin{equation} \label{2.13}
            {B_{3,0}}(x,y,z) = - \int_0^z {{\nabla _H} \cdot {{\tilde B}_0}(x,y,\xi )d\xi } ,\;{\rm{for}}\;\forall (x,y,z) \in \Omega,
\end{equation}
respectively. Analogously, ${A_{3}}$ and ${B_{3}}$ can be uniquely determined by the incompressible properties as
\begin{equation} \label{2.14}
               {A_3}(x,y,z,t) = - \int_0^z {{\nabla _H} \cdot \tilde A(x,y,\xi ,t)d\xi } ,\;{\rm{for}}\;\forall (x,y,z) \in \Omega,
\end{equation}
and
\begin{equation} \label{2.15}
                {B_3}(x,y,z,t) = - \int_0^z {{\nabla _H} \cdot \tilde B(x,y,\xi ,t)d\xi } ,\;{\rm{for}}\;\forall (x,y,z) \in \Omega,
\end{equation}
respectively. In order to obtain the solutions of system \eqref{2.10}, throughout our paper, we only need to solve the horizontal components ${\tilde{A}}$ and ${\tilde{B}}$, and then the vertical components $A_{3}$ and $B_{3}$ are uniquely determined by \eqref{2.14} and \eqref{2.15}, respectively.

\subsection{Main results}
In this subsection, our main results are concerning the local-in-time and global-in-time strong convergences from the SHMHD equations to the PEHM and the global well-posedness of the PEHM for the initial data $({{\tilde A}_0},{{\tilde B}_0}) \in {H^1}(\Omega )$ with additional regularity conditions, as the aspect ratio parameter $\varepsilon $ tends to zero, as stated in the following.

\begin{theorem}\label{thm2.3}
Let $({{\tilde A}_0},{{\tilde B}_0}) \in {H^1}(\Omega )$ be a pair of periodic function fulfilling
\begin{equation*}
                {\nabla _H} \cdot \left( {\int_{ - 1}^1 {{{\tilde A}_0}(x,y,z)dz} } \right) = 0\;and\;{\nabla _H} \cdot \left( {\int_{ - 1}^1 {{{\tilde B}_0}(x,y,z)dz} } \right) = 0
\end{equation*}
on $M$, and satisfy the symmetry condition \eqref{2.7}, and let ${A_{3,0}}$ and ${B_{3,0}}$ be uniquely determined by \eqref{2.12} and \eqref{2.13}, respectively. Assume that $({{\tilde A}_\varepsilon },{A_{3,\varepsilon }},{{\tilde B}_\varepsilon },{B_{3,\varepsilon }})$ is an arbitrary Leray-Hopf weak solution to system \eqref{2.3}, satisfying the energy inequality \eqref{2.8}, and that $(\tilde A,{A_3},\tilde B,{B_3})$ is the unique local-in-time strong solution to system \eqref{2.10}, supplemented with the same periodic boundary and initial conditions \eqref{2.4}-\eqref{2.5} and symmetry condition \eqref{2.6}. Let ${t_1^ * }$ be the existence time of $(\tilde A,{A_3},\tilde B,{B_3})$ and set
\begin{equation*}
                ({\tilde U_\varepsilon },{U_3},{\tilde V_\varepsilon },{V_3}) = ({\tilde A_\varepsilon } - \tilde A,{A_{3,\varepsilon }} - {A_3},{\tilde B_\varepsilon } - \tilde B,{B_{3,\varepsilon }} - {B_3}).
\end{equation*}
Then, the following estimate holds for any $\varepsilon \in (0,\infty )$, that is,
\begin{equation*}
\begin{aligned}
              \mathop {\sup }\limits_{0 \le t < t_1^ * } &\left( {\Vert {({{\tilde U}_\varepsilon },\varepsilon {U_3},{{\tilde V}_\varepsilon },\varepsilon {V_3})} \Vert_2^2} \right)(t)\\
              &+ \int_0^{t_1^ * } {\left( {\Vert {{\nabla _H}{{\tilde U}_\varepsilon }} \Vert_2^2 + {\varepsilon ^{\alpha  - 2}}\Vert {{\partial _z}{{\tilde U}_\varepsilon }} \Vert_2^2 + \Vert {{\nabla _H}{{\tilde V}_\varepsilon }} \Vert_2^2 + {\varepsilon ^{\alpha  - 2}}\Vert {{\partial _z}{{\tilde V}_\varepsilon }} \Vert_2^2} \right)dt} \\
              &+ \int_0^{t_1^ * } {\left( {{\varepsilon ^2}\Vert {{\nabla _H}{U_3}} \Vert_2^2 + {\varepsilon ^\alpha }\Vert {{\partial _z}{U_3}} \Vert_2^2 + {\varepsilon ^2}\Vert {{\nabla _H}{V_3}} \Vert_2^2 + {\varepsilon ^\alpha }\Vert {{\partial _z}{V_3}} \Vert_2^2} \right)dt} \\
              \le C{\varepsilon ^\gamma }&(t_1^ *  + 1){e^{C(t_1^ *  + 1)}}\left[ {1 + {\left( {\Vert {{{\tilde A}_0}} \Vert_2^2 + \Vert {{A_{3,0}}} \Vert_2^2 + \Vert {{{\tilde B}_0}} \Vert_2^2 + \Vert {{B_{3,0}}} \Vert_2^2} \right)^2}} \right],
\end{aligned}
\end{equation*}
where $\gamma = \min \{ 2,\alpha - 2\}$ with $\alpha \in (2,\infty )$, and $C$ is a positive constant independent of $\varepsilon$. Therefore, we have the following local-in-time strong convergences
\begin{equation*}
    \begin{aligned}
              &({{\tilde A}_\varepsilon },\varepsilon {A_{3,\varepsilon }},{{\tilde B}_\varepsilon },\varepsilon {B_{3,\varepsilon }}) \to (\tilde A,0,\tilde B,0)\;\;{\rm{in}}\;{L^\infty }([0,t_1^*);{L^2}(\Omega )),\\
              &({\nabla _H}{{\tilde A}_\varepsilon },{\varepsilon ^{(\alpha  - 2)/2}}{\partial _z}{{\tilde A}_\varepsilon },\varepsilon {\nabla _H}{A_{3,\varepsilon }},{\varepsilon ^{\alpha /2}}{\partial _z}{A_{3,\varepsilon }},{A_{3,\varepsilon }}) \to ({\nabla _H}\tilde A,0,0,0,{A_3})\;\;{\rm{in}}\;{L^2}([0,t_1^*);{L^2}(\Omega )),\\
              &({\nabla _H}{{\tilde B}_\varepsilon },{\varepsilon ^{(\alpha  - 2)/2}}{\partial _z}{{\tilde B}_\varepsilon },\varepsilon {\nabla _H}{B_{3,\varepsilon }},{\varepsilon ^{\alpha /2}}{\partial _z}{B_{3,\varepsilon }},{B_{3,\varepsilon }}) \to ({\nabla _H}\tilde B,0,0,0,{B_3})\;\;{\rm{in}}\;{L^2}([0,t_1^*);{L^2}(\Omega )),
    \end{aligned}
\end{equation*}
and the rate of the convergence is of the order $O({\varepsilon ^{\gamma /2}})$.
\end{theorem}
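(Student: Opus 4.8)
The plan is to run a weak--strong (relative energy) argument in the anisotropically weighted norm $\Vert(\tilde U_\varepsilon,\varepsilon U_3,\tilde V_\varepsilon,\varepsilon V_3)\Vert_2^2$. First I would regard the local strong solution $(\tilde A,A_3,\tilde B,B_3)$ of the PEHM \eqref{2.10} as an approximate solution of the SHMHD system \eqref{2.3}: inserting it into \eqref{2.3} and using \eqref{2.10} with the hydrostatic balance $\partial_z p=0$, the horizontal equations acquire the single residual $R_1=-\varepsilon^{\alpha-2}\partial_{zz}\tilde A$ (resp. $-\varepsilon^{\alpha-2}\partial_{zz}\tilde B$), while the vertical equations acquire $R_2=\varepsilon^2(\partial_t A_3+B\cdot\nabla A_3-\Delta_H A_3-\varepsilon^{\alpha-2}\partial_{zz}A_3)$ (resp. the $B_3$ analogue). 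Subtracting these padded equations from \eqref{2.3} yields the evolution equations for the differences $(\tilde U_\varepsilon,U_3,\tilde V_\varepsilon,V_3)$, forced precisely by these $O(\varepsilon^{\alpha-2})$ and $O(\varepsilon^2)$ terms, with the transport part split as $(B_\varepsilon\cdot\nabla)\tilde A_\varepsilon-(B\cdot\nabla)\tilde A=(B_\varepsilon\cdot\nabla)\tilde U_\varepsilon+(V\cdot\nabla)\tilde A$, where $V=(\tilde V_\varepsilon,V_3)=B_\varepsilon-B$, and symmetrically $U=(\tilde U_\varepsilon,U_3)=A_\varepsilon-A$ for the $B$-block.

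Since $(\tilde A_\varepsilon,\dots,B_{3,\varepsilon})$ is only a Leray--Hopf weak solution, I cannot test the difference equations against $(\tilde U_\varepsilon,U_3,\tilde V_\varepsilon,V_3)$ directly. Instead I would assemble the weighted relative energy from three pieces: the energy inequality \eqref{2.8} of Remark \ref{rem2.2} for the weak solution, the energy equality for the smooth strong solution of \eqref{2.10} (which carries the residual contributions $\int R_1\cdot\tilde A$, $\int R_2\,A_3/\varepsilon^2$, etc.), and the cross terms $\langle\tilde A_\varepsilon,\tilde A\rangle$, $\varepsilon^2\langle A_{3,\varepsilon},A_3\rangle$, and their $B$-analogues, whose time derivatives I compute by inserting the regular solution $(\tilde A,A_3,\tilde B,B_3)$ as an admissible test function in the weak formulation of Definition \ref{def2.1} (legitimate by density, using its $L^\infty_tH^1\cap L^2_tH^2$ regularity) and, conversely, pairing the weak solution against the strong equation. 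Adding the three pieces, the pressure contributions $\int\nabla_H(p_\varepsilon-p)\cdot\tilde U_\varepsilon+\int\partial_z(p_\varepsilon-p)\,U_3=-\int(p_\varepsilon-p)\,\nabla\cdot U$ cancel by $\nabla\cdot U=0$, and one obtains a differential inequality $\frac{d}{dt}Y_\varepsilon+\mathcal D_\varepsilon\le\mathcal N_\varepsilon+\mathcal R_\varepsilon$, with $Y_\varepsilon=\Vert(\tilde U_\varepsilon,\varepsilon U_3,\tilde V_\varepsilon,\varepsilon V_3)\Vert_2^2$ and $\mathcal D_\varepsilon$ the full dissipation displayed on the left of the claimed estimate.

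The heart of the argument is the control of $\mathcal N_\varepsilon$, and I expect it to be the main obstacle. The self-transport terms $\int_\Omega(B_\varepsilon\cdot\nabla)\tilde U_\varepsilon\cdot\tilde U_\varepsilon$ and $\int_\Omega(A_\varepsilon\cdot\nabla)\tilde V_\varepsilon\cdot\tilde V_\varepsilon$ vanish by $\nabla\cdot A_\varepsilon=\nabla\cdot B_\varepsilon=0$, so the genuinely dangerous contributions reduce to $\int_\Omega(V\cdot\nabla)\tilde A\cdot\tilde U_\varepsilon$ and $\int_\Omega(U\cdot\nabla)\tilde B\cdot\tilde V_\varepsilon$; within these the critical pieces are the vertical parts $\int_\Omega V_3\,\partial_z\tilde A\cdot\tilde U_\varepsilon$ and $\int_\Omega U_3\,\partial_z\tilde B\cdot\tilde V_\varepsilon$, since $U_3,V_3$ are controlled only with an $\varepsilon$-weight in $Y_\varepsilon$. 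I would resolve this exactly as in the primitive-equations literature: using the incompressibility relations $V_3=-\int_0^z\nabla_H\cdot\tilde V_\varepsilon\,d\xi$ and $U_3=-\int_0^z\nabla_H\cdot\tilde U_\varepsilon\,d\xi$ (the analogues of \eqref{2.14}--\eqref{2.15}) to trade the badly weighted vertical unknowns for the horizontal gradients $\nabla_H\tilde V_\varepsilon,\nabla_H\tilde U_\varepsilon$ that live in $\mathcal D_\varepsilon$, then closing with anisotropic Ladyzhenskaya/Sobolev interpolation against the $L^\infty_tH^1\cap L^2_tH^2$ bounds of $(\tilde A,\tilde B)$ from \cite{2017-Cao-Li-Titi-Global}, every factor of $\tilde U_\varepsilon,\tilde V_\varepsilon$ being absorbed by Young's inequality into $\mathcal D_\varepsilon$ up to a coefficient depending on the strong-solution norms.

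Finally, for $\mathcal R_\varepsilon$ I would integrate by parts so that each residual is paired with a dissipation-controlled factor: $\varepsilon^{\alpha-2}\int_\Omega\partial_{zz}\tilde A\cdot\tilde U_\varepsilon=-\varepsilon^{\alpha-2}\int_\Omega\partial_z\tilde A\cdot\partial_z\tilde U_\varepsilon$ splits as $\varepsilon^{(\alpha-2)/2}\Vert\partial_z\tilde A\Vert_2\cdot\varepsilon^{(\alpha-2)/2}\Vert\partial_z\tilde U_\varepsilon\Vert_2$ and is absorbed by Young, leaving an $O(\varepsilon^{\alpha-2})$ remainder; the $\varepsilon^2$ vertical residuals (after moving the horizontal Laplacian onto $U_3$ to expose $\varepsilon^2\Vert\nabla_H U_3\Vert_2^2\subset\mathcal D_\varepsilon$) are bounded against $\varepsilon U_3,\varepsilon V_3$ and the strong-solution norms, leaving an $O(\varepsilon^2)$ remainder, so $\mathcal R_\varepsilon\le C\varepsilon^{\min\{2,\alpha-2\}}(1+\text{strong-solution norms})$. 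Since the shared initial data forces $Y_\varepsilon(0)=0$, Gr\"onwall's inequality over $[0,t_1^\ast)$ then yields $\sup_tY_\varepsilon+\int_0^{t_1^\ast}\mathcal D_\varepsilon\le C\varepsilon^{\gamma}(t_1^\ast+1)e^{C(t_1^\ast+1)}[1+(\Vert\tilde A_0\Vert_2^2+\Vert A_{3,0}\Vert_2^2+\Vert\tilde B_0\Vert_2^2+\Vert B_{3,0}\Vert_2^2)^2]$ with $\gamma=\min\{2,\alpha-2\}$, which is the stated estimate; the listed strong convergences and the $O(\varepsilon^{\gamma/2})$ rate follow by reading off the individual terms, the convergences $A_{3,\varepsilon}\to A_3$, $B_{3,\varepsilon}\to B_3$ being recovered from the horizontal ones through \eqref{2.14}--\eqref{2.15}.
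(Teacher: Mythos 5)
Your proposal is correct and follows essentially the same route as the paper: the authors also run a relative-energy (weak--strong) argument in the weighted norm $\Vert(\tilde U_\varepsilon,\varepsilon U_3,\tilde V_\varepsilon,\varepsilon V_3)\Vert_2^2$, combining the weak solution's energy inequality \eqref{2.8}, the strong solution's energy equality, and the cross terms obtained by testing the weak formulation against the strong solution (their Proposition \ref{prop3.3}), then controlling the transport terms via the incompressibility identities and the anisotropic tri-linear estimate of Lemma \ref{lem3.1}, and the residuals as $O(\varepsilon^2)+O(\varepsilon^{\alpha-2})$ before closing with Gr\"onwall. The decomposition of the dangerous vertical transport terms, the treatment of the $\varepsilon^{\alpha-2}\partial_{zz}$ and $\varepsilon^2$ residuals, and the recovery of the $A_{3,\varepsilon}\to A_3$, $B_{3,\varepsilon}\to B_3$ convergences from the horizontal gradients all match the paper's Proposition \ref{prop3.4} and the subsequent proof of Theorem \ref{thm2.3}.
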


\begin{theorem}\label{thm2.4}
Suppose that a periodic pair $({\tilde A_0},{\tilde B_0}) \in {H^1}(\Omega )$ with
\begin{equation*}
                {\nabla _H} \cdot \left( {\int_{ - 1}^1 {{{\tilde A}_0}(x,y,z)dz} } \right) = 0\;and\;{\nabla _H} \cdot \left( {\int_{ - 1}^1 {{{\tilde B}_0}(x,y,z)dz} } \right) = 0,
\end{equation*}
and that $({\partial _z}{\tilde A_0},{\partial _z}{\tilde B_0}) \in {L^p}(\Omega )$ for some $p \in (2,\infty )$. Then there exists a unique global-in-time strong solution $({\tilde A},{\tilde B})$ to system \eqref{2.10}, supplemented with the periodic boundary and initial conditions \eqref{2.4}-\eqref{2.5} and symmetry condition \eqref{2.6}. Moreover, we have the following estimate
\begin{equation*}
               \mathop {\sup }\limits_{0 \le s \le t} \left( {\Vert {(\tilde A,\tilde B)} \Vert_{{H^1}(\Omega )}^2} \right)(s) + \int_0^t {\left( {\Vert {{\nabla _H}\tilde A} \Vert_{{H^1}(\Omega )}^2 + \Vert {{\nabla _H}\tilde B} \Vert_{{H^1}(\Omega )}^2 + \Vert {({\partial _t}\tilde A,{\partial _t}\tilde B)} \Vert_2^2} \right)}  \le {N_2}(t),
\end{equation*}
for any $t\in [0,\infty)$, where ${N_2}(t)$ is a non-negative continuous increasing function defined on $[0,\infty)$.
\end{theorem}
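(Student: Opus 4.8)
\textbf{Proof proposal for Theorem \ref{thm2.4}.}
Since the local-in-time strong solution already exists (by the $H^1$ theory recalled from \cite{2017-Cao-Li-Titi-Global} and Remark \ref{rem2.2}), the entire task reduces to deriving a priori estimates on $[0,t]$ that depend continuously on $t$ and hence permit continuation of the local solution to all of $[0,\infty)$. The plan is to build these bounds in three layers, exploiting two structural features of the PEHM \eqref{2.10}: the hydrostatic balance $\partial_z p = 0$, which renders the vertical-derivative equations pressure-free, and the Els\"asser symmetrization, in which $B$ transports $\tilde A$ and $A$ transports $\tilde B$, so that the nonlinear coupling is antisymmetric at the energy level.

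First I would establish the basic $L^2$ energy estimate by testing the $\tilde A$- and $\tilde B$-equations against $\tilde A$ and $\tilde B$. Because $\nabla\cdot A=\nabla\cdot B=0$, the transport terms $(B\cdot\nabla)\tilde A$ and $(A\cdot\nabla)\tilde B$ produce no net contribution after integration by parts, and the pressure terms vanish against the divergence-free fields; this yields a uniform bound on $\sup_{[0,t]}\|(\tilde A,\tilde B)\|_2^2$ together with $\int_0^t(\|\nabla_H\tilde A\|_2^2+\|\nabla_H\tilde B\|_2^2)$, consistent with the energy inequality \eqref{2.8}.

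The crux is the second layer: propagating the $L^p$ regularity of the vertical derivatives. Writing $\theta=\partial_z\tilde A$ and $\eta=\partial_z\tilde B$ and differentiating \eqref{2.10} in $z$, the pressure drops out entirely (since $\partial_z\nabla_H p=\nabla_H\partial_z p=0$) and, using $\partial_z B_3=-\nabla_H\cdot\tilde B$, one is left with
\begin{equation*}
\partial_t\theta+(B\cdot\nabla)\theta+(\eta\cdot\nabla_H)\tilde A-(\nabla_H\cdot\tilde B)\theta-\Delta_H\theta=0,
\end{equation*}
together with its $\tilde B$-counterpart. Testing against $|\theta|^{p-2}\theta$ (resp. $|\eta|^{p-2}\eta$), the transport term again integrates to zero by incompressibility, while the horizontal diffusion furnishes a coercive term controlling $\|\nabla_H(|\theta|^{p/2})\|_2^2$ with a favorable, $p$-dependent constant. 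The two surviving terms, the coupling $(\eta\cdot\nabla_H)\tilde A$ and the stretching term $(\nabla_H\cdot\tilde B)|\theta|^p$, must be absorbed using anisotropic Ladyzhenskaya/Sobolev inequalities that treat the horizontal and vertical directions separately, so that the vertical integration calls only on the propagated $L^p$ norms while the horizontal integration is paid for by the dissipation and the $L^2$ energy from the first layer. This closes a Gronwall inequality for $\|\partial_z\tilde A\|_p^p+\|\partial_z\tilde B\|_p^p$ and gives a global-in-time bound depending continuously on $t$ and on the data.

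With the $L^p$ control of $(\partial_z\tilde A,\partial_z\tilde B)$ in hand, I would finally run the $H^1$ estimate by testing the $z$-differentiated and the horizontally-differentiated equations against $\partial_z\tilde A$, $-\Delta_H\tilde A$ and their $\tilde B$-analogues; using the same anisotropic inequalities, the nonlinear terms now close because wherever \cite{2017-Cao-Li-Titi-Global} invoked an $L^\infty$ bound on the solution, the propagated $L^p$ norm of $\partial_z(\tilde A,\tilde B)$ suffices in its place, which is precisely where the $L^\infty$ hypothesis is removed. A Gronwall argument then yields the stated bound on $\sup_{[0,t]}\|(\tilde A,\tilde B)\|_{H^1}^2$ and on $\int_0^t(\|\nabla_H\tilde A\|_{H^1}^2+\|\nabla_H\tilde B\|_{H^1}^2)$, after which the time-derivative control $\int_0^t\|(\partial_t\tilde A,\partial_t\tilde B)\|_2^2$ is read off directly from the equation, giving the continuous increasing function $N_2(t)$; uniqueness follows from an $L^2$ energy estimate for the difference of two solutions. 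I expect the main obstacle to be the absence of vertical dissipation: the vertical-derivative equations carry no smoothing in $z$, so everything hinges on arranging the anisotropic interpolation so that the vertical direction is handled purely by the propagated $L^p$ norm while the horizontal dissipation absorbs all derivative losses, and on verifying that this substitution is uniform enough to close the bootstrap without the $L^\infty$ assumption.
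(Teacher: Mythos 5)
Your first two layers match the paper: the $L^2$ energy identity is \eqref{3.5}, and the $L^p$ propagation of $(\partial_z\tilde A,\partial_z\tilde B)$ via the $z$-differentiated, pressure-free equations \eqref{3.24}--\eqref{3.25}, tested against $|\partial_z\tilde A|^{p-2}\partial_z\tilde A$ and closed with Lemma \ref{lem3.1} and Gr\"onwall, is exactly \eqref{3.26}--\eqref{3.28}. The divergence occurs at your third layer, and that is where your argument has a genuine gap. You assert that in the $H^1$ estimates one may substitute the propagated $L^p$ bound on $\partial_z(\tilde A,\tilde B)$ ``wherever \cite{2017-Cao-Li-Titi-Global} invoked an $L^\infty$ bound on the solution.'' That substitution is precisely the hard point, and you give no indication of how the relevant nonlinear terms would actually close with $\|\partial_z(\tilde A,\tilde B)\|_p$ in place of $\|(\tilde A,\tilde B)\|_\infty$; the two quantities control different things, and the $L^\infty$ bound of the solution itself (not of its vertical derivative) is what the cited $H^1$ scheme consumes. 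There is also a circularity in your layering: your global-in-time Gr\"onwall inequality for $\|\partial_z\tilde A\|_p^p+\|\partial_z\tilde B\|_p^p$ needs $\int_0^t\Vert\nabla_H\tilde A\Vert_{H^1}^2\,ds$ (second-order horizontal derivatives) as an integrable coefficient, but in your plan that quantity is only produced by the subsequent $H^1$ layer, which in turn presupposes the $L^p$ bound.

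The paper closes the argument quite differently, and more cheaply. It runs the $L^p$ estimate only on the local existence interval $[0,t_1^*)$, where Proposition \ref{prop3.2} already supplies $\int_0^{t_1^*}\Vert\nabla_H(\tilde A,\tilde B)\Vert_{H^1}^2\,ds\le C$ (so no circularity). It then picks a positive time $t_1\in(t_1^*/8,t_1^*)$ at which, by the integrated smoothing bound \eqref{3.22} and the anisotropic embedding of Proposition \ref{prop3.6}, the solution has become bounded: $(\tilde A(t_1),\tilde B(t_1))\in L^\infty(\Omega)$, while $(\partial_z\tilde A(t_1),\partial_z\tilde B(t_1))\in L^p(\Omega)$ by the propagation just established. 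At that point both hypotheses of the known global well-posedness result (Proposition \ref{prop3.5}, i.e.\ \cite{2017-Cao-Li-Titi-Global}) are met with $t_1$ as the new initial time, and the global solution and the estimate $N_2(t)$ follow by concatenating \eqref{3.23} and \eqref{3.29}. In other words, the $L^\infty$ hypothesis is not replaced inside the $H^1$ estimates; it is recovered instantaneously by horizontal parabolic regularization and then the existing theorem is invoked verbatim. If you want to salvage your route, you would need to actually carry out the modified $H^1$ estimates and verify term by term that $\Vert\partial_z(\tilde A,\tilde B)\Vert_p$ suffices, which is a substantial undertaking the paper deliberately avoids.
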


\begin{remark}\label{rem2.5}
Through the similar argument as in \cite{2017-Cao-Li-Titi-Global}, we can obtain a global-in-time strong solution to system \eqref{2.10} under two additional regularity conditions $({\partial _z}{{\tilde A}_0},{\partial _z}{{\tilde B}_0}) \in {L^p}(\Omega )$ with $p \in (2,\infty )$ and $({{\tilde A}_0},{{\tilde B}_0}) \in {L^\infty }(\Omega )$, while in Theorem \ref{thm2.4} we also obtain the global well-posed result to system $(2.10)$ under only one regularity condition $({\partial _z}{{\tilde A}_0},{\partial _z}{{\tilde B}_0}) \in {L^p}(\Omega )$ with $p \in (2,\infty )$, which will slightly improve the global well-posed result in \cite{2017-Cao-Li-Titi-Global}.
\end{remark}

On account of the global well-posedness of system \eqref{2.10} in Theorem 2.4, we give the corresponding global-in-time strong convergence conclusion as follows.
\begin{theorem}\label{thm2.6}
Given a pair of periodic function $({{\tilde A}_0},{{\tilde B}_0}) \in {H^1}(\Omega )$ such that for any $(x,y) \in M$,
\begin{equation*}
                {\nabla _H} \cdot \left( {\int_{ - 1}^1 {{{\tilde A}_0}(x,y,z)dz} } \right) = 0\;and\;{\nabla _H} \cdot \left( {\int_{ - 1}^1 {{{\tilde B}_0}(x,y,z)dz} } \right) = 0,
\end{equation*}
and $({\partial _z}{\tilde A_0},{\partial _z}{\tilde B_0}) \in {L^p}(\Omega )$, for some $p \in (2,\infty )$. Assume that $({{\tilde A}_\varepsilon },{A_{3,\varepsilon }},{{\tilde B}_\varepsilon },{B_{3,\varepsilon }})$ is an arbitrary Leray-Hopf weak solution to system \eqref{2.3}, satisfying the energy inequality \eqref{2.8}, and that $(\tilde A,{A_3},\tilde B,{B_3})$ is the unique global-in-time strong solution to system \eqref{2.10}, supplemented with the same periodic boundary and initial conditions \eqref{2.4}-\eqref{2.5} and symmetric condition \eqref{2.6}. Set
\begin{equation*}
                ({\tilde U_\varepsilon },{U_3},{\tilde V_\varepsilon },{V_3}) = ({\tilde A_\varepsilon } - \tilde A,{A_{3,\varepsilon }} - {A_3},{\tilde B_\varepsilon } - \tilde B,{B_{3,\varepsilon }} - {B_3}).
\end{equation*}
Then, the following estimate holds for any $\mathcal{T}\in(0,\infty)$, that is,
\begin{equation*}
\begin{aligned}
              \mathop {\sup }\limits_{0 \le t \le \mathcal{T} } &\left( {\Vert {({{\tilde U}_\varepsilon },\varepsilon {U_3},{{\tilde V}_\varepsilon },\varepsilon {V_3})} \Vert_2^2} \right)(t)\\
              &+ \int_0^{\mathcal{T}} {\left( {\Vert {{\nabla _H}{{\tilde U}_\varepsilon }} \Vert_2^2 + {\varepsilon ^{\alpha  - 2}}\Vert {{\partial _z}{{\tilde U}_\varepsilon }} \Vert_2^2 + \Vert {{\nabla _H}{{\tilde V}_\varepsilon }} \Vert_2^2 + {\varepsilon ^{\alpha  - 2}}\Vert {{\partial _z}{{\tilde V}_\varepsilon }} \Vert_2^2} \right)dt} \\
              &+ \int_0^{\mathcal{T}} {\left( {{\varepsilon ^2}\Vert {{\nabla _H}{U_3}} \Vert_2^2 + {\varepsilon ^\alpha }\Vert {{\partial _z}{U_3}} \Vert_2^2 + {\varepsilon ^2}\Vert {{\nabla _H}{V_3}} \Vert_2^2 + {\varepsilon ^\alpha }\Vert {{\partial _z}{V_3}} \Vert_2^2} \right)dt} \\
              \le {\varepsilon ^\gamma }&{N_3(\mathcal{T})},
\end{aligned}
\end{equation*}
for a non-negative continuous increasing function $N_3(t)$ defined on $[0,\infty)$, depending only on $\mathcal{T}$, $L_1$, $L_2$, ${\Vert {({{\tilde A}_0},{{\tilde B}_0})} \Vert_{{H^1}}}$ and ${\Vert {({\partial _z}{{\tilde A}_0},{\partial _z}{{\tilde B}_0})} \Vert_p}$, but independent of $\varepsilon$, where $\gamma = \min \{ 2,\alpha - 2\}$ with $\alpha \in (2,\infty )$. As a consequence, the local-in-time strong convergences in Theorem \ref{thm2.3} can be extended to the global-in-time strong convergences.
\end{theorem}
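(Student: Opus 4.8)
The plan is to run the relative-energy (error) estimate exactly as in the proof of Theorem~\ref{thm2.3}, but now on an arbitrary finite interval $[0,\mathcal{T}]$, using the global well-posedness of Theorem~\ref{thm2.4} to replace the local existence time $t_1^*$ by $+\infty$ and to control every constant in terms of the global a priori bound $N_2(\mathcal{T})$. The workflow is: set up the equations for $(\tilde U_\varepsilon,U_3,\tilde V_\varepsilon,V_3)$ and carry out the energy identity; reproduce the dissipation and estimate the nonlinear and $\varepsilon$-defect terms; and close by Gr\"onwall, the decisive point being that the Gr\"onwall coefficient is globally integrable on $[0,\mathcal{T}]$ precisely because of the time integrals controlled by $N_2(\mathcal{T})$.

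First I would derive the error equations by regarding the global strong solution $(\tilde A,A_3,\tilde B,B_3,p)$ of \eqref{2.10} as an approximate solution of the SHMHD system \eqref{2.3}. Substituting it into \eqref{2.3} leaves two explicit residuals: in the horizontal momentum equations the missing vertical viscosity produces a forcing $\varepsilon^{\alpha-2}\partial_{zz}\tilde A$ (resp. $\varepsilon^{\alpha-2}\partial_{zz}\tilde B$), while in the vertical equations, where \eqref{2.10} retains only $\partial_z p=0$, one is left with the full $\varepsilon^2$-weighted acceleration, transport and horizontal-diffusion terms. Subtracting then gives a coupled system for $(\tilde U_\varepsilon,\tilde V_\varepsilon)$ of the same form as \eqref{2.3}, forced by the difference of the transport terms, by the pressure difference $\nabla_H(p_\varepsilon-p)$, and by these $O(\varepsilon^{\alpha-2})$ and $O(\varepsilon^2)$ residuals. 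I would record at the outset that $U_3=-\int_0^z\nabla_H\cdot\tilde U_\varepsilon\,d\xi$ and $V_3=-\int_0^z\nabla_H\cdot\tilde V_\varepsilon\,d\xi$ follow from the incompressibility constraints and \eqref{2.14}--\eqref{2.15}, and that $U_3(0)=V_3(0)=0$ since the initial data coincide; this lets me express all vertical quantities through horizontal derivatives of the errors.

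Testing the horizontal error equations with $\tilde U_\varepsilon,\tilde V_\varepsilon$, and using the $\varepsilon^2$-weighted vertical equations to treat the pressure coupling $\int(p_\varepsilon-p)\,\partial_z U_3$, reproduces the dissipation integrals on the left-hand side of the claimed bound; the horizontal-viscosity residual contributes $\varepsilon^{\alpha-2}\int\partial_{zz}\tilde A\cdot\tilde U_\varepsilon=-\varepsilon^{\alpha-2}\int\partial_z\tilde A\cdot\partial_z\tilde U_\varepsilon$, bounded after Young's inequality by an absorbable fraction of the dissipation plus $C\varepsilon^{\alpha-2}\|\partial_z\tilde A\|_2^2$. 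The vertical residual yields $O(\varepsilon^2)$ contributions built from the $\varepsilon^2$-weighted transport, diffusion and (after integration by parts in time, using $U_3(0)=0$) acceleration terms of $A_3,B_3$; together with the previous bound these produce the overall scale $\varepsilon^\gamma$ with $\gamma=\min\{2,\alpha-2\}$. The nonlinear transport differences are split, writing $U_3,V_3$ through $\nabla_H\cdot\tilde U_\varepsilon,\nabla_H\cdot\tilde V_\varepsilon$ and invoking anisotropic Ladyzhenskaya/Agmon-type inequalities, into a small multiple of the dissipation plus $g(t)\,\mathcal{E}(t)$, where $\mathcal{E}(t)=\|(\tilde U_\varepsilon,\varepsilon U_3,\tilde V_\varepsilon,\varepsilon V_3)\|_2^2(t)$ and $g(t)$ is a combination of norms of the strong solution such as $\|\nabla_H\tilde A\|_{H^1}^2$, $\|\nabla_H\tilde B\|_{H^1}^2$ and $\sup_{[0,\mathcal{T}]}\|(\tilde A,\tilde B)\|_{H^1}^2$.

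The final step is Gr\"onwall's inequality, and this is where Theorem~\ref{thm2.4} is indispensable. Since $\int_0^{\mathcal{T}}g(t)\,dt$ is a combination of $\int_0^{\mathcal{T}}\big(\|\nabla_H\tilde A\|_{H^1}^2+\|\nabla_H\tilde B\|_{H^1}^2+\|(\partial_t\tilde A,\partial_t\tilde B)\|_2^2\big)\,dt$ and $\mathcal{T}\sup_{[0,\mathcal{T}]}\|(\tilde A,\tilde B)\|_{H^1}^2$, all of which are bounded by $N_2(\mathcal{T})$, the coefficient $g$ is integrable on the \emph{whole} interval $[0,\mathcal{T}]$, while the $\varepsilon^\gamma$-sized forcing integrates to $\varepsilon^\gamma$ times a function of $N_2(\mathcal{T})$. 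Gr\"onwall then gives $\sup_{0\le t\le\mathcal{T}}\mathcal{E}(t)+\int_0^{\mathcal{T}}(\text{dissipation})\le\varepsilon^\gamma N_3(\mathcal{T})$ with $N_3$ continuous, increasing, and depending only on $\mathcal{T}$, $L_1$, $L_2$, $\|(\tilde A_0,\tilde B_0)\|_{H^1}$ and $\|(\partial_z\tilde A_0,\partial_z\tilde B_0)\|_p$; letting $\varepsilon\to0$ upgrades the local-in-time convergences of Theorem~\ref{thm2.3} to the stated global ones. The main obstacle is not any single estimate but this global closure: in Theorem~\ref{thm2.3} the Gr\"onwall factor $e^{C(t_1^*+1)}$ is only controlled up to the local existence time, whereas here one must verify that the time-integrated coefficient $g$ and the defect forcing stay finite on every $[0,\mathcal{T}]$ — exactly the improvement delivered by the global bound $N_2(\mathcal{T})$ of Theorem~\ref{thm2.4} (obtained without the $L^\infty$-hypothesis of \cite{2017-Cao-Li-Titi-Global}) — together with the careful treatment of the $\varepsilon^2$-order vertical/pressure defect, whose acceleration part is the delicate piece that this regularity is tailored to absorb.
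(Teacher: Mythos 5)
Your overall plan — rerun the error estimate of Theorem~\ref{thm2.3} on $[0,\mathcal{T}]$ and close Gr\"onwall using the global bound $N_2(\mathcal{T})$ of Theorem~\ref{thm2.4} in place of the local bound \eqref{3.1} — is exactly the paper's strategy, and your identification of the decisive point (global integrability of the Gr\"onwall coefficient) is correct. However, the way you propose to obtain the basic error inequality contains a genuine gap. You subtract the two systems to get equations for $(\tilde U_\varepsilon,U_3,\tilde V_\varepsilon,V_3)$ and then test these error equations with $\tilde U_\varepsilon,\tilde V_\varepsilon$. This requires, among other things, the identity $\int_\Omega \partial_t\tilde A_\varepsilon\cdot\tilde A_\varepsilon\,d\Omega=\tfrac12\tfrac{d}{dt}\Vert\tilde A_\varepsilon\Vert_2^2$ and the exact cancellation of the nonlinear self-interaction terms of the $\varepsilon$-solution, i.e.\ effectively the energy \emph{equality} for $(\tilde A_\varepsilon,\ldots)$. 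But the theorem only assumes an arbitrary Leray--Hopf weak solution of \eqref{2.3} satisfying the energy \emph{inequality} \eqref{2.8}; at this regularity ($C_w L^2\cap L^2_{loc}H^1$ in a 3D setting) the weak solution is not an admissible test function in its own equation and the energy equality is not known. Your direct subtract-and-test scheme is the one the paper legitimately uses in Section~4, where the $H^2$ data guarantee that the $\varepsilon$-solution is itself a strong solution.

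The paper avoids this by the weak--strong relative energy method: it establishes the cross-term integral \emph{equality} \eqref{3.2} of Proposition~\ref{prop3.3} by testing the weak formulation of \eqref{2.3} against the strong solution of \eqref{2.10} (whose rigorous justification is delegated to \cite{2019-Li-Titi,2013-Bardos}), tests \eqref{2.10} against the weak solution to get \eqref{3.4}, uses the energy identity \eqref{3.5} for the strong solution and only the energy \emph{inequality} \eqref{3.6} for the weak one, and combines them as $\eqref{3.2}+\eqref{3.4}-\eqref{3.5}-\eqref{3.6}$ so that the unavailable equality enters with the favorable sign. If you replace your first step by this relative-energy setup (i.e.\ by Proposition~\ref{prop3.4} verbatim), the rest of your argument — the $O(\varepsilon^{\alpha-2})$ and $O(\varepsilon^2)$ residuals giving the rate $\varepsilon^{\gamma}$ with $\gamma=\min\{2,\alpha-2\}$, and the Gr\"onwall closure via $\int_0^{\mathcal{T}}(\Vert\nabla_H\tilde A\Vert_{H^1}^2+\Vert\nabla_H\tilde B\Vert_{H^1}^2+\Vert(\partial_t\tilde A,\partial_t\tilde B)\Vert_2^2)\,dt+\mathcal{T}\sup_{[0,\mathcal{T}]}\Vert(\tilde A,\tilde B)\Vert_{H^1}^2\le C(N_2(\mathcal{T})+N_2^2(\mathcal{T}))$ — goes through and coincides with the paper's proof.
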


\begin{theorem}\label{thm2.7}
Given a pair of periodic function $({{\tilde A}_0},{{\tilde B}_0}) \in {H^2}(\Omega )$ such that for any $(x,y) \in M$,
\begin{equation*}
                {\nabla _H} \cdot \left( {\int_{ - 1}^1 {{{\tilde A}_0}(x,y,z)dz} } \right) = 0\;and\;{\nabla _H} \cdot \left( {\int_{ - 1}^1 {{{\tilde B}_0}(x,y,z)dz} } \right) = 0,
\end{equation*}
Assume that $({{\tilde A}_\varepsilon },{A_{3,\varepsilon }},{{\tilde B}_\varepsilon },{B_{3,\varepsilon }})$ is the unique local-in-time strong solution to system \eqref{2.3}, and that $({\tilde A},{A_{3}},{\tilde B},{B_{3}})$ is the unique global-in-time strong solution to system \eqref{2.10}, supplemented with the same periodic boundary and initial conditions \eqref{2.4}-\eqref{2.5}and symmetric condition \eqref{2.6}. Set
\begin{equation*}
                ({\tilde U_\varepsilon },{U_3},{\tilde V_\varepsilon },{V_3}) = ({\tilde A_\varepsilon } - \tilde A,{A_{3,\varepsilon }} - {A_3},{\tilde B_\varepsilon } - \tilde B,{B_{3,\varepsilon }} - {B_3}).
\end{equation*}
Then for any finite time $\mathcal{T}\in(0,\infty)$, we can choose a small positive number $\varepsilon (\mathcal{T}) = {\left( {\frac{{5\delta_0^2}}{{8{N_6}(\mathcal{T})}}} \right)^{1/\gamma }}$ such that there exists a unique strong-in-time solution $({{\tilde A}_\varepsilon },{A_{3,\varepsilon }},{{\tilde B}_\varepsilon },{B_{3,\varepsilon }})$ of system \eqref{2.3} on $[0,\mathcal{T}]$, and such that the following estimate holds for system \eqref{4.2} (more details, see Section 4), that is,
\begin{equation*}
 \begin{aligned}
             \mathop {\sup }\limits_{0 \le t \le \mathcal{T}} &\left( {\Vert {({{\tilde U}_\varepsilon },\varepsilon {U_3},{{\tilde V}_\varepsilon },\varepsilon {V_3})} \Vert_{{H^1}(\Omega )}^2} \right)(t)\\
             &+ \int_0^\mathcal{T} {\left( {\Vert {{\nabla _H}{{\tilde U}_\varepsilon }} \Vert_{{H^1}(\Omega )}^2 + {\varepsilon ^{\alpha  - 2}}\Vert {{\partial _z}{{\tilde U}_\varepsilon }} \Vert_{{H^1}(\Omega )}^2 + \Vert {{\nabla _H}{{\tilde V}_\varepsilon }} \Vert_{{H^1}(\Omega )}^2 + {\varepsilon ^{\alpha  - 2}}\Vert {{\partial _z}{{\tilde V}_\varepsilon }} \Vert_{{H^1}(\Omega )}^2} \right)dt} \\
             &+ \int_0^\mathcal{T} {\left( {{\varepsilon ^2}\Vert {{\nabla _H}{U_3}} \Vert_{{H^1}(\Omega )}^2 + {\varepsilon ^\alpha }\Vert {{\nabla _H}{U_3}} \Vert_{{H^1}(\Omega )}^2 + {\varepsilon ^2}\Vert {{\nabla _H}{V_3}} \Vert_{{H^1}(\Omega )}^2 + {\varepsilon ^\alpha }\Vert {{\nabla _H}{V_3}} \Vert_{{H^1}(\Omega )}^2} \right)dt} \\
             \le {\varepsilon ^\gamma }&{N_7}(\mathcal{T}),
\end{aligned}
\end{equation*}
provided that $\varepsilon \in (0,\varepsilon (\mathcal{T}))$, where $\gamma = \min \{ 2,\alpha - 2\}$ with $\alpha \in (2,\infty )$ and $N_7(t)$, a non-negative continuous increasing function defined on $[0,\infty)$, is independent of $\varepsilon$. Thus, we have the following global-in-time strong convergences
\begin{equation*}
    \begin{aligned}
                   &({{\tilde A}_\varepsilon },\varepsilon {A_{3,\varepsilon }},{{\tilde B}_\varepsilon },\varepsilon {B_{3,\varepsilon }}) \to (\tilde A,0,\tilde B,0)\;\;{\rm{in}}\;{L^\infty }([0,{\cal T}];{H^1}(\Omega )),\\
                   &({\nabla _H}{{\tilde A}_\varepsilon },{\varepsilon ^{(\alpha  - 2)/2}}{\partial _z}{{\tilde A}_{\varepsilon }},\varepsilon {\nabla _H}{A_{3,\varepsilon }},{\varepsilon ^{\alpha /2}}{\partial _z}{{\tilde A}_\varepsilon },{A_{3,\varepsilon }}) \to ({\nabla _H}\tilde A,0,0,0,{A_3})\;\;{\rm{in}}\;{L^2}([0,{\cal T}];{H^1}(\Omega )),\\
                   &({\nabla _H}{{\tilde B}_\varepsilon },{\varepsilon ^{(\alpha  - 2)/2}}{\partial _z}{{\tilde B}_\varepsilon },\varepsilon {\nabla _H}{B_{3,\varepsilon }},{\varepsilon ^{\alpha /2}}{\partial _z}{{\tilde B}_\varepsilon },{B_{3,\varepsilon }}) \to ({\nabla _H}\tilde B,0,0,0,{B_3})\;\;{\rm{in}}\;{L^2}([0,{\cal T}];{H^1}(\Omega )),\\
                   &({A_{3,\varepsilon }},{B_{3,\varepsilon }}) \to ({A_3},{B_3})\;\;{\rm{in}}\;{L^\infty }([0,{\cal T}];{L^2}(\Omega )),
    \end{aligned}
\end{equation*}
and the rate of the convergence is of the order $O({\varepsilon ^{\gamma /2}})$.
\end{theorem}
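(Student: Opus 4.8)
The plan is to work with the difference system \eqref{4.2} satisfied by $(\tilde U_\varepsilon,U_3,\tilde V_\varepsilon,V_3)$, obtained by subtracting the PEHM \eqref{2.10} from the SHMHD equations \eqref{2.3}, and to close an $H^1$ energy estimate through a continuation (bootstrap) argument in $\varepsilon$. The principal new difficulty, relative to Theorems \ref{thm2.3} and \ref{thm2.6}, is that for $H^2$ data the system \eqref{2.3} only furnishes a \emph{local}-in-time strong solution whose lifespan may a priori shrink as $\varepsilon\to0$, since the vertical viscosity coefficient $\varepsilon^{\alpha-2}$ degenerates. Hence existence on all of $[0,\mathcal{T}]$ is part of what must be established, not assumed, which is exactly why the explicit threshold $\varepsilon(\mathcal{T})$ appears in the statement.

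First I would record, on $[0,\mathcal{T}]$, the $\varepsilon$-independent bounds for the limit solution $(\tilde A,\tilde B)$ furnished by the $H^2$ global well-posedness of \eqref{2.10} and by Proposition~4.1: control of $\sup_t\|(\tilde A,\tilde B)\|_{H^2}$, of $\int_0^{\mathcal{T}}\|(\tilde A,\tilde B)\|_{H^3}^2$-type dissipation, and of $\|(\partial_t\tilde A,\partial_t\tilde B)\|$. These fixed background quantities, which I bundle into the controlling functions $N_6(\mathcal{T})$ and $N_7(\mathcal{T})$, are the data against which the error is measured, and I would reconstruct the vertical errors throughout via $U_3=-\int_0^z\nabla_H\cdot\tilde U_\varepsilon\,d\xi$ and $V_3=-\int_0^z\nabla_H\cdot\tilde V_\varepsilon\,d\xi$.

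Second, on any interval where the local strong solution of \eqref{2.3} exists, I would test the equations of \eqref{4.2} against the unknowns themselves (with the $\varepsilon^2$ weights on the vertical components) for the $L^2$ estimate, and against their horizontal Laplacian and second vertical derivative for the $H^1$ estimate. The right-hand forcing originates from the terms neglected in the limit—the $\varepsilon^2$ vertical-momentum contributions and the $\varepsilon^{\alpha-2}\partial_{zz}$ viscosity—so, after using the background bounds, these enter at order $\varepsilon^\gamma$ with $\gamma=\min\{2,\alpha-2\}$. The nonlinear interactions split as usual into (error)$\times\nabla$(limit), (limit)$\times\nabla$(error) and (error)$\times\nabla$(error), each treated by anisotropic Ladyzhenskaya/Sobolev inequalities; because there is no vertical dissipation for the horizontal unknowns, the $\partial_z$-carrying terms must be tracked with great care, and this anisotropic bookkeeping—ensuring every term is either absorbed into the horizontal dissipation or carries an $\varepsilon$-power—is the technical heart of the estimate. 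The outcome is a differential inequality
\begin{equation*}
\frac{d}{dt}\mathcal{E}_\varepsilon+\mathcal{D}_\varepsilon\le C\varepsilon^\gamma P\bigl(N_7(\mathcal{T})\bigr)+Q\bigl(N_7(\mathcal{T})\bigr)\,\mathcal{E}_\varepsilon\bigl(1+\mathcal{E}_\varepsilon\bigr),
\end{equation*}
where $\mathcal{E}_\varepsilon=\|(\tilde U_\varepsilon,\varepsilon U_3,\tilde V_\varepsilon,\varepsilon V_3)\|_{H^1(\Omega)}^2$, $\mathcal{D}_\varepsilon$ collects the horizontal dissipation, and the inequality is valid as long as $\mathcal{E}_\varepsilon$ stays below the threshold $\delta_0^2$ used to absorb the cubic term.

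Third, and crucially, I would run the continuity argument. Let $t_\varepsilon^{\ast\ast}$ be the supremum of times $t\le\mathcal{T}$ up to which the strong solution of \eqref{2.3} exists and satisfies $\mathcal{E}_\varepsilon(s)\le\delta_0^2$ for all $s\in[0,t]$. On $[0,t_\varepsilon^{\ast\ast})$ this bound, together with the background bounds, controls the relevant norm of $(\tilde A_\varepsilon,\tilde B_\varepsilon)=(\tilde A,\tilde B)+(\tilde U_\varepsilon,\tilde V_\varepsilon)$, so the local strong solution cannot break down there; Grönwall applied to the differential inequality then yields $\sup_{[0,t_\varepsilon^{\ast\ast})}\mathcal{E}_\varepsilon\le\varepsilon^\gamma N_6(\mathcal{T})$ together with the full estimate $\sup\mathcal{E}_\varepsilon+\int_0^{t_\varepsilon^{\ast\ast}}\mathcal{D}_\varepsilon\le\varepsilon^\gamma N_7(\mathcal{T})$. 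Choosing $\varepsilon<\varepsilon(\mathcal{T})=\bigl(\tfrac{5\delta_0^2}{8N_6(\mathcal{T})}\bigr)^{1/\gamma}$ forces $\mathcal{E}_\varepsilon\le\tfrac58\delta_0^2<\delta_0^2$ on that interval, which strictly improves the defining bound; by maximality this is possible only if $t_\varepsilon^{\ast\ast}=\mathcal{T}$, so the strong solution of \eqref{2.3} exists on all of $[0,\mathcal{T}]$ and the stated $\varepsilon^\gamma$ estimate holds there. The convergences, with rate $O(\varepsilon^{\gamma/2})$, then read off from the definitions of $(\tilde U_\varepsilon,U_3,\tilde V_\varepsilon,V_3)$ exactly as in Theorem \ref{thm2.3}. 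The main obstacle is to make the continuation criterion for the degenerate-viscosity system \eqref{2.3} compatible with the anisotropic nonlinear estimates—that is, to verify that mere $H^1$-closeness to the smooth limit genuinely prevents blow-up despite the vanishing vertical viscosity—which is precisely what dictates the smallness threshold $\delta_0$ and the explicit cutoff $\varepsilon(\mathcal{T})$.
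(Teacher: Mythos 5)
Your proposal follows essentially the same route as the paper: an $L^2$ and first‑order energy estimate for the difference system \eqref{4.2} (Propositions \ref{prop4.2}–\ref{prop4.3}, obtained by testing with the unknowns and with $-\Delta$ of the unknowns and using the anisotropic trilinear estimates of Lemma \ref{lem3.1}), closed under the smallness condition $\delta_0$ and then upgraded to $[0,\mathcal{T}]$ by exactly the bootstrap/continuity argument with the cutoff $\varepsilon(\mathcal{T})=\bigl(\tfrac{5\delta_0^2}{8N_6(\mathcal{T})}\bigr)^{1/\gamma}$ and a contradiction with the blow‑up criterion (Proposition \ref{prop4.4}). The only cosmetic difference is that you impose the smallness on the full $H^1$ energy rather than only on its gradient part, which changes nothing.
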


\begin{remark}\label{rem2.8}
As stated in Theorem \ref{thm2.7}, when the initial pair $({{\tilde A}_0},{{\tilde B}_0})$ belongs to ${H^2}(\Omega )$, system \eqref{2.10} admits a unique global-in-time strong solution $({\tilde A},{\tilde B})$, which is proved by the similar method as in \cite{2016-Cao-Li-Titi-Global}. From \eqref{2.12} and \eqref{2.13}, we deduce that $({{\tilde A}_0},{A_{3,0}},{{\tilde B}_0},{B_{3,0}}) \in {H^1}(\Omega )$. In addition, we know from Remark \ref{rem2.2} that system \eqref{2.3}, supplemented with \eqref{2.4}-\eqref{2.6}, has a unique local-in-time strong solution $({{\tilde A}_\varepsilon },{A_{3,\varepsilon }},{{\tilde B}_\varepsilon },{B_{3,\varepsilon }})$.
\end{remark}

\section{Strong convergence for the $H^1$-initial data}
\subsection{The $H^1$-initial data without additional regularity}
Our aim of this subsection is to establish the proof of Theorem \ref{thm2.3}. More specifically, suppose that the initial data $({{\tilde A}_0},{{\tilde B}_0}) \in {H^1}(\Omega )$ with
\begin{equation*}
                 {\nabla _H} \cdot \left( {\int_{ - 1}^1 {{{\tilde A}_0}(x,y,z )dz} } \right) = 0\;{\rm{and}}\;{\nabla _H} \cdot \left( {\int_{ - 1}^1 {{{\tilde B}_0}(x,y,z )dz} } \right) = 0
\end{equation*}
for any $(x,y)\in M$, we prove that SHMHD equations \eqref{2.3} strongly converge to PEHM \eqref{2.10} as the aspect ratio parameter $\varepsilon$ tends zero, in which the convergences are local-in-time. Note that for the initial data $({{\tilde A}_0},{{\tilde B}_0}) \in {H^1}(\Omega )$, system \eqref{2.3} subject to \eqref{2.4}-\eqref{2.6} has a global-in-time weak solution, while system \eqref{2.10} corresponding to \eqref{2.4}-\eqref{2.6} possesses a unique local-in-time strong solution.

The next Lemma will be used in the proof of strong convergences.
\begin{lemma}\label{lem3.1}
(see, Lemma 2.1 in \cite{2017-Cao-Li-Titi-Global}) The following tri-linear estimates hold true:
\begin{equation*}
    \begin{aligned}
                  &\int_M {\left( {\int_{ - 1}^1 {\vert {f(x,y,z)} \vert dz} } \right)\left( {\int_{ - 1}^1 {\vert {g(x,y,z)h(x,y,z)} \vert dz} } \right)dxdy} \\
                  \le& \min \left\{ {C\Vert f \Vert_2^{\frac{1}{2}}\left( {\Vert f \Vert_2^{\frac{1}{2}} + \Vert {{\nabla _H}f} \Vert_2^{\frac{1}{2}}} \right){{\Vert h \Vert}_2}\Vert g \Vert_2^{\frac{1}{2}}\left( {\Vert g \Vert_2^{\frac{1}{2}} + \Vert {{\nabla _H}g} \Vert_2^{\frac{1}{2}}} \right),} \right.\\
                  &\left. {C{{\Vert f \Vert}_2}\Vert g \Vert_2^{\frac{1}{2}}\left( {\Vert g \Vert_2^{\frac{1}{2}} + \Vert {{\nabla _H}g} \Vert_2^{\frac{1}{2}}} \right)\Vert h \Vert_2^{\frac{1}{2}}\left( {\Vert h \Vert_2^{\frac{1}{2}} + \Vert {{\nabla _H}h} \Vert_2^{\frac{1}{2}}} \right)} \right\},
    \end{aligned}
\end{equation*}
for every $f$, $g$, $h$ such that the right-hand sides make sense and are finite, where $C$ is a positive constant.
\end{lemma}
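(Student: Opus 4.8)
The statement is an anisotropic interpolation inequality with no time dependence, so the plan is to reduce it entirely to the two-dimensional Ladyzhenskaya (Gagliardo--Nirenberg) inequality applied to functions obtained by integrating in the vertical variable. First I would introduce the vertical integrals $F(x,y)=\int_{-1}^1|f|\,dz$, $G(x,y)=\int_{-1}^1 g^2\,dz$ and $H(x,y)=\int_{-1}^1 h^2\,dz$, all nonnegative functions on the two-dimensional domain $M$. The elementary facts used throughout are that vertical integration commutes with $\nabla_H$ and obeys $|\nabla_H|f||\le|\nabla_H f|$, whence $\|F\|_{L^2(M)}\le\sqrt2\,\|f\|_2$ and $\|\nabla_H F\|_{L^2(M)}\le\sqrt2\,\|\nabla_H f\|_2$, together with Cauchy--Schwarz in $z$, giving $\int_{-1}^1|gh|\,dz\le G^{1/2}H^{1/2}$ pointwise on $M$. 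The genuinely non-routine choice is how to distribute the three functions among the Hölder factors on $M$: the two distributions yield precisely the two members of the minimum.

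For the first bound I would single out $h$, keeping it in the full norm $\|h\|_2$. After Cauchy--Schwarz in $z$ the left-hand side is dominated by $\int_M F\,G^{1/2}H^{1/2}\,dxdy$, and Hölder on $M$ with $H^{1/2}\in L^2(M)$ (so that $\|H^{1/2}\|_{L^2(M)}=\|h\|_2$) leaves $\big(\int_M F^2G\,dxdy\big)^{1/2}\|h\|_2$. I then bound $\int_M F^2G\,dxdy\le\|F\|_{L^4(M)}^2\|G\|_{L^2(M)}$ and invoke the two planar inequalities $\|F\|_{L^4(M)}^2\le C\|F\|_{L^2(M)}\big(\|F\|_{L^2(M)}+\|\nabla_H F\|_{L^2(M)}\big)$ and $\|G\|_{L^2(M)}^2\le C\|G\|_{L^1(M)}\big(\|G\|_{L^1(M)}+\|\nabla_H G\|_{L^1(M)}\big)$. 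Converting the right-hand norms back to $\Omega$ through $\|G\|_{L^1(M)}=\|g\|_2^2$ and $\|\nabla_H G\|_{L^1(M)}\le 2\|g\|_2\|\nabla_H g\|_2$, and reorganizing the fractional powers, produces the first asserted bound.

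For the second bound I would instead keep $f$ in the full norm and treat $g,h$ together: Hölder on $M$ gives $\int_M F\,\overline{gh}\,dxdy\le\|F\|_{L^2(M)}\|\overline{gh}\|_{L^2(M)}$ with $\overline{gh}=\int_{-1}^1|gh|\,dz$, and the planar Gagliardo--Nirenberg inequality $\|\overline{gh}\|_{L^2(M)}^2\le C\|\overline{gh}\|_{L^1(M)}\big(\|\overline{gh}\|_{L^1(M)}+\|\nabla_H\overline{gh}\|_{L^1(M)}\big)$ together with $\|\overline{gh}\|_{L^1(M)}\le\|g\|_2\|h\|_2$ and $\|\nabla_H\overline{gh}\|_{L^1(M)}\le\|\nabla_H g\|_2\|h\|_2+\|g\|_2\|\nabla_H h\|_2$ shows the product is controlled by the square of the second asserted factor. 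The step I expect to require the most care is the bookkeeping of fractional powers: the honest output of each chain is a slightly stronger combination (for instance $\|g\|_2^{3/4}(\|g\|_2+\|\nabla_H g\|_2)^{1/4}$ rather than $\|g\|_2^{1/2}(\|g\|_2^{1/2}+\|\nabla_H g\|_2^{1/2})$), and one must repeatedly use $\sqrt{a+b}\le\sqrt a+\sqrt b$ and Young's inequality to absorb these into the stated form up to the universal constant $C$. Crucially, no vertical regularity of $f,g,h$ enters anywhere, which is exactly why this estimate is suited to the PEHM, where the vertical dissipation degenerates as $\varepsilon\to0$.
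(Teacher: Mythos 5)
The paper itself does not prove this lemma (it is quoted from Lemma 2.1 of Cao--Li--Titi), so your argument can only be compared with the standard proof of that reference; your overall architecture (Cauchy--Schwarz in $z$, then H\"older on $M$ with exponents $(4,2,4)$ or $(2,4,4)$, then the planar Ladyzhenskaya inequality plus Minkowski) is exactly that standard route. However, the specific interpolation inequality you invoke twice is false. You claim, for nonnegative $u$ on the two-dimensional domain $M$, that $\Vert u\Vert_{L^2(M)}^2\le C\Vert u\Vert_{L^1(M)}\bigl(\Vert u\Vert_{L^1(M)}+\Vert\nabla_H u\Vert_{L^1(M)}\bigr)$, applying it once to $G=\int_{-1}^1 g^2\,dz$ and once to $\overline{gh}=\int_{-1}^1|gh|\,dz$. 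This inequality fails: for a bump $u_r(x)=\psi(x/r)$ concentrated at scale $r$ one has $\Vert u_r\Vert_{L^2}^2\sim r^2$ while $\Vert u_r\Vert_{L^1}\Vert\nabla u_r\Vert_{L^1}\sim r^2\cdot r=r^3$, so the ratio blows up as $r\to0$; the correct $W^{1,1}(M)\hookrightarrow L^2(M)$ embedding gives only $\Vert u\Vert_{L^2}^2\le C\Vert u\Vert_{W^{1,1}}^2$, which is too weak to produce the factor $\Vert g\Vert_2^{1/2}$ you need. The same defect invalidates your second chain, since $\overline{gh}$ is an arbitrary nonnegative $W^{1,1}(M)$ function of exactly the type covered by the counterexample.

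The gap is repairable without changing your structure, because the functions you feed into the false inequality are not arbitrary but are vertical integrals of squares. Set $\phi=\bigl(\int_{-1}^1 g^2\,dz\bigr)^{1/2}$, so that $\Vert G\Vert_{L^2(M)}=\Vert\phi\Vert_{L^4(M)}^2$, $\Vert\phi\Vert_{L^2(M)}=\Vert g\Vert_2$, and, by Cauchy--Schwarz in $z$, $|\nabla_H\phi|\le\bigl(\int_{-1}^1|\nabla_H g|^2\,dz\bigr)^{1/2}$, whence $\Vert\nabla_H\phi\Vert_{L^2(M)}\le\Vert\nabla_H g\Vert_2$. The genuine $L^2$-based Ladyzhenskaya inequality $\Vert\phi\Vert_{L^4(M)}^2\le C\Vert\phi\Vert_{L^2(M)}\bigl(\Vert\phi\Vert_{L^2(M)}+\Vert\nabla_H\phi\Vert_{L^2(M)}\bigr)$ then yields $\Vert G\Vert_{L^2(M)}\le C\Vert g\Vert_2\bigl(\Vert g\Vert_2+\Vert\nabla_H g\Vert_2\bigr)$, and $\Vert G\Vert_{L^2(M)}^{1/2}$ is exactly the stated factor after $\sqrt{a+b}\le\sqrt a+\sqrt b$; no Young-inequality bookkeeping of the kind you describe is needed. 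For the second bound, replace your treatment of $\overline{gh}$ by the pointwise bound $\overline{gh}\le G^{1/2}H^{1/2}$ followed by $\Vert\overline{gh}\Vert_{L^2(M)}^2\le\Vert G\Vert_{L^2(M)}\Vert H\Vert_{L^2(M)}$ and the same corrected estimate on each factor. With these substitutions your proof is correct and coincides with the cited one.
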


Adopting the similar method as in \cite{2017-Cao-Li-Titi-Global}, for the initial data $({{\tilde A}_0},{{\tilde B}_0}) \in {H^1}(\Omega )$, we obtain the local well-posedness of PEHM \eqref{2.10}. The local well-posedness of PEHM \eqref{2.10} is stated as follows.

\begin{proposition}\label{prop3.2}
Assume that an initial pair $({{\tilde A}_0},{{\tilde B}_0}) \in {H^1}(\Omega )$ is two periodic functions with
\begin{equation*}
                  {\nabla _H} \cdot \left( {\int_{ - 1}^1 {{{\tilde A}_0}(x,y,z)dz} } \right) = 0\;{\rm{and}}\;{\nabla _H} \cdot \left( {\int_{ - 1}^1 {{{\tilde B}_0}(x,y,z)dz} } \right) = 0,
\end{equation*}
for any $(x,y)\in M$ and fulfils the symmetry condition \eqref{2.7}. Then the following conclusions are true:

(i) System \eqref{2.10}, corresponding to the periodic boundary and initial conditions \eqref{2.4}-\eqref{2.5} and symmetric condition \eqref{2.6}, admits a unique local-in-time strong solution $({\tilde A},{\tilde B})$ enjoying the following regularity properties
\begin{equation*}
              \begin{aligned}
              &(\tilde A,\tilde B) \in {L^\infty }([0,t_1^ * );{H^1}(\Omega )) \cap C([0,t_1^ * );{L^2}(\Omega )),\\
              &({\nabla _H}\tilde A,{\nabla _H}\tilde B) \in {L^2}([0,t_1^ * );{H^1}(\Omega )),\;({\partial _t}\tilde A,{\partial _t}\tilde B) \in {L^2}([0,t_1^ * );{L^2}(\Omega )),
              \end{aligned}
\end{equation*}
where $t_1^ *$ denotes the maximum local existence time of this strong solution;

(ii) The local-in-time strong solution $({\tilde A},{\tilde B})$ of system \eqref{2.10} satisfies the following estimate
\begin{equation} \label{3.1}
               \mathop {\sup }\limits_{0 \le s \le t} \left( {\Vert {(\tilde A,\tilde B)} \Vert_{{H^1}(\Omega )}^2} \right)(s) + \int_0^t {\left( {\Vert {{\nabla _H}\tilde A} \Vert_{{H^1}(\Omega )}^2 + \Vert {{\nabla _H}\tilde B} \Vert_{{H^1}(\Omega )}^2 + \Vert {({\partial _t}\tilde A,{\partial _t}\tilde B)} \Vert_2^2} \right)ds}  \le C,
\end{equation}
for any $t \in [0,t_1^ * )$, where $C$ is a positive constant.
\end{proposition}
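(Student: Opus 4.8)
The plan is to prove Proposition \ref{prop3.2} by the standard programme for the primitive equations: construct approximate solutions, derive uniform a priori estimates, pass to the limit by compactness, and finally establish uniqueness. Because only horizontal viscosity is available, the a priori estimates will close only on a finite time interval, which accounts for the local-in-time nature of the conclusion. Throughout, the vertical components $A_3$ and $B_3$ are not independent unknowns but are recovered from $\tilde A$ and $\tilde B$ through \eqref{2.14}--\eqref{2.15}, so all estimates are carried out on the horizontal components $(\tilde A,\tilde B)$, and the system is treated as a nonlocal system in those variables.

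First I would set up a Galerkin scheme compatible with the periodicity, the divergence-free constraint and the symmetry condition \eqref{2.7}, and observe that the symmetry is preserved so that \eqref{2.6} holds for the approximations. On the finite-dimensional solutions the a priori estimates are performed. The basic $L^2$ estimate is obtained by testing the $\tilde A$-equation with $\tilde A$ and the $\tilde B$-equation with $\tilde B$: the convection terms $(B\cdot\nabla)\tilde A$ and $(A\cdot\nabla)\tilde B$ drop out after integration by parts since $\nabla\cdot A=\nabla\cdot B=0$, and the pressure term vanishes because $p$ is $z$-independent and the vertical average of the horizontal divergence is zero by the hypothesis on the initial data, a property propagated in time. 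This yields a uniform bound on $\|(\tilde A,\tilde B)\|_2$ together with integrability of $\|(\nabla_H\tilde A,\nabla_H\tilde B)\|_2$.

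The heart of the matter is the $H^1$ estimate, which I would split into a horizontal part and a vertical part. For the horizontal derivatives I would test the equations with $-\Delta_H\tilde A$ and $-\Delta_H\tilde B$; for the vertical derivatives I would differentiate the equations in $z$, use $\partial_z A_3=-\nabla_H\cdot\tilde A$ and $\partial_z B_3=-\nabla_H\cdot\tilde B$, and test with $\partial_z\tilde A$ and $\partial_z\tilde B$. The dangerous terms are those in which the vertical velocity meets a vertical derivative, e.g.\ contributions of the form $\int (\partial_z B_3)\,\partial_z\tilde A\cdot\partial_z\tilde A$ and $\int B_3\,\partial_{zz}\tilde A\cdot\partial_z\tilde A$; since $A_3$ and $B_3$ are vertical anti-derivatives of horizontal divergences, these are genuine three-factor integrals with one factor written as a $z$-integral. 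These are precisely the objects controlled by Lemma \ref{lem3.1}: applying the tri-linear estimate together with Young's inequality, I would absorb the resulting $\|\nabla_H(\cdot)\|_2$ factors into the horizontal dissipation on the left-hand side, at the cost of a superlinear term in $\|(\tilde A,\tilde B)\|_{H^1}^2$.

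The main obstacle, as anticipated, is closing this vertical estimate: with no vertical viscosity to absorb $\|\partial_{zz}(\cdot)\|_2$, the best one can hope for is a Riccati-type differential inequality $\frac{d}{dt}Y(t)\le C\bigl(1+Y(t)\bigr)^{2}$ for $Y=\|(\tilde A,\tilde B)\|_{H^1}^2$, whose solution is finite only up to a maximal time $t_1^*$ determined by the initial $H^1$ norm; this gives the local bound \eqref{3.1}, and the $L^2$ control of $(\partial_t\tilde A,\partial_t\tilde B)$ then follows by reading it off the equations. With these uniform-in-$n$ bounds, Aubin--Lions compactness provides a subsequence converging to a strong solution enjoying the asserted regularity, and the limit inherits the periodicity, symmetry and divergence-free structure. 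Finally, uniqueness follows by forming the difference of two solutions, carrying out an $L^2$ energy estimate in which the nonlinear differences are again bounded via Lemma \ref{lem3.1}, and invoking Grönwall's inequality.
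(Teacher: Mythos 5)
The paper does not actually prove Proposition \ref{prop3.2}; it merely asserts that the result follows by ``adopting the similar method as in \cite{2017-Cao-Li-Titi-Global}'', and your outline is a faithful reconstruction of exactly that method (approximation scheme preserving periodicity, symmetry and the divergence-free constraint; basic $L^2$ energy estimate with the pressure term vanishing via the $z$-independence of $p$ and the barotropic constraint; anisotropic $H^1$ estimates closed through the tri-linear Lemma \ref{lem3.1} into a Riccati-type inequality that is only locally solvable; Aubin--Lions compactness; and an $L^2$ uniqueness argument via Lemma \ref{lem3.1} and Gr\"onwall). Your approach is therefore essentially the same as the one the paper relies on, and the key technical points --- which convection terms are dangerous, why the horizontal dissipation suffices to absorb them only at the cost of a superlinear growth term, and hence why the bound \eqref{3.1} is only local in time --- are correctly identified.
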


The following proposition is critical for proving strong convergences of SHMHD equations \eqref{2.3} to PEHM \eqref{2.10}, which is formally obtained by testing SHMHD equations \eqref{2.3} with $({\tilde A },{A_{3}},{\tilde B },{B_{3}})$. More information on the rigorous justification for this proposition may be found in the references \cite{2019-Li-Titi} and \cite{2013-Bardos}.

\begin{proposition}\label{prop3.3}
Given a pair of periodic function $({{\tilde A}_0},{{\tilde B}_0}) \in {H^1}(\Omega )$, such that
\begin{equation*}
                 {\nabla _H} \cdot \left( {\int_{ - 1}^1 {{{\tilde A}_0}(x,y,z)dz} } \right) = 0,\;{A_{3,0}}(x,y,z) =  - {\nabla _H} \cdot \left( {\int_0^z {{{\tilde A}_0}(x,y,\xi )d\xi } } \right),
\end{equation*}
and
\begin{equation*}
                 {\nabla _H} \cdot \left( {\int_{ - 1}^1 {{{\tilde B}_0}(x,y,z)dz} } \right) = 0,\;{B_{3,0}}(x,y,z) =  - {\nabla _H} \cdot \left( {\int_0^z {{{\tilde B}_0}(x,y,\xi )d\xi } } \right).
\end{equation*}
Assume that $({{\tilde A}_\varepsilon },{A_{3,\varepsilon }},{{\tilde B}_\varepsilon },{B_{3,\varepsilon }})$ is an arbitrary Leray-Hopf weak solution of system \eqref{2.3}, satisfying the energy inequality \eqref{2.8}, and that $({\tilde A},{A_{3}},{\tilde B},{B_{3}})$ is the unique local-in-time strong solution of system \eqref{2.10}. Then, the following integral equality holds, that is,
\begin{equation} \label{3.2}
        \begin{aligned}
                 &\left[ {\int_\Omega  {({{\tilde A}_\varepsilon } \cdot \tilde A + {\varepsilon ^2}{A_{3,\varepsilon }}{A_3} + {{\tilde B}_\varepsilon } \cdot \tilde B + {\varepsilon ^2}{B_{3,\varepsilon }}{B_3})d\Omega} } \right]({r_0})\\
                 &+ \int_0^{r_0} {\int_\Omega  {({\nabla _H}{{\tilde A}_\varepsilon }:{\nabla _H}\tilde A + {\varepsilon ^{\alpha  - 2}}{\partial _z}{{\tilde A}_\varepsilon } \cdot {\partial _z}\tilde A + {\varepsilon ^2}{\nabla _H}{A_{3,\varepsilon }} \cdot {\nabla _H}{A_3} + {\varepsilon ^\alpha }{\partial _z}{A_{3,\varepsilon }} {\partial _z}{A_3} )d\Omega} dt} \\
                 &+ \int_0^{r_0} {\int_\Omega  {({\nabla _H}{{\tilde B}_\varepsilon }:{\nabla _H}\tilde B + {\varepsilon ^{\alpha  - 2}}{\partial _z}{{\tilde B}_\varepsilon } \cdot {\partial _z}\tilde B + {\varepsilon ^2}{\nabla _H}{B_{3,\varepsilon }} \cdot {\nabla _H}{B_3} + {\varepsilon ^\alpha }{\partial _z}{B_{3,\varepsilon }} {\partial _z}{B_3})d\Omega} dt} \\
                 =& \Vert {{{\tilde A}_0}} \Vert_2^2 + \frac{{{\varepsilon ^2}}}{2}\Vert {{A_{3,0}}} \Vert_2^2 + \frac{{{\varepsilon ^2}}}{2}\Vert {{A_3}({r_0})} \Vert_2^2 + {\varepsilon ^2}\int_0^{r_0} {\int_\Omega  {\left( {\int_0^z {{\partial _t}\tilde A(x,y,\xi ,t)d\xi } } \right) \cdot {\nabla _H}{U_{3,\varepsilon }}d\Omega} dt} \\
                 &+ \Vert {{{\tilde B}_0}} \Vert_2^2 + \frac{{{\varepsilon ^2}}}{2}\Vert {{B_{3,0}}} \Vert_2^2 + \frac{{{\varepsilon ^2}}}{2}\Vert {{B_3}({r_0})} \Vert_2^2 + {\varepsilon ^2}\int_0^{r_0} {\int_\Omega  {\left( {\int_0^z {{\partial _t}\tilde B(x,y,\xi ,t)d\xi } } \right) \cdot {\nabla _H}{V_{3,\varepsilon }}d\Omega} dt} \\
                 &+ \int_0^{r_0} {\int_\Omega  {\left[ { - ({B_\varepsilon } \cdot \nabla ){{\tilde A}_\varepsilon } \cdot \tilde A - {\varepsilon ^2}({B_\varepsilon } \cdot \nabla {A_{3,\varepsilon }}){A_3}- ({A_\varepsilon } \cdot \nabla ){{\tilde B}_\varepsilon } \cdot \tilde B - {\varepsilon ^2}({A_\varepsilon } \cdot \nabla {B_{3,\varepsilon }}){B_3}} \right]d\Omega} dt} \\
                 &+ \int_0^{r_0} {\int_\Omega  {({{\tilde A}_\varepsilon } \cdot {\partial _t}\tilde A + {{\tilde B}_\varepsilon } \cdot {\partial _t}\tilde B)d\Omega} dt},
        \end{aligned}
\end{equation}
for any ${r_0} \in [0,t_1^ * )$.
\end{proposition}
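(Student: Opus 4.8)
The plan is to obtain \eqref{3.2} by inserting the PEHM strong solution $(\tilde A,A_3,\tilde B,B_3)$ as the test pair in the weak formulation of Definition \ref{def2.1} for the SHMHD system \eqref{2.3}, localized to the time interval $[0,r_0]$. The pair $(\tilde A,A_3)$ is spatially periodic, satisfies the symmetry condition \eqref{2.6}, and obeys the three-dimensional incompressibility $\nabla_H\cdot\tilde A+\partial_z A_3=0$ (and likewise $(\tilde B,B_3)$), so it is an admissible test direction and the pressure terms drop out identically. Since this pair is not in $C_c^\infty(\bar\Omega\times[0,\infty))$ --- it carries only the regularity furnished by Proposition \ref{prop3.2} and is not compactly supported in time --- I would first regularize it by divergence-free, symmetry-preserving spatial mollification, multiply by a smooth temporal cutoff approximating $\mathbf 1_{[0,r_0]}$, test, and then remove the regularizations. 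The weak continuity $(\tilde A_\varepsilon,\dots)\in C_w([0,\infty);L_\sigma^2(\Omega))$ guarantees a well-defined trace at $t=r_0$, producing the boundary term $[\int_\Omega(\tilde A_\varepsilon\cdot\tilde A+\varepsilon^2 A_{3,\varepsilon}A_3+\tilde B_\varepsilon\cdot\tilde B+\varepsilon^2 B_{3,\varepsilon}B_3)\,d\Omega](r_0)$. This limiting procedure is exactly the rigorous justification carried out in \cite{2019-Li-Titi} and \cite{2013-Bardos}.

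The time-derivative part of the weak identity already carries $\partial_t$ on the test functions, so equating the test pair with $(\tilde A,A_3,\tilde B,B_3)$ immediately produces $\int_0^{r_0}\int_\Omega\tilde A_\varepsilon\cdot\partial_t\tilde A\,d\Omega\,dt$ --- retained verbatim as the last line of \eqref{3.2} --- together with the $\varepsilon^2$-weighted vertical piece $\varepsilon^2\int_0^{r_0}\int_\Omega A_{3,\varepsilon}\,\partial_t A_3\,d\Omega\,dt$, which I process below. Simultaneously the diffusion terms $\nabla_H\tilde A_\varepsilon:\nabla_H\tilde A$, $\varepsilon^{\alpha-2}\partial_z\tilde A_\varepsilon\cdot\partial_z\tilde A$, $\varepsilon^2\nabla_H A_{3,\varepsilon}\cdot\nabla_H A_3$ and $\varepsilon^\alpha\partial_z A_{3,\varepsilon}\partial_z A_3$ are transferred to the left-hand side, while the convective terms $(B_\varepsilon\cdot\nabla)\tilde A_\varepsilon\cdot\tilde A$ and $\varepsilon^2(B_\varepsilon\cdot\nabla A_{3,\varepsilon})A_3$ stay on the right; testing the $B$-equation with $(\tilde B,B_3)$ supplies the symmetric contributions.

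The key bookkeeping concerns the $\varepsilon^2$-weighted vertical terms. In $\varepsilon^2\int_0^{r_0}\int_\Omega A_{3,\varepsilon}\,\partial_t A_3$ I would split $A_{3,\varepsilon}=A_3+U_{3,\varepsilon}$ with $U_{3,\varepsilon}:=A_{3,\varepsilon}-A_3$; the diagonal part integrates exactly to $\tfrac{\varepsilon^2}{2}\|A_3(r_0)\|_2^2-\tfrac{\varepsilon^2}{2}\|A_{3,0}\|_2^2$, which, combined with the initial datum $\varepsilon^2\int_\Omega A_{3,0}A_3(0)\,d\Omega=\varepsilon^2\|A_{3,0}\|_2^2$ and the identity $\int_\Omega\tilde A_0\cdot\tilde A(0)\,d\Omega=\|\tilde A_0\|_2^2$, yields the terms $\|\tilde A_0\|_2^2+\tfrac{\varepsilon^2}{2}\|A_{3,0}\|_2^2+\tfrac{\varepsilon^2}{2}\|A_3(r_0)\|_2^2$ of \eqref{3.2}. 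For the cross part I would invoke the kinematic relation $\partial_t A_3=-\nabla_H\cdot\int_0^z\partial_t\tilde A\,d\xi$ coming from \eqref{2.14} and integrate by parts in the horizontal variables (periodicity removes boundary terms) to rewrite $\varepsilon^2\int_0^{r_0}\int_\Omega U_{3,\varepsilon}\,\partial_t A_3=\varepsilon^2\int_0^{r_0}\int_\Omega\big(\int_0^z\partial_t\tilde A\,d\xi\big)\cdot\nabla_H U_{3,\varepsilon}$, precisely the form recorded in \eqref{3.2}; the $B$-counterpart is identical with $V_{3,\varepsilon}$ in place of $U_{3,\varepsilon}$.

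I expect the main obstacle to be the rigorous justification of the test step rather than the algebra: the PEHM solution is only $H^1$ in space, and $\partial_t A_3$, $\partial_t B_3$ are \emph{not} controlled in $L^2(\Omega)$, since Proposition \ref{prop3.2} provides $\partial_t\tilde A,\partial_t\tilde B\in L^2([0,t_1^*);L^2(\Omega))$ without the extra horizontal derivative. This is exactly why the vertical coupling term must be kept in the integrated-by-parts form $\big(\int_0^z\partial_t\tilde A\,d\xi\big)\cdot\nabla_H U_{3,\varepsilon}$: the factor $\int_0^z\partial_t\tilde A\,d\xi$ lies in $L^2$ by the Cauchy--Schwarz inequality in $z$, while $\nabla_H U_{3,\varepsilon}=\nabla_H A_{3,\varepsilon}-\nabla_H A_3$ is controlled through the energy inequality \eqref{2.8} and the $H^1$-bound on $\nabla_H A_3$ from Proposition \ref{prop3.2}. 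With each term thus finite, the mollification-and-cutoff approximation passes to the limit, and the equality \eqref{3.2} holds for every $r_0\in[0,t_1^*)$.
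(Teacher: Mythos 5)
Your proposal is correct and follows exactly the route the paper intends: the paper gives no detailed proof of Proposition \ref{prop3.3}, stating only that the identity is formally obtained by testing system \eqref{2.3} with $(\tilde A,A_3,\tilde B,B_3)$ and deferring the rigorous justification to \cite{2019-Li-Titi} and \cite{2013-Bardos}. Your bookkeeping (the split $A_{3,\varepsilon}=A_3+U_{3,\varepsilon}$ producing $\tfrac{\varepsilon^2}{2}\Vert A_{3,0}\Vert_2^2+\tfrac{\varepsilon^2}{2}\Vert A_3(r_0)\Vert_2^2$, and the horizontal integration by parts via $\partial_t A_3=-\nabla_H\cdot\int_0^z\partial_t\tilde A\,d\xi$) reproduces \eqref{3.2} term by term and supplies more detail than the paper itself records.
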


\begin{proposition}\label{prop3.4}
Denote $({{\tilde U}_\varepsilon },{U_{3,\varepsilon }},{{\tilde V}_\varepsilon },{V_{3,\varepsilon }}) = ({{\tilde A}_\varepsilon } - \tilde A,{A_{3,\varepsilon }} - {A_3},{{\tilde B}_\varepsilon } - \tilde B,{B_{3,\varepsilon }} - {B_3})$. Suppose that all assumptions in Proposition \ref{prop3.3} hold. Then, for any ${t} \in [0,t_1^ * )$ the following estimate
\begin{equation} \label{3.3}
\begin{aligned}
                \mathop {\sup }\limits_{0 \le s \le t}& \left( {\Vert {({{\tilde U}_\varepsilon },\varepsilon {U_{3,\varepsilon }},{{\tilde V}_\varepsilon },\varepsilon {V_{3,\varepsilon }})} \Vert_2^2} \right)(s)\\
                &+ \int_0^t {\left( {\Vert {{\nabla _H}{{\tilde U}_\varepsilon }} \Vert_2^2 + {\varepsilon ^{\alpha  - 2}}\Vert {{\partial _z}{{\tilde U}_\varepsilon }} \Vert_2^2 + \Vert {{\nabla _H}{{\tilde V}_\varepsilon }} \Vert_2^2 + {\varepsilon ^{\alpha  - 2}}\Vert {{\partial _z}{{\tilde V}_\varepsilon }} \Vert_2^2} \right)ds} \\
                &+ \int_0^t {\left( {{\varepsilon ^2}\Vert {{\nabla _H}{U_{3,\varepsilon }}} \Vert_2^2 + {\varepsilon ^\alpha }\Vert {{\partial _z}{U_{3,\varepsilon }}} \Vert_2^2 + {\varepsilon ^2}\Vert {{\nabla _H}{V_{3,\varepsilon }}} \Vert_2^2 + {\varepsilon ^\alpha }\Vert {{\partial _z}{V_{3,\varepsilon }}} \Vert_2^2} \right)ds} \\
                \le C&(t + 1){e^{C(t + 1)}}\left[ {{\varepsilon ^2} + {\varepsilon ^{\alpha  - 2}} + {\varepsilon ^2}{\left( {\Vert {{{\tilde A}_0}} \Vert_2^2 + {\varepsilon ^2}\Vert {{A_{3,0}}} \Vert_2^2 + \Vert {{{\tilde B}_0}} \Vert_2^2 + {\varepsilon ^2}\Vert {{B_{3,0}}} \Vert_2^2} \right)^2}} \right]
    \end{aligned}
\end{equation}
holds, which $C$ is a positive constant independent of $\varepsilon$.
\end{proposition}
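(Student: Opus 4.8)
The plan is to run a weak--strong (relative-energy) argument that never writes a PDE for the difference, but instead combines three scalar identities. First I would reconstruct the weighted norm on the left of \eqref{3.3} by polarization: for the horizontal component,
\[
\|\tilde U_\varepsilon\|_2^2 = \|\tilde A_\varepsilon\|_2^2 - 2\int_\Omega \tilde A_\varepsilon\cdot\tilde A\,d\Omega + \|\tilde A\|_2^2,
\]
and analogously for $\varepsilon^2\|U_{3,\varepsilon}\|_2^2$, $\|\tilde V_\varepsilon\|_2^2$, $\varepsilon^2\|V_{3,\varepsilon}\|_2^2$, together with the same splitting of the horizontal and vertical dissipation integrals. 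The three ingredients are then: (i) the energy inequality \eqref{2.8}, which bounds the $(\tilde A_\varepsilon,A_{3,\varepsilon},\tilde B_\varepsilon,B_{3,\varepsilon})$ pieces \emph{from above}; (ii) the cross-term identity \eqref{3.2} of Proposition \ref{prop3.3}, which supplies the mixed products exactly; and (iii) the energy equality for the strong solution, obtained by testing \eqref{2.10} against $(\tilde A,\tilde B)$, which handles the $(\tilde A,A_3,\tilde B,B_3)$ pieces. Since the weak solution only satisfies an inequality, a point of care is to verify that every recombined term lands on the favourable side of the estimate; this is exactly where the direction of \eqref{2.8} is used.

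Next I would isolate the genuinely $\varepsilon$-small contributions. The vertical-dissipation mismatch produces $\varepsilon^{\alpha-2}\|\partial_z\tilde A\|_2^2$ and $\varepsilon^{\alpha-2}\|\partial_z\tilde B\|_2^2$, bounded by $C\varepsilon^{\alpha-2}$ via the $H^1$ regularity in \eqref{3.1}; the vertical-momentum remainders $\varepsilon^2\|A_3(r_0)\|_2^2$, $\varepsilon^2\|B_3(r_0)\|_2^2$ and the time-transport terms $\varepsilon^2\int_0^{r_0}\!\!\int_\Omega(\int_0^z\partial_t\tilde A\,d\xi)\cdot\nabla_H U_{3,\varepsilon}\,d\Omega\,dt$ are handled by Young's inequality, absorbing $\tfrac12\varepsilon^2\|\nabla_H U_{3,\varepsilon}\|_2^2$ into the dissipation and leaving an $O(\varepsilon^2)$ residue thanks to $\partial_t\tilde A\in L^2_tL^2_x$ from Proposition \ref{prop3.2}. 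A residual $\varepsilon^2$-weighted nonlinear term, estimated by Young against the weak-solution energy (itself bounded through \eqref{2.8}), produces the quadratic factor; together these yield the $\varepsilon^2+\varepsilon^{\alpha-2}+\varepsilon^2(\cdots)^2$ structure on the right of \eqref{3.3}.

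The heart of the proof is the nonlinear terms. After integration by parts and use of the divergence-free constraints, the convective contributions from \eqref{3.2} and from the strong-solution energy equality collapse to bilinear-in-difference expressions of the schematic form $\int_\Omega(\tilde V_\varepsilon\cdot\nabla)\tilde A\cdot\tilde U_\varepsilon\,d\Omega$ and $\int_\Omega(\tilde U_\varepsilon\cdot\nabla)\tilde B\cdot\tilde V_\varepsilon\,d\Omega$, plus their $\varepsilon^2$-weighted vertical analogues. The horizontal-transport pieces are routine by Ladyzhenskaya-type interpolation, but the vertical-transport pieces are the genuine obstacle: the vertical differences carry no independent regularity and must be recovered from incompressibility via $U_{3,\varepsilon}=-\int_0^z\nabla_H\cdot\tilde U_\varepsilon\,d\xi$ and $V_{3,\varepsilon}=-\int_0^z\nabla_H\cdot\tilde V_\varepsilon\,d\xi$. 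It is precisely here that Lemma \ref{lem3.1} is invoked: pairing the vertical average of $\nabla_H\cdot\tilde V_\varepsilon$ against $\partial_z\tilde A$ and $\tilde U_\varepsilon$, the tri-linear estimate gives a bound of the form $\eta(\|\nabla_H\tilde U_\varepsilon\|_2^2+\|\nabla_H\tilde V_\varepsilon\|_2^2)+C_\eta(1+\|\nabla_H\tilde A\|_{H^1}^2+\|\nabla_H\tilde B\|_{H^1}^2)\|(\tilde U_\varepsilon,\tilde V_\varepsilon)\|_2^2$.

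Finally, choosing $\eta$ small enough to absorb the first group into the horizontal dissipation on the left, I arrive at a differential inequality of the form
\[
\frac{d}{dt}\|(\tilde U_\varepsilon,\varepsilon U_{3,\varepsilon},\tilde V_\varepsilon,\varepsilon V_{3,\varepsilon})\|_2^2 + (\text{dissipation}) \le g(t)\,\|(\tilde U_\varepsilon,\varepsilon U_{3,\varepsilon},\tilde V_\varepsilon,\varepsilon V_{3,\varepsilon})\|_2^2 + (\text{error}),
\]
where $g(t)=C(1+\|\nabla_H\tilde A\|_{H^1}^2+\|\nabla_H\tilde B\|_{H^1}^2)\in L^1(0,t_1^*)$ by \eqref{3.1} and the error is $O(\varepsilon^2+\varepsilon^{\alpha-2}+\varepsilon^2(\cdots)^2)$. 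Grönwall's inequality then produces the factor $(t+1)e^{C(t+1)}$ and closes \eqref{3.3}. I expect Step three --- the vertical-transport terms, controlled only through the hydrostatic reconstruction of $U_{3,\varepsilon},V_{3,\varepsilon}$ and the anisotropic estimate of Lemma \ref{lem3.1} --- to be the main difficulty, with a secondary care point being the bookkeeping of the exact powers of $\varepsilon$ so that the total error reduces to $\varepsilon^\gamma$ with $\gamma=\min\{2,\alpha-2\}$.
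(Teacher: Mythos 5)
Your proposal follows essentially the same route as the paper: the "polarization" recombination of the weak-solution energy inequality \eqref{2.8}, the cross-term identity \eqref{3.2}, and the strong-solution energy equality is exactly how the paper derives its relative-energy inequality \eqref{3.7}, and the subsequent treatment of the convective terms via incompressibility, the hydrostatic reconstruction of $U_{3,\varepsilon},V_{3,\varepsilon}$, the anisotropic tri-linear estimate of Lemma \ref{lem3.1}, and Grönwall matches the paper's estimates of $I_1$--$I_6$. The proposal is correct, including the correct identification of where the quadratic factor and the $\varepsilon^2+\varepsilon^{\alpha-2}$ residue come from.
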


\begin{proof}
Multiplying $(2.10)_1$ and $(2.10)_3$ by ${{\tilde A}_\varepsilon }$ and ${{\tilde B}_\varepsilon }$, respectively, and integrating the resulted equations over $\Omega \times (0,{r_0})$, after integration by parts we can deduce that
\begin{equation} \label{3.4}
\begin{aligned}
               &\int_0^{r_0} {\int_\Omega  {({{\tilde A}_\varepsilon } \cdot {\partial _t}\tilde A + {{\tilde B}_\varepsilon } \cdot {\partial _t}\tilde B + {\nabla _H}{{\tilde A}_\varepsilon }:{\nabla _H}\tilde A + {\nabla _H}{{\tilde B}_\varepsilon }:{\nabla _H}\tilde B)d\Omega} dt} \\
               =& \int_0^{r_0} {\int_\Omega  {\left[ { - (B \cdot \nabla )\tilde A \cdot {{\tilde A}_\varepsilon } - (A \cdot \nabla )\tilde B \cdot {{\tilde B}_\varepsilon }} \right]d\Omega} dt}.
\end{aligned}
\end{equation}
By replacing $({{\tilde A}_\varepsilon },{A_{3,\varepsilon }},{{\tilde B}_\varepsilon },{B_{3,\varepsilon }})$ with $(\tilde A,{A_3},\tilde B,{B_3})$, we infer from \eqref{3.4} and integration by parts that
\begin{equation} \label{3.5}
                \frac{1}{2}\left( {\Vert {\tilde A({r_0})} \Vert_2^2 + \Vert {\tilde B({r_0})} \Vert_2^2} \right) + \int_0^{{r_0}} {\left( {\Vert {{\nabla _H}\tilde A} \Vert_2^2 + \Vert {{\nabla _H}\tilde B} \Vert_2^2} \right)dt} = \frac{1}{2}\left( {\Vert {{{\tilde A}_0}} \Vert_2^2 + \Vert {{{\tilde B}_0}} \Vert_2^2} \right).
\end{equation}
From Remark \ref{rem2.2}, we see that for any ${r_0} \in [0,t_1^ * )$, the global weak solution $({{\tilde A}_\varepsilon },{A_{3,\varepsilon} },{{\tilde B}_\varepsilon },{B_{3,\varepsilon} })$ to system \eqref{2.3} satisfies the following energy inequality
\begin{equation} \label{3.6}
\begin{aligned}
                \Vert {{{\tilde A}_\varepsilon }({r_0})} \Vert_2^2 &+ \Vert {{{\tilde B}_\varepsilon }({r_0})} \Vert_2^2 + {\varepsilon ^2}\Vert {{A_{3,\varepsilon }}({r_0})} \Vert_2^2 + {\varepsilon ^2}\Vert {{B_{3,\varepsilon }}({r_0})} \Vert_2^2\\
                &+ 2\int_0^{{r_0}} {\left( {\Vert {{\nabla _H}{\tilde A}_\varepsilon} \Vert_2^2 + {\varepsilon ^{\alpha  - 2}}\Vert {{\partial _z}{\tilde A}_\varepsilon} \Vert_2^2 + \Vert {{\nabla _H}{\tilde B}_\varepsilon} \Vert_2^2 + {\varepsilon ^{\alpha  - 2}}\Vert {{\partial _z}{\tilde B}_\varepsilon} \Vert_2^2} \right)dt} \\
                &+ 2\int_0^{{r_0}} {\left( {{\varepsilon ^2}\Vert {{\nabla _H}{A_{3,\varepsilon }}} \Vert_2^2 + {\varepsilon ^\alpha }\Vert {{\partial _z}{A_{3,\varepsilon }}} \Vert_2^2 + {\varepsilon ^2}\Vert {{\nabla _H}{B_{3,\varepsilon }}} \Vert_2^2 + {\varepsilon ^\alpha }\Vert {{\partial _z}{B_{3,\varepsilon }}} \Vert_2^2} \right)dt}\\
                \le \Vert {{{\tilde A}_0}} \Vert_2^2 &+ \Vert {{{\tilde B}_0}} \Vert_2^2 + {\varepsilon ^2}\Vert {{A_{3,0}}} \Vert + {\varepsilon ^2}\Vert {{B_{3,0}}} \Vert_2^2.
\end{aligned}
\end{equation}
We start by adding \eqref{3.2} and \eqref{3.4}, then subtract the sum of \eqref{3.5} and \eqref{3.6} to obtain
\begin{equation} \label{3.7}
    \begin{aligned}
                 \frac{1}{2}&\left( {\Vert {{{\tilde U}_\varepsilon }({r_0})} \Vert_2^2 + {\varepsilon ^2}\Vert {{U_{3,\varepsilon }}({r_0})} \Vert_2^2 + \Vert {{{\tilde V}_\varepsilon }({r_0})} \Vert_2^2 + {\varepsilon ^2}\Vert {{V_{3,\varepsilon }}({r_0})} \Vert_2^2} \right)\\
                 &+ \int_0^{{r_0}} {\left( {\Vert {{\nabla _H}{{\tilde U}_\varepsilon }} \Vert_2^2 + {\varepsilon ^{\alpha  - 2}}\Vert {{\partial _z}{{\tilde U}_\varepsilon }} \Vert_2^2 + \Vert {{\nabla _H}{{\tilde V}_\varepsilon }} \Vert_2^2 + {\varepsilon ^{\alpha  - 2}}\Vert {{\partial _z}{{\tilde V}_\varepsilon }} \Vert_2^2} \right)dt} \\
                 &+ \int_0^{{r_0}} {\left( {{\varepsilon ^2}\Vert {{\nabla _H}{U_{3,\varepsilon }}} \Vert_2^2 + {\varepsilon ^\alpha }\Vert {{\partial _z}{U_{3,\varepsilon }}} \Vert_2^2 + {\varepsilon ^2}\Vert {{\nabla _H}{V_{3,\varepsilon }}} \Vert_2^2 + {\varepsilon ^\alpha }\Vert {{\partial _z}{V_{3,\varepsilon }}} \Vert_2^2} \right)dt}\\
                 \le& \int_0^{{r_0}} {\int_\Omega  {\left[ {({B_\varepsilon } \cdot \nabla ){{\tilde A}_\varepsilon } \cdot \tilde A + (B \cdot \nabla )\tilde A \cdot {{\tilde A}_\varepsilon } + ({A_\varepsilon } \cdot \nabla ){{\tilde B}_\varepsilon } \cdot \tilde B + (A \cdot \nabla )\tilde B \cdot {{\tilde B}_\varepsilon }} \right]d\Omega } dt} \\
                 &+ {\varepsilon ^2}\int_0^{{r_0}} {\int_\Omega  {\left[ {\left( { - \int_0^z {{\partial _t}\tilde A(x,y,\xi ,t)d\xi } } \right) \cdot {\nabla _H}{U_{3,\varepsilon }}} \right]d\Omega } dt} \\
                 &+ {\varepsilon ^2}\int_0^{{r_0}} {\int_\Omega  {\left[ {\left( { - \int_0^z {{\partial _t}\tilde B(x,y,\xi ,t)d\xi } } \right) \cdot {\nabla _H}{V_{3,\varepsilon }}} \right]d\Omega } dt} \\
                 &+ \int_0^{{r_0}} {\int_\Omega  {( - {\varepsilon ^2}{\nabla _H}{U_{3,\varepsilon }} \cdot {\nabla _H}{A_3} - {\varepsilon ^2}{\nabla _H}{V_{3,\varepsilon }} \cdot {\nabla _H}{B_3})d\Omega } dt} \\
                 &+ \int_0^{{r_0}} {\int_\Omega  {( - {\varepsilon ^{\alpha  - 2}}{\partial _z}{{\tilde U}_\varepsilon } \cdot {\partial _z}\tilde A - {\varepsilon ^\alpha }{\partial _z}{U_{3,\varepsilon }}{\partial _z}{A_3} - {\varepsilon ^{\alpha  - 2}}{\partial _z}{{\tilde V}_\varepsilon } \cdot {\partial _z}\tilde B - {\varepsilon ^\alpha }{\partial _z}{V_{3,\varepsilon }}{\partial _z}{B_3})d\Omega } dt} \\
                 &+ {\varepsilon ^2}\int_0^{{r_0}} {\int_\Omega  {\left[ {({B_\varepsilon } \cdot \nabla {A_{3,\varepsilon }}){A_3} + ({A_\varepsilon } \cdot \nabla {B_{3,\varepsilon }}){B_3}} \right]d\Omega } dt} \\
                 :=& {I_1} + {I_2} + {I_3} + {I_4} + {I_5} + {I_6}.
\end{aligned}
\end{equation}
In what follows, let us estimate each term on the right-hand side of \eqref{3.7} as follows. In fact, for the first integral term $I_1$, we utilize the incompressibility conditions and integration by parts to recast it as
\begin{equation} \label{3.8}
\begin{aligned}
               {I_1} =& \int_0^{{r_0}} {\int_\Omega  {\left[ {({B_\varepsilon } \cdot \nabla ){{\tilde A}_\varepsilon } \cdot \tilde A + (B \cdot \nabla )\tilde A \cdot {{\tilde A}_\varepsilon } + ({A_\varepsilon } \cdot \nabla ){{\tilde B}_\varepsilon } \cdot \tilde B + (A \cdot \nabla )\tilde B \cdot {{\tilde B}_\varepsilon }} \right]d\Omega } dt} \\
               =& \int_0^{{r_0}} {\int_\Omega  {\left[ {({{\tilde V}_\varepsilon } \cdot {\nabla _H}){{\tilde U}_\varepsilon } \cdot \tilde A + ({\nabla _H} \cdot {{\tilde V}_\varepsilon }){{\tilde U}_\varepsilon } \cdot \tilde A + ({{\tilde U}_\varepsilon } \cdot {\partial _z}\tilde A)\int_0^z {({\nabla _H} \cdot {{\tilde V}_\varepsilon })d\xi } } \right]d\Omega } dt} \\
               &+ \int_0^{{r_0}} {\int_\Omega  {\left[ {({{\tilde U}_\varepsilon } \cdot {\nabla _H}){{\tilde V}_\varepsilon } \cdot \tilde B + ({\nabla _H} \cdot {{\tilde U}_\varepsilon }){{\tilde V}_\varepsilon } \cdot \tilde B + ({{\tilde V}_\varepsilon } \cdot {\partial _z}\tilde B)\int_0^z {({\nabla _H} \cdot {{\tilde U}_\varepsilon }) d\xi } } \right]d\Omega } dt} \\
               := & {I_{11}} + {I_{12}} + {I_{13}} + {I_{14}} + {I_{15}} + {I_{16}}.
\end{aligned}
\end{equation}
Noticing that
\begin{equation*}
                \vert {{\tilde A}(z)} \vert \le \frac{1}{2}\int_{ - 1}^1 {\vert {\tilde A} \vert dz}  + \int_{ - 1}^1 {\vert {{\partial _z}{\tilde A}} \vert dz},
\end{equation*}
and invoking Lemma \ref{lem3.1} and the Young inequality, the terms ${I_{11}}$, ${I_{12}}$ and ${I_{13}}$ on the right-hand side of \eqref{3.8} can be estimated as, respectively,
\begin{equation} \label{3.9}
\begin{aligned}
              {I_{11}} =& \int_0^{{r_0}} {\int_\Omega  { {({{\tilde V}_\varepsilon } \cdot {\nabla _H}){{\tilde U}_\varepsilon } \cdot \tilde A} d\Omega } dt} \\
              \le& \int_0^{{r_0}} {\int_M {\left( {\int_{ - 1}^1 {(\vert {\tilde A} \vert + \vert {{\partial _z}\tilde A} \vert)dz} } \right)\left( {\int_{ - 1}^1 {\vert {{{\tilde V}_\varepsilon }} \vert\vert {{\nabla _H}{{\tilde U}_\varepsilon }} \vert dz} } \right)dxdy} dt} \\
              \le& C\int_0^{{r_0}} {\left( {{{\Vert {\tilde A} \Vert}_2} + \Vert {\tilde A} \Vert_2^{\frac{1}{2}}\Vert {{\nabla _H}\tilde A} \Vert_2^{\frac{1}{2}}} \right)\left( {{{\Vert {{{\tilde V}_\varepsilon }} \Vert}_2} + \Vert {{{\tilde V}_\varepsilon }} \Vert_2^{\frac{1}{2}}\Vert {{\nabla _H}{{\tilde V}_\varepsilon }} \Vert_2^{\frac{1}{2}}} \right){{\Vert {{\nabla _H}{{\tilde U}_\varepsilon }} \Vert}_2}dt} \\
              &+ C\int_0^{{r_0}} {\left( {{{\Vert {{\partial _z}\tilde A} \Vert}_2} + \Vert {{\partial _z}\tilde A} \Vert_2^{\frac{1}{2}}\Vert {{\nabla _H}{\partial _z}\tilde A} \Vert_2^{\frac{1}{2}}} \right)\Vert {{{\tilde V}_\varepsilon }} \Vert_2^{\frac{1}{2}}\Vert {{\nabla _H}{{\tilde V}_\varepsilon }} \Vert_2^{\frac{1}{2}}{{\Vert {{\nabla _H}{{\tilde U}_\varepsilon }} \Vert}_2}dt} \\
              &+ C\int_0^{{r_0}} {{{\Vert {{{\tilde V}_\varepsilon }} \Vert}_2}{{\Vert {{\nabla _H}{{\tilde U}_\varepsilon }} \Vert}_2} \left( {{{\Vert {{\partial _z}\tilde A} \Vert}_2} + \Vert {{\partial _z}\tilde A} \Vert_2^{\frac{1}{2}}\Vert {{\nabla _H}{\partial _z}\tilde A} \Vert_2^{\frac{1}{2}}} \right)dt} \\
              \le& C\int_0^{{r_0}} {\left( {\Vert {\tilde A} \Vert_2^2 + {{\Vert {\tilde A} \Vert}_2}{{\Vert {{\nabla _H}\tilde A} \Vert}_2} + \Vert {\tilde A} \Vert_2^4 + \Vert {\tilde A} \Vert_2^2\Vert {{\nabla _H}\tilde A} \Vert_2^2} \right)\Vert {{{\tilde V}_\varepsilon }} \Vert_2^2dt} \\
              &+ C\int_0^{{r_0}} {\left( {\Vert {{\partial _z}\tilde A} \Vert_2^2 + {{\Vert {{\partial _z}\tilde A} \Vert}_2}{{\Vert {{\nabla _H}{\partial _z}\tilde A} \Vert}_2} + \Vert {{\partial _z}\tilde A} \Vert_2^4 + \Vert {{\partial _z}\tilde A} \Vert_2^2\Vert {{\nabla _H}{\partial _z}\tilde A} \Vert_2^2} \right)\Vert {{{\tilde V}_\varepsilon }} \Vert_2^2dt} \\
              &+ \frac{1}{{144}}\int_0^{{r_0}} {\left( {\Vert {{\nabla _H}{{\tilde U}_\varepsilon }} \Vert_2^2 + \Vert {{\nabla _H}{{\tilde V}_\varepsilon }} \Vert_2^2} \right)dt},
\end{aligned}
\end{equation}

\begin{equation} \label{3.10}
\begin{aligned}
               {I_{12}} =& \int_0^{{r_0}} {\int_\Omega  { {({\nabla _H} \cdot {{\tilde V}_\varepsilon }){{\tilde U}_\varepsilon } \cdot \tilde A} d\Omega} dt} \\
               \le& \int_0^{{r_0}} {\int_M {\left( {\int_{ - 1}^1 {(\vert {\tilde A} \vert + \vert {{\partial _z}\tilde A} \vert)dz} } \right)\left( {\int_{ - 1}^1 {\vert {{{\tilde U}_\varepsilon }} \vert\vert {{\nabla _H}{{\tilde V}_\varepsilon }} \vert dz} } \right)dxdy} dt} \\
               \le& C\int_0^{{r_0}} {\left( {\Vert {\tilde A} \Vert_2^2 + {{\Vert {\tilde A} \Vert}_2}{{\Vert {{\nabla _H}\tilde A} \Vert}_2} + \Vert {\tilde A} \Vert_2^4 + \Vert {\tilde A} \Vert_2^2\Vert {{\nabla _H}\tilde A} \Vert_2^2} \right)\Vert {{{\tilde U}_\varepsilon }} \Vert_2^2dt} \\
               &+ C\int_0^{{r_0}} {\left( {\Vert {{\partial _z}\tilde A} \Vert_2^2 + {{\Vert {{\partial _z}\tilde A} \Vert}_2}{{\Vert {{\nabla _H}{\partial _z}\tilde A} \Vert}_2} + \Vert {{\partial _z}\tilde A} \Vert_2^4 + \Vert {{\partial _z}\tilde A} \Vert_2^2\Vert {{\nabla _H}{\partial _z}\tilde A} \Vert_2^2} \right)\Vert {{{\tilde U}_\varepsilon }} \Vert_2^2dt} \\
               &+ \frac{1}{{144}}\int_0^{{r_0}} {\left( {\Vert {{\nabla _H}{{\tilde U}_\varepsilon }} \Vert_2^2 + \Vert {{\nabla _H}{{\tilde V}_\varepsilon }} \Vert_2^2} \right)dt},
\end{aligned}
\end{equation}
and
\begin{equation} \label{3.11}
\begin{aligned}
             {I_{13}} =& \int_0^{{r_0}} {\int_\Omega  {\left[ {({{\tilde U}_\varepsilon } \cdot {\partial _z}\tilde A)\int_0^z {({\nabla _H} \cdot {{\tilde V}_\varepsilon })d\xi } } \right]d\Omega} dt} \\
             \le& \int_0^{{r_0}} {\int_M {\left( {\int_{ - 1}^1 {\vert {{\nabla _H}{{\tilde V}_\varepsilon }} \vert dz} } \right)\left( {\int_{ - 1}^1 {\vert {{{\tilde U}_\varepsilon }} \vert\vert {{\partial _z}\tilde A} \vert dz} } \right)dxdy} dt} \\
             \le& C\int_0^{{r_0}} {\left( {{{\Vert {{\partial _z}\tilde A} \Vert}_2} + \Vert {{\partial _z}\tilde A} \Vert_2^{\frac{1}{2}}\Vert {{\nabla _H}{\partial _z}\tilde A} \Vert_2^{\frac{1}{2}}} \right)\Vert {{{\tilde U}_\varepsilon }} \Vert_2^{\frac{1}{2}}\Vert {{\nabla _H}{{\tilde U}_\varepsilon }} \Vert_2^{\frac{1}{2}}{{\Vert {{\nabla _H}{{\tilde V}_\varepsilon }} \Vert}_2}dt} \\
             &+ C\int_0^{{r_0}} {{{\Vert {{{\tilde U}_\varepsilon }} \Vert}_2}{{\Vert {{\nabla _H}{{\tilde V}_\varepsilon }} \Vert}_2}\left( {{{\Vert {{\partial _z}\tilde A} \Vert}_2} + \Vert {{\partial _z}\tilde A} \Vert_2^{\frac{1}{2}}\Vert {{\nabla _H}{\partial _z}\tilde A} \Vert_2^{\frac{1}{2}}} \right)dt} \\
             \le& C\int_0^{{r_0}} {\left( {\Vert {{\partial _z}\tilde A} \Vert_2^2 + {{\Vert {{\partial _z}\tilde A} \Vert}_2}{{\Vert {{\nabla _H}{\partial _z}\tilde A} \Vert}_2} + \Vert {{\partial _z}\tilde A} \Vert_2^4 + \Vert {{\partial _z}\tilde A} \Vert_2^2\Vert {{\nabla _H}{\partial _z}\tilde A} \Vert_2^2} \right)\Vert {{{\tilde U}_\varepsilon }} \Vert_2^2dt} \\
             &+ \frac{1}{{144}}\int_0^{{r_0}} {\left( {\Vert {{\nabla _H}{{\tilde U}_\varepsilon }} \Vert_2^2 + \Vert {{\nabla _H}{{\tilde V}_\varepsilon }} \Vert_2^2} \right)dt}.
\end{aligned}
\end{equation}
Repeating the similar processes as the terms ${I_{11}}$, ${I_{12}}$ and ${I_{13}}$ on the right-hand side of \eqref{3.8}, we deduce from the terms ${I_{14}}$, ${I_{15}}$ and ${I_{16}}$ that
\begin{equation} \label{3.12}
\begin{aligned}
              {I_{14}} =& \int_0^{{r_0}} {\int_\Omega  { {({{\tilde U}_\varepsilon } \cdot {\nabla _H}){{\tilde V}_\varepsilon } \cdot \tilde B} d\Omega } dt} \\
              \le& C\int_0^{{r_0}} {\left( {\Vert {\tilde B} \Vert_2^2 + {{\Vert {\tilde B} \Vert}_2}{{\Vert {{\nabla _H}\tilde B} \Vert}_2} + \Vert {\tilde B} \Vert_2^4 + \Vert {\tilde B} \Vert_2^2\Vert {{\nabla _H}\tilde B} \Vert_2^2} \right)\Vert {{{\tilde U}_\varepsilon }} \Vert_2^2dt} \\
              &+ C\int_0^{{r_0}} {\left( {\Vert {{\partial _z}\tilde B} \Vert_2^2 + {{\Vert {{\partial _z}\tilde B} \Vert}_2}{{\Vert {{\nabla _H}{\partial _z}\tilde B} \Vert}_2} + \Vert {{\partial _z}\tilde B} \Vert_2^4 + \Vert {{\partial _z}\tilde B} \Vert_2^2\Vert {{\nabla _H}{\partial _z}\tilde B} \Vert_2^2} \right)\Vert {{{\tilde U}_\varepsilon }} \Vert_2^2dt} \\
              &+ \frac{1}{{144}}\int_0^{{r_0}} {\left( {\Vert {{\nabla _H}{{\tilde U}_\varepsilon }} \Vert_2^2 + \Vert {{\nabla _H}{{\tilde V}_\varepsilon }} \Vert_2^2} \right)dt},
\end{aligned}
\end{equation}
\begin{equation} \label{3.13}
\begin{aligned}
               {I_{15}} =& \int_0^{{r_0}} {\int_\Omega  { {({\nabla _H} \cdot {{\tilde U}_\varepsilon }){{\tilde V}_\varepsilon } \cdot \tilde B} d\Omega} dt} \\
               \le& C\int_0^{{r_0}} {\left( {\Vert {\tilde B} \Vert_2^2 + {{\Vert {\tilde B} \Vert}_2}{{\Vert {{\nabla _H}\tilde B} \Vert}_2} + \Vert {\tilde B} \Vert_2^4 + \Vert {\tilde B} \Vert_2^2\Vert {{\nabla _H}\tilde B} \Vert_2^2} \right)\Vert {{{\tilde V}_\varepsilon }} \Vert_2^2dt} \\
               &+ C\int_0^{{r_0}} {\left( {\Vert {{\partial _z}\tilde B} \Vert_2^2 + {{\Vert {{\partial _z}\tilde B} \Vert}_2}{{\Vert {{\nabla _H}{\partial _z}\tilde B} \Vert}_2} + \Vert {{\partial _z}\tilde B} \Vert_2^4 + \Vert {{\partial _z}\tilde B} \Vert_2^2\Vert {{\nabla _H}{\partial _z}\tilde B} \Vert_2^2} \right)\Vert {{{\tilde V}_\varepsilon }} \Vert_2^2dt} \\
               &+ \frac{1}{{144}}\int_0^{{r_0}} {\left( {\Vert {{\nabla _H}{{\tilde U}_\varepsilon }} \Vert_2^2 + \Vert {{\nabla _H}{{\tilde V}_\varepsilon }} \Vert_2^2} \right)dt},
\end{aligned}
\end{equation}
and
\begin{equation} \label{3.14}
\begin{aligned}
             {I_{16}} =& \int_0^{{r_0}} {\int_\Omega  {\left[ {({{\tilde V}_\varepsilon } \cdot {\partial _z}\tilde B)\int_0^z {({\nabla _H} \cdot {{\tilde U}_\varepsilon })d\xi } } \right]d\Omega} dt} \\
             \le& C\int_0^{{r_0}} {\left( {\Vert {{\partial _z}\tilde B} \Vert_2^2 + {{\Vert {{\partial _z}\tilde B} \Vert}_2}{{\Vert {{\nabla _H}{\partial _z}\tilde B} \Vert}_2} + \Vert {{\partial _z}\tilde B} \Vert_2^4 + \Vert {{\partial _z}\tilde B} \Vert_2^2\Vert {{\nabla _H}{\partial _z}\tilde B} \Vert_2^2} \right)\Vert {{{\tilde V}_\varepsilon }} \Vert_2^2dt} \\
             &+ \frac{1}{{144}}\int_0^{{r_0}} {\left( {\Vert {{\nabla _H}{{\tilde U}_\varepsilon }} \Vert_2^2 + \Vert {{\nabla _H}{{\tilde V}_\varepsilon }} \Vert_2^2} \right)dt}.
\end{aligned}
\end{equation}
Hence, collecting the estimates \eqref{3.9}-\eqref{3.14}, we conclude from the first integral term $I_1$ that
\begin{equation} \label{3.15}
   \begin{aligned}
              {I_1}\le& C\int_0^{{r_0}} {\left( {\Vert {\tilde A} \Vert_2^2 + {{\Vert {\tilde A} \Vert}_2}{{\Vert {{\nabla _H}\tilde A} \Vert}_2} + \Vert {\tilde A} \Vert_2^4 + \Vert {\tilde A} \Vert_2^2\Vert {{\nabla _H}\tilde A} \Vert_2^2} \right)\left( {\Vert {{{\tilde U}_\varepsilon }} \Vert_2^2 + \Vert {{{\tilde V}_\varepsilon }} \Vert_2^2} \right)dt} \\
              &+ C\int_0^{{r_0}} {\left( {\Vert {{\partial _z}\tilde A} \Vert_2^2 + {{\Vert {{\partial _z}\tilde A} \Vert}_2}{{\Vert {{\nabla _H}{\partial _z}\tilde A} \Vert}_2} + \Vert {{\partial _z}\tilde A} \Vert_2^4 + \Vert {{\partial _z}\tilde A} \Vert_2^2\Vert {{\nabla _H}{\partial _z}\tilde A} \Vert_2^2} \right)\left( {\Vert {{{\tilde U}_\varepsilon }} \Vert_2^2 + \Vert {{{\tilde V}_\varepsilon }} \Vert_2^2} \right)dt}\\
              &+ C\int_0^{{r_0}} {\left( {\Vert {\tilde B} \Vert_2^2 + {{\Vert {\tilde B} \Vert}_2}{{\Vert {{\nabla _H}\tilde B} \Vert}_2} + \Vert {\tilde B} \Vert_2^4 + \Vert {\tilde B} \Vert_2^2\Vert {{\nabla _H}\tilde B} \Vert_2^2} \right)\left( {\Vert {{{\tilde U}_\varepsilon }} \Vert_2^2 + \Vert {{{\tilde V}_\varepsilon }} \Vert_2^2} \right)dt} \\
              &+ C\int_0^{{r_0}} {\left( {\Vert {{\partial _z}\tilde B} \Vert_2^2 + {{\Vert {{\partial _z}\tilde B} \Vert}_2}{{\Vert {{\nabla _H}{\partial _z}\tilde B} \Vert}_2} + \Vert {{\partial _z}\tilde B} \Vert_2^4 + \Vert {{\partial _z}\tilde B} \Vert_2^2\Vert {{\nabla _H}{\partial _z}\tilde B} \Vert_2^2} \right)\left( {\Vert {{{\tilde U}_\varepsilon }} \Vert_2^2 + \Vert {{{\tilde V}_\varepsilon }} \Vert_2^2} \right)dt}\\
              &+ \frac{1}{{24}}\int_0^{{r_0}} {\left( {\Vert {{\nabla _H}{{\tilde U}_\varepsilon }} \Vert_2^2 + \Vert {{\nabla _H}{{\tilde V}_\varepsilon }} \Vert_2^2} \right)dt}.
\end{aligned}
\end{equation}
With the aid of H\"older's  inequality and Young's inequality, the sum of the integral terms ${I_2}$ and ${I_3}$ can be bounded as
\begin{equation} \label{3.16}
\begin{aligned}
                {I_2} + {I_3} =& {\varepsilon ^2}\int_0^{{r_0}} {\int_\Omega  {\left[ {\left( { - \int_0^z {{\partial _t}\tilde A(x,y,\xi ,t)d\xi } } \right) \cdot {\nabla _H}{U_{3,\varepsilon }}} \right]d\Omega } dt} \\
                &+ {\varepsilon ^2}\int_0^{{r_0}} {\int_\Omega  {\left[ {\left( { - \int_0^z {{\partial _t}\tilde B(x,y,\xi ,t)d\xi } } \right) \cdot {\nabla _H}{V_{3,\varepsilon }}} \right]d\Omega } dt} \\
                \le& {\varepsilon ^2}\int_0^{{r_0}} {\int_M {\left( {\int_{ - 1}^1 {\vert {{\partial _t}\tilde A} \vert dz} } \right)\left( {\int_{ - 1}^1 {\left| {{\nabla _H}{U_{3,\varepsilon }}} \right|dz} } \right)} dt} \\
                &+ {\varepsilon ^2}\int_0^{{r_0}} {\int_M {\left( {\int_{ - 1}^1 {\vert {{\partial _t}\tilde B} \vert dz} } \right)\left( {\int_{ - 1}^1 {\vert {{\nabla _H}{V_{3,\varepsilon }}} \vert dz} } \right)} dt} \\
                \le& C{\varepsilon ^2}\int_0^{{r_0}} {\left( {\Vert {{\partial _t}\tilde A} \Vert_2^2 + \Vert {{\partial _t}\tilde B} \Vert_2^2} \right)dt}  + \frac{1}{{24}}\int_0^{{r_0}} {{\varepsilon ^2}\left( {\Vert {{\nabla _H}{U_{3,\varepsilon }}} \Vert_2^2 + \Vert {{\nabla _H}{V_{3,\varepsilon }}} \Vert_2^2} \right)dt}.
\end{aligned}
\end{equation}
Employing the incompressible conditions, and using H\"older's inequality and Young's inequality, the fourth integral term ${I_4}$ can be estimated as
\begin{equation} \label{3.17}
\begin{aligned}
              {I_4} =& \int_0^{{r_0}} {\int_\Omega  {( - {\varepsilon ^2}{\nabla _H}{U_{3,\varepsilon }} \cdot {\nabla _H}{A_3} - {\varepsilon ^2}{\nabla _H}{V_{3,\varepsilon }} \cdot {\nabla _H}{B_3})d\Omega} dt} \\
              \le& {\varepsilon ^2}\int_0^{{r_0}} {\left( {\Vert {{\nabla _H}{U_{3,\varepsilon }}} \Vert_2\Vert {{\nabla _H}{A_3}} \Vert_2 + \Vert {{\nabla _H}{V_{3,\varepsilon }}} \Vert_2\Vert {{\nabla _H}{B_3}} \Vert_2} \right)dt} \\
              \le& C{\varepsilon ^2}\int_0^{{r_0}} {\left( {\Vert {{\nabla _H}{A_3}} \Vert_2^2 + \Vert {{\nabla _H}{B_3}} \Vert_2^2} \right)dt}  + \frac{1}{{24}}\int_0^{{r_0}} {{\varepsilon ^2}\left( {\Vert {{\nabla _H}{U_{3,\varepsilon }}} \Vert_2^2 + \Vert {{\nabla _H}{V_{3,\varepsilon }}} \Vert_2^2} \right)dt} \\
              \le& C{\varepsilon ^2}\int_0^{{r_0}} {{\int_\Omega  {\vert {{\nabla _H}({\nabla _H} \cdot \tilde A)} \vert}^2 d\Omega}dt}  + C{\varepsilon ^2}\int_0^{{r_0}} {{\int_\Omega  {\vert {{\nabla _H}({\nabla _H} \cdot \tilde B)} \vert}^2 d\Omega}dt} \\
              &+ \frac{1}{{24}}\int_0^{{r_0}} {{\varepsilon ^2}\left( {\Vert {{\nabla _H}{U_{3,\varepsilon }}} \Vert_2^2 + \Vert {{\nabla _H}{V_{3,\varepsilon }}} \Vert_2^2} \right)dt} \\
              \le& C{\varepsilon ^2}\int_0^{{r_0}} {\left( {\Vert {\nabla _H^2\tilde A} \Vert_2^2 + \Vert {\nabla _H^2\tilde B} \Vert_2^2} \right)dt}  + \frac{1}{{24}}\int_0^{{r_0}} {{\varepsilon ^2}\left( {\Vert {{\nabla _H}{U_{3,\varepsilon }}} \Vert_2^2 + \Vert {{\nabla _H}{V_{3,\varepsilon }}} \Vert_2^2} \right)dt},
\end{aligned}
\end{equation}
and the fifth integral term ${I_5}$ can be estimated as
\begin{equation} \label{3.18}
\begin{aligned}
                {I_5} =& \int_0^{{r_0}} {\int_\Omega  {( - {\varepsilon ^{\alpha  - 2}}{\partial _z}{{\tilde U}_\varepsilon } \cdot {\partial _z}\tilde A - {\varepsilon ^\alpha }{\partial _z}{U_{3,\varepsilon }}{\partial _z}{A_3} - {\varepsilon ^{\alpha  - 2}}{\partial _z}{{\tilde V}_\varepsilon } \cdot {\partial _z}\tilde B - {\varepsilon ^\alpha }{\partial _z}{V_{3,\varepsilon }}{\partial _z}{B_3})d\Omega } dt}\\
                \le& {\varepsilon ^{\alpha  - 2}}\int_0^{{r_0}} {\left( {{{\Vert {{\partial _z}{{\tilde U}_\varepsilon }} \Vert}_2}{{\Vert {{\partial _z}\tilde A} \Vert}_2} + {{\Vert {{\partial _z}{{\tilde V}_\varepsilon }} \Vert}_2}{{\Vert {{\partial _z}\tilde B} \Vert}_2}} \right)dt} \\
                &+ {\varepsilon ^\alpha }\int_0^{{r_0}} {\left( {{{\Vert {{\partial _z}{U_{3,\varepsilon }}} \Vert}_2}{{\Vert {{\partial _z}{A_3}} \Vert}_2} + {{\Vert {{\partial _z}{V_{3,\varepsilon }}} \Vert}_2}{{\Vert {{\partial _z}{B_3}} \Vert}_2}} \right)dt} \\
                \le& C{\varepsilon ^{\alpha  - 2}}\int_0^{{r_0}} {\left( {\Vert {{\partial _z}\tilde A} \Vert_2^2 + \Vert {{\partial _z}\tilde B} \Vert_2^2} \right)dt}  + \frac{1}{{24}}\int_0^{{r_0}} {{\varepsilon ^{\alpha  - 2}}\left( {\Vert {{\partial _z}{{\tilde U}_\varepsilon }} \Vert_2^2 + \Vert {{\partial _z}{{\tilde V}_\varepsilon }} \Vert_2^2} \right)dt} \\
                &+ C{\varepsilon ^\alpha }\int_0^{{r_0}} {\left( {\Vert {{\partial _z}{A_3}} \Vert_2^2 + \Vert {{\partial _z}{B_3}} \Vert_2^2} \right)dt}  + \frac{1}{{24}}\int_0^{{r_0}} {{\varepsilon ^\alpha }\left( {\Vert {{\partial _z}{U_{3,\varepsilon }}} \Vert_2^2 + \Vert {{\partial _z}{V_{3,\varepsilon }}} \Vert_2^2} \right)dt} \\
                \le& C{\varepsilon ^{\alpha  - 2}}\int_0^{{r_0}} {\left( {\Vert {\nabla \tilde A} \Vert_2^2 + \Vert {\nabla \tilde B} \Vert_2^2} \right)dt}  + C{\varepsilon ^\alpha }\int_0^{{r_0}} {\left( {\Vert {{\nabla _H}\tilde A} \Vert_2^2 + \Vert {{\nabla _H}\tilde B} \Vert_2^2} \right)dt} \\
                &+ \frac{1}{{24}}\int_0^{{r_0}} {{\varepsilon ^{\alpha  - 2}}\left( {\Vert {{\partial _z}{{\tilde U}_\varepsilon }} \Vert_2^2 + \Vert {{\partial _z}{{\tilde V}_\varepsilon }} \Vert_2^2} \right)dt}  + \frac{1}{{24}}\int_0^{{r_0}} {{\varepsilon ^\alpha }\left( {\Vert {{\partial _z}{U_{3,\varepsilon }}} \Vert_2^2 + \Vert {{\partial _z}{V_{3,\varepsilon }}} \Vert_2^2} \right)dt}.
\end{aligned}
\end{equation}
By means of the incompressibility constraints, Lemma \ref{lem3.1} and Young's inequality again, the last integral term ${I_6}$ on the right-hand side of \eqref{3.7} can be controlled by
\begin{equation} \label{3.19}
\begin{aligned}
               {I_6} =& {\varepsilon ^2}\int_0^{{r_0}} {\int_\Omega  {\left[ {({B_\varepsilon } \cdot \nabla {A_{3,\varepsilon }}){A_3} + ({A_\varepsilon } \cdot \nabla {B_{3,\varepsilon }}){B_3}} \right]d\Omega} dt} \\
               =& {\varepsilon ^2}\int_0^{{r_0}} {\int_\Omega  {\left[ {{B_{3,\varepsilon }}({\nabla _H} \cdot {{\tilde U}_\varepsilon })\int_0^z {({\nabla _H} \cdot \tilde A})d\xi- {{\tilde B}_\varepsilon } \cdot {\nabla _H}{U_{3,\varepsilon }} \int_0^z ({{\nabla _H} \cdot \tilde A} )d\xi} \right]d\Omega} dt} \\
               &+ {\varepsilon ^2}\int_0^{{r_0}} {\int_\Omega  {\left[ {{A_{3,\varepsilon }}({\nabla _H} \cdot {{\tilde V}_\varepsilon } )\int_0^z ({{\nabla _H} \cdot \tilde B})d\xi - {{\tilde A}_\varepsilon } \cdot {\nabla _H}{V_{3,\varepsilon }}\int_0^z ({{\nabla _H} \cdot \tilde B})d\xi} \right]d\Omega} dt} \\
               \le& C{\varepsilon ^2}\int_0^{{r_0}} {\left( {\Vert {{\nabla _H}\tilde A} \Vert_2^2\Vert {\nabla _H^2\tilde A} \Vert_2^2 + {\varepsilon ^4}\Vert {{A_{3,\varepsilon }}} \Vert_2^4 + {\varepsilon ^4}\Vert {{A_{3,\varepsilon }}} \Vert_2^2\Vert {{\nabla _H}{A_{3,\varepsilon }}} \Vert_2^2} \right)dt} \\
               &+ C{\varepsilon ^2}\int_0^{{r_0}} {\left( {\Vert {{\nabla _H}\tilde B} \Vert_2^2\Vert {\nabla _H^2\tilde B} \Vert_2^2 + {\varepsilon ^4}\Vert {{B_{3,\varepsilon }}} \Vert_2^4 + {\varepsilon ^4}\Vert {{B_{3,\varepsilon }}} \Vert_2^2\Vert {{\nabla _H}{B_{3,\varepsilon }}} \Vert_2^2} \right)dt} \\
               &+ C{\varepsilon ^2}\int_0^{{r_0}} {\left( {\Vert {{{\tilde A}_\varepsilon }} \Vert_2^4 + \Vert {{{\tilde A}_\varepsilon }} \Vert_2^2\Vert {{\nabla _H}{{\tilde A}_\varepsilon }} \Vert_2^2 + \Vert {{{\tilde B}_\varepsilon }} \Vert_2^4 + \Vert {{{\tilde B}_\varepsilon }} \Vert_2^2\Vert {{\nabla _H}{{\tilde B}_\varepsilon }} \Vert_2^2} \right)dt} \\
               &+ \frac{1}{{24}}\int_0^{{r_0}} {\left( {\Vert {{\nabla _H}{{\tilde U}_\varepsilon }} \Vert_2^2 + {\varepsilon ^2}\Vert {{\nabla _H}{U_{3,\varepsilon }}} \Vert_2^2 + \Vert {{\nabla _H}{{\tilde V}_\varepsilon }} \Vert_2^2 + {\varepsilon ^2}\Vert {{\nabla _H}{V_{3,\varepsilon }}} \Vert_2^2} \right)dt}.
\end{aligned}
\end{equation}
Plugging the estimates of all integral terms ${I_1}$-${I_6}$ into \eqref{3.7}, we arrive at
\begin{equation*}
      \begin{aligned}
              m(t): =& \left( {\Vert {{{\tilde U}_\varepsilon }(t)} \Vert_2^2 + {\varepsilon ^2}\Vert {{U_{3,\varepsilon }}(t)} \Vert_2^2 + \Vert {{{\tilde V}_\varepsilon }(t)} \Vert_2^2 + {\varepsilon ^2}\Vert {{V_{3,\varepsilon }}(t)} \Vert_2^2}\right) \\
              &+ \int_0^t {\left( {\Vert {{\nabla _H}{{\tilde U}_\varepsilon }} \Vert_2^2 + {\varepsilon ^{\alpha  - 2}}\Vert {{\partial _z}{{\tilde U}_\varepsilon }} \Vert_2^2 + \Vert {{\nabla _H}{{\tilde V}_\varepsilon }} \Vert_2^2 + {\varepsilon ^{\alpha  - 2}}\Vert {{\partial _z}{{\tilde V}_\varepsilon }} \Vert_2^2} \right)ds} \\
              &+ \int_0^t {\left( {{\varepsilon ^2}\Vert {{\nabla _H}{U_{3,\varepsilon }}} \Vert_2^2 + {\varepsilon ^\alpha }\Vert {{\partial _z}{U_{3,\varepsilon }}} \Vert_2^2 + {\varepsilon ^2}\Vert {{\nabla _H}{V_{3,\varepsilon }}} \Vert_2^2 + {\varepsilon ^\alpha }\Vert {{\partial _z}{V_{3,\varepsilon }}} \Vert_2^2} \right)ds}\\
              \le& C\int_0^t {{Y_1}(s)X(s)ds}  + C\int_0^t {{Y_2}(s)X(s)ds}  + C\int_0^t {{Y_3}(s)ds} \\
              & + C\int_0^t {{Y_4}(s)ds} + C\int_0^t {{Y_5}(s)ds} + C\int_0^t {{Y_6}(s)ds}  + C\int_0^t {{Y_7}(s)ds} := M(t),
      \end{aligned}
\end{equation*}
for a.e. $t \in [0,t_1^ * )$, where we define
\begin{equation*}
      \begin{aligned}
                {X}(t):=& {\Vert {{{\tilde U}_\varepsilon }} \Vert_2^2 + \Vert {{{\tilde V}_\varepsilon }} \Vert_2^2},\\
                {Y_1}(t):=&  {\Vert {\tilde A} \Vert_2^2 + {{\Vert {\tilde A} \Vert}_2}{{\Vert {{\nabla _H}\tilde A} \Vert}_2} + \Vert {\tilde A} \Vert_2^4 + \Vert {\tilde A} \Vert_2^2\Vert {{\nabla _H}\tilde A} \Vert_2^2}\\
                &+ {\Vert {\tilde B} \Vert_2^2 + {{\Vert {\tilde B} \Vert}_2}{{\Vert {{\nabla _H}\tilde B} \Vert}_2} + \Vert {\tilde B} \Vert_2^4 + \Vert {\tilde B} \Vert_2^2\Vert {{\nabla _H}\tilde B} \Vert_2^2},\\
                {Y_2}(t):=& {\Vert {{\partial _z}\tilde A} \Vert_2^2 + {{\Vert {{\partial _z}\tilde A} \Vert}_2}{{\Vert {{\nabla _H}{\partial _z}\tilde A} \Vert}_2} + \Vert {{\partial _z}\tilde A} \Vert_2^4 + \Vert {{\partial _z}\tilde A} \Vert_2^2\Vert {{\nabla _H}{\partial _z}\tilde A} \Vert_2^2}\\
                &+{\Vert {{\partial _z}\tilde B} \Vert_2^2 + {{\Vert {{\partial _z}\tilde B} \Vert}_2}{{\Vert {{\nabla _H}{\partial _z}\tilde B} \Vert}_2} + \Vert {{\partial _z}\tilde B} \Vert_2^4 + \Vert {{\partial _z}\tilde B} \Vert_2^2\Vert {{\nabla _H}{\partial _z}\tilde B} \Vert_2^2},\\
                {Y_3}(t):=& {\varepsilon ^2}\left({\Vert {{\partial _t}\tilde A} \Vert_2^2 + \Vert {\nabla _H^2\tilde A} \Vert_2^2 + \Vert {{\partial _t}\tilde B} \Vert_2^2 + \Vert {\nabla _H^2\tilde B} \Vert_2^2}\right),\\
                {Y_4}(t):=& {\varepsilon ^{\alpha  - 2}}\left({\Vert {\nabla \tilde A} \Vert_2^2 + \Vert {\nabla \tilde B} \Vert_2^2}\right),\\
                {Y_5}(t):=& {\varepsilon ^\alpha }\left({\Vert {{\nabla _H}\tilde A} \Vert_2^2 + \Vert {{\nabla _H}\tilde B} \Vert_2^2}\right),\\
                {Y_6}(t):=& {\varepsilon ^2}\left( {\Vert {{\nabla _H}\tilde A} \Vert_2^2\Vert {\nabla _H^2\tilde A} \Vert_2^2 + {\varepsilon ^4}\Vert {{A_{3,\varepsilon }}} \Vert_2^4 + {\varepsilon ^4}\Vert {{A_{3,\varepsilon }}} \Vert_2^2\Vert {{\nabla _H}{A_{3,\varepsilon }}} \Vert_2^2} \right.\\
                &\left. {+ {\Vert {{\nabla _H}\tilde B} \Vert_2^2\Vert {\nabla _H^2\tilde B} \Vert_2^2 + {\varepsilon ^4}\Vert {{B_{3,\varepsilon }}} \Vert_2^4 + {\varepsilon ^4}\Vert {{B_{3,\varepsilon }}} \Vert_2^2\Vert {{\nabla _H}{B_{3,\varepsilon }}} \Vert_2^2}} \right),\\
                {Y_7}(t):=& {\varepsilon ^2}\left({\Vert {{{\tilde A}_\varepsilon }} \Vert_2^4 + \Vert {{{\tilde A}_\varepsilon }} \Vert_2^2\Vert {{\nabla _H}{{\tilde A}_\varepsilon }} \Vert_2^2 + \Vert {{{\tilde B}_\varepsilon }} \Vert_2^4 + \Vert {{{\tilde B}_\varepsilon }} \Vert_2^2\Vert {{\nabla _H}{{\tilde B}_\varepsilon }} \Vert_2^2}\right).
      \end{aligned}
\end{equation*}
After applying ${\partial _t}$ to $M(t)$ and using the inequality $X(t) < m(t) \le M(t)$, we have that
\begin{equation*}
\begin{aligned}
               M'(t) =& C\left[ {{Y_1}(t) + {Y_2}(t)} \right]X(t) + C\left[ {{Y_3}(t) + {Y_4}(t) + {Y_5}(t) + {Y_6}(t) + {Y_7}(t)} \right]\\
               \le& C\left[ {{Y_1}(t) + {Y_2}(t)} \right]M(t) + C\left[ {{Y_3}(t) + {Y_4}(t) + {Y_5}(t) + {Y_6}(t) + {Y_7}(t)} \right],
    \end{aligned}
\end{equation*}
and thus it follows from Grönwall's inequality that
\begin{equation} \label{3.20}
                   m(t) \le C\exp \left\{ {C\int_0^t {\left[ {{Y_1}(s) + {Y_2}(s)} \right]ds} } \right\}\int_0^t {\left[ {{Y_3}(s) + {Y_4}(s) + {Y_5}(s) + {Y_6}(s) + {Y_7}(s)} \right]ds},
\end{equation}
where we made use of the fact that ${\left. M \right|_{t = 0}} = 0$. Summarily, combining \eqref{3.1} and \eqref{3.6}, we obtain the desired estimate \eqref{3.3}. This completes the proof of Proposition \ref{prop3.4}.
\end{proof}

\begin{proof2.3}
Let ${ t_1^ * }$ be the maximal existence time of the local-in-time strong solution to system \eqref{2.10}. On the basis of Proposition \ref{prop3.4}, we can choose $\gamma = \min \{ 2,\alpha - 2\}$ with $\alpha \in (2,\infty )$ such that
\begin{equation*}
    \begin{aligned}
                \mathop {\sup }\limits_{0 \le t < t_1^ * }& \left( {\Vert {({{\tilde U}_\varepsilon },\varepsilon {U_{3,\varepsilon }},{{\tilde V}_\varepsilon },\varepsilon {V_{3,\varepsilon }})} \Vert_2^2} \right)(t)\\
                &+ \int_0^{t_1^ *} {\left( {\Vert {{\nabla _H}{{\tilde U}_\varepsilon }} \Vert_2^2 + {\varepsilon ^{\alpha  - 2}}\Vert {{\partial _z}{{\tilde U}_\varepsilon }} \Vert_2^2 + \Vert {{\nabla _H}{{\tilde V}_\varepsilon }} \Vert_2^2 + {\varepsilon ^{\alpha  - 2}}\Vert {{\partial _z}{{\tilde V}_\varepsilon }} \Vert_2^2} \right)ds} \\
                &+ \int_0^{t_1^ *} {\left( {{\varepsilon ^2}\Vert {{\nabla _H}{U_{3,\varepsilon }}} \Vert_2^2 + {\varepsilon ^\alpha }\Vert {{\partial _z}{U_{3,\varepsilon }}} \Vert_2^2 + {\varepsilon ^2}\Vert {{\nabla _H}{V_{3,\varepsilon }}} \Vert_2^2 + {\varepsilon ^\alpha }\Vert {{\partial _z}{V_{3,\varepsilon }}} \Vert_2^2} \right)ds} \\
                \le C{\varepsilon ^\gamma }&(t_1^ *  + 1){e^{C(t_1^ *  + 1)}}\left[ {1 + {\left( {\Vert {{{\tilde A}_0}} \Vert_2^2 + \Vert {{A_{3,0}}} \Vert_2^2 + \Vert {{{\tilde B}_0}} \Vert_2^2 + \Vert {{B_{3,0}}} \Vert_2^2} \right)^2}} \right],
    \end{aligned}
\end{equation*}
for some positive constant $C$ independent of ${\varepsilon }$, which implies that
\begin{equation*}
    \begin{aligned}
               &({{\tilde A}_\varepsilon },\varepsilon {A_{3,\varepsilon }},{{\tilde B}_\varepsilon },\varepsilon {B_{3,\varepsilon }}) \to (\tilde A,0,\tilde B,0)\;\;{\rm{in}}\;{L^\infty }([0,t_1^*);{L^2}(\Omega )),\\
               &({\nabla _H}{{\tilde A}_\varepsilon },{\varepsilon ^{(\alpha  - 2)/2}}{\partial _z}{{\tilde A}_\varepsilon },\varepsilon {\nabla _H}{A_{3,\varepsilon }},{\varepsilon ^{\alpha /2}}{\partial _z}{A_{3,\varepsilon }}) \to ({\nabla _H}\tilde A,0,0,0)\;\;{\rm{in}}\;{L^2}([0,t_1^*);{L^2}(\Omega )),\\
               &({\nabla _H}{{\tilde B}_\varepsilon },{\varepsilon ^{(\alpha  - 2)/2}}{\partial _z}{{\tilde B}_\varepsilon },\varepsilon {\nabla _H}{B_{3,\varepsilon }},{\varepsilon ^{\alpha /2}}{\partial _z}{B_{3,\varepsilon }}) \to ({\nabla _H}\tilde B,0,0,0)\;\;{\rm{in}}\;{L^2}([0,t_1^*);{L^2}(\Omega )).
    \end{aligned}
\end{equation*}
By virtue of the strong convergence
\begin{equation*}
               ({\nabla _H}{{\tilde A}_\varepsilon },{\nabla _H}{{\tilde B}_\varepsilon }) \to ({\nabla _H}\tilde A,{\nabla _H}\tilde B)\;\;{\rm{in}}\;{L^2}([0,t_1^*);{L^2}(\Omega ))
\end{equation*}
and the divergence-free properties, we can conclude that
\begin{equation*}
              ({A_{3,\varepsilon }},{B_{3,\varepsilon }}) \to ({A_3},{B_3})\;\;{\rm{in}}\;{L^2}([0,t_1^*);{L^2}(\Omega )).
\end{equation*}
In addition, it is clear that the rate of convergence is of the order $O({\varepsilon ^{\gamma /2}})$.
\end{proof2.3}

\subsection{The $H^1$-initial data with additional regularity}
In this subsection, we aim at proving that if the initial pair $({{\tilde A}_0},{{\tilde B}_0}) \in {H^1}(\Omega )$ has additional regularity $({\partial _z}{{\tilde A}_0},{\partial _z}{{\tilde B}_0}) \in {L^p}(\Omega )$, for some $p \in (2,\infty )$, then the local-in-time strong solution $({\tilde A},{\tilde B})$ to system \eqref{2.10} can be extended to be a global-in-time one, and studying the global-in-time strong convergence of the SHMHD equations to the PEHM as $\varepsilon$ tends to zero.

\begin{proposition}\label{prop3.5}
Suppose that a pair of periodic function $({{\tilde A}_0},{{\tilde B}_0}) \in {H^1}(\Omega ) \cap {L^\infty }(\Omega )$ with
\begin{equation*}
                 {\nabla _H} \cdot \left( {\int_{ - 1}^1 {{{\tilde A}_0}(x,y,z )dz} } \right) = 0\;{\rm{and}}\;{\nabla _H} \cdot \left( {\int_{ - 1}^1 {{{\tilde B}_0}(x,y,z )dz} } \right) = 0,
\end{equation*}
and that $({\partial _z}{{\tilde A}_0},{\partial _z}{{\tilde B}_0}) \in {L^p}(\Omega )$, for some $p \in (2,\infty )$. Then the local-in-time strong solution $({\tilde A},{\tilde B})$ of system \eqref{2.10} corresponding to \eqref{2.4}-\eqref{2.6} can be extended uniquely to be a global-in-time one which satisfies the following energy estimate
\begin{equation} \label{3.21}
 \begin{aligned}
           \mathop {\sup }\limits_{0 \le s \le t} &\left( {\Vert {(\tilde A,\tilde B)} \Vert_{{H^1}(\Omega )}^2} \right)(s)\\
           &+ \int_0^t {\left( {\Vert {{\nabla _H}\tilde A} \Vert_{{H^1}(\Omega )}^2 + \Vert {{\nabla _H}\tilde B} \Vert_{{H^1}(\Omega )}^2 + \Vert {({\partial _t}\tilde A,{\partial _t}\tilde B)} \Vert_2^2} \right)ds}  \le {N_1}(t),
 \end{aligned}
\end{equation}
for any $t\in [0,\infty)$, where $N(t)$ is a no-negative continuous increasing function defined on $[0,\infty)$.
\end{proposition}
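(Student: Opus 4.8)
The plan is to follow the strategy of Cao, Li and Titi \cite{2017-Cao-Li-Titi-Global}, producing a priori bounds that, together with the local well-posedness of Proposition \ref{prop3.2} and a standard continuation argument, rule out finite-time blow-up of the $H^1$ norm. The backbone consists of three successive estimates: a basic $L^2$ energy estimate, a key $L^p$ estimate for the vertical derivatives $\partial_z\tilde A$ and $\partial_z\tilde B$, and finally the $H^1$ estimate. First I would establish the basic energy identity. Testing the $\tilde A$-equation in \eqref{2.10} with $\tilde A$ and the $\tilde B$-equation with $\tilde B$ and integrating over $\Omega$, the pressure term drops out: since $p$ is independent of $z$, one has $\int_\Omega \nabla_H p\cdot\tilde A\,d\Omega=\int_M \nabla_H p\cdot\left(\int_{-1}^1\tilde A\,dz\right)dxdy=0$ after integrating by parts and invoking the barotropic constraint $\nabla_H\cdot\int_{-1}^1\tilde A\,dz=0$, which is propagated in time. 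The transport terms vanish by the incompressibility of $A$ and $B$, yielding
\begin{equation*}
 \frac{d}{dt}\left(\|\tilde A\|_2^2+\|\tilde B\|_2^2\right)+2\left(\|\nabla_H\tilde A\|_2^2+\|\nabla_H\tilde B\|_2^2\right)=0,
\end{equation*}
hence global control of $(\tilde A,\tilde B)$ in $L^\infty([0,\infty);L^2(\Omega))$ and of $(\nabla_H\tilde A,\nabla_H\tilde B)$ in $L^2([0,\infty);L^2(\Omega))$.

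The heart of the proof is the $L^p$ estimate for $\psi:=\partial_z\tilde A$ and $\chi:=\partial_z\tilde B$. Differentiating the $\tilde A$- and $\tilde B$-equations of \eqref{2.10} in $z$ annihilates the pressure, because $\partial_z\nabla_H p=\nabla_H\partial_z p=0$; using $\partial_z A_3=-\nabla_H\cdot\tilde A$ and $\partial_z B_3=-\nabla_H\cdot\tilde B$ from \eqref{2.14}--\eqref{2.15}, one obtains the coupled transport--horizontal-diffusion system
\begin{equation*}
\begin{aligned}
 &\partial_t\psi+(B\cdot\nabla)\psi-\Delta_H\psi=(\nabla_H\cdot\tilde B)\psi-(\chi\cdot\nabla_H)\tilde A,\\
 &\partial_t\chi+(A\cdot\nabla)\chi-\Delta_H\chi=(\nabla_H\cdot\tilde A)\chi-(\psi\cdot\nabla_H)\tilde B.
\end{aligned}
\end{equation*}
Multiplying the first equation by $|\psi|^{p-2}\psi$, the second by $|\chi|^{p-2}\chi$, integrating over $\Omega$ and summing, the transport terms vanish by incompressibility while the horizontal viscosity produces a nonnegative dissipation $c_p\left(\|\nabla_H|\psi|^{p/2}\|_2^2+\|\nabla_H|\chi|^{p/2}\|_2^2\right)$ with $c_p>0$. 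The remaining forcing terms, containing $(\nabla_H\cdot\tilde B)|\psi|^p$, $(\chi\cdot\nabla_H\tilde A)|\psi|^{p-2}\psi$ and their symmetric counterparts, are then controlled by the anisotropic tri-linear inequality of Lemma \ref{lem3.1} together with slicewise interpolation and the $L^\infty$ bound propagated from the hypothesis $(\tilde A_0,\tilde B_0)\in L^\infty(\Omega)$; they are absorbed into a small multiple of the dissipation plus terms of the form $C\left(\|\nabla_H\tilde A\|_2^2+\|\nabla_H\tilde B\|_2^2\right)\left(\|\psi\|_p^p+\|\chi\|_p^p\right)$ and lower-order contributions already controlled by Step~1. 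Since $\|\nabla_H\tilde A\|_2^2+\|\nabla_H\tilde B\|_2^2$ is time-integrable, Gr\"onwall's inequality yields $(\partial_z\tilde A,\partial_z\tilde B)\in L^\infty([0,\infty);L^p(\Omega))$.

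Finally I would close the $H^1$ estimate. With the $L^\infty_tL^p$ bound on $(\partial_z\tilde A,\partial_z\tilde B)$ available, I test the $\tilde A$- and $\tilde B$-equations with $-\Delta_H\tilde A$ and $-\Delta_H\tilde B$, and run the companion $L^2$ estimate for $(\psi,\chi)$ to recover $\nabla_H\partial_z\tilde A,\nabla_H\partial_z\tilde B\in L^2_tL^2$. The only potentially supercritical contributions are those involving the vertical components $A_3=-\int_0^z\nabla_H\cdot\tilde A\,d\xi$ and $B_3$; these are precisely the ones tamed by the $L^p$ vertical-derivative bound combined with the two-dimensional Ladyzhenskaya inequality applied slicewise, which is exactly why the extra integrability is required. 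This produces a differential inequality for $\|\nabla_H\tilde A\|_2^2+\|\nabla_H\tilde B\|_2^2$ (and its $\partial_z$-counterpart) with globally integrable coefficients, giving $(\tilde A,\tilde B)\in L^\infty_tH^1$ and $(\nabla_H\tilde A,\nabla_H\tilde B)\in L^2_tH^1$; the bound on $(\partial_t\tilde A,\partial_t\tilde B)$ in $L^2_tL^2$ then follows from the equations after estimating the pressure via the elliptic problem coming from the divergence constraint, which altogether gives \eqref{3.21}.

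The main obstacle is the $L^p$ estimate of the second step: in the absence of any vertical dissipation, the cross terms coupling the two Els\"asser fields $A$ and $B$ must be absorbed using only the horizontal diffusion, and it is exactly the strict integrability $p>2$ of $(\partial_z\tilde A_0,\partial_z\tilde B_0)$, together with the propagated $L^\infty$ control, that provides the room to close this estimate and to subsequently dominate the nonlinearity in the $H^1$ step. Every other step is a fairly routine energy computation once this bound is secured.
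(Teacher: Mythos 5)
Your three-step scheme (the $L^2$ energy identity, the $L^p$ estimate for $(\partial_z\tilde A,\partial_z\tilde B)$, and the $H^1$ estimate closed with the help of the $p>2$ integrability) is precisely the Cao--Li--Titi strategy that the paper's entire proof of this proposition consists of citing, so in substance you take the same route as the paper and you correctly locate the crux in the second step. There is, however, one concrete point where your sketch would not survive being written out: you close the $L^p$ inequality using ``the $L^\infty$ bound propagated from the hypothesis $(\tilde A_0,\tilde B_0)\in L^\infty(\Omega)$'' together with a Gr\"onwall coefficient of the form $C(\|\nabla_H\tilde A\|_2^2+\|\nabla_H\tilde B\|_2^2)$, and neither ingredient is available as stated. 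System \eqref{2.10} carries the pressure gradient $\nabla_H p$ in the horizontal equations, so there is no maximum principle and $\|\tilde A(t)\|_\infty$ is not known to remain finite; what the $L^\infty$ hypothesis actually yields, via a separate $L^q$ energy iteration that must handle the pressure term, is a family of uniform-in-time $L^q$ bounds with controlled growth in $q$, and this intermediate lemma is missing from your outline. Moreover, applying Lemma \ref{lem3.1} to $\int_\Omega|\nabla_H\tilde A|\,|\partial_z\tilde A|^p\,d\Omega$ forces you to pull $|\nabla_H\tilde A|$ out of the $z$-integral at the cost of $\|\nabla_H\partial_z\tilde A\|_2$ (putting the $z$-sup on $|\partial_z\tilde A|^{p/2}$ instead would require $\partial_{zz}\tilde A$, which you do not have), so the coefficient is $1+\|\nabla_H\tilde A\|_{H^1(\Omega)}^2$ as in \eqref{3.27}; its time-integrability on all of $[0,\infty)$ is part of the conclusion \eqref{3.21} rather than a consequence of your Step 1, so invoking it globally is circular. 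The repair --- and the true role of the $L^\infty$ hypothesis --- is to first establish the $L^q$ bounds on $(\tilde A,\tilde B)$ and then integrate the forcing terms by parts so that the $L^p$ estimate closes with a bounded-in-time coefficient depending on those $L^q$ norms; with that lemma inserted, the rest of your outline goes through as described.
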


\begin{proof}
The proof of the existence of global-in-time strong solutions to system \eqref{2.10} follows from the similar argument in \cite{2017-Cao-Li-Titi-Global}. This completes the proof of Proposition \ref{prop3.5}.
\end{proof}

The next proposition is utilized in the proof of Theorem \ref{thm2.4}.
\begin{proposition}\label{prop3.6}
Let $(\tilde A,\tilde B)$ be the local-in-time strong solution of system \eqref{2.10}, corresponding to \eqref{2.4}-\eqref{2.6}. Then, this pair of solution must satisfy the following inequality
\begin{equation*}
\begin{aligned}
               &\mathop {\sup }\limits_{(x,y,z) \in \bar \Omega } \vert {\tilde A(x,y,z,{t_\eta })} \vert + \mathop {\sup }\limits_{(x,y,z) \in \bar \Omega } \vert {\tilde B(x,y,z,{t_\eta })} \vert\\
               &\le {C}\left( {{{\Vert {{\nabla _H}\tilde A} \Vert}_6} + {{\Vert {{\partial _z}\tilde A} \Vert}_2} + {{\Vert {{\nabla _H}\tilde B} \Vert}_6} + {{\Vert {{\partial _z}\tilde B} \Vert}_2}} \right)({t_\eta }),
  \end{aligned}
\end{equation*}
for some fixed time ${t_\eta } \in [0,t_1^ * )$.
\end{proposition}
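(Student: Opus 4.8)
The plan is to read the asserted bound as a purely spatial, anisotropic Sobolev embedding applied at a single well-chosen time slice. First I would fix $t_\eta$: by Proposition \ref{prop3.2} the strong solution satisfies $(\nabla_H\tilde A,\nabla_H\tilde B)\in L^2([0,t_1^*);H^1(\Omega))$ and $(\tilde A,\tilde B)\in L^\infty([0,t_1^*);H^1(\Omega))$, so for a.e. $t\in[0,t_1^*)$ one has $\nabla_H\tilde A(t),\nabla_H\tilde B(t)\in H^1(\Omega)\hookrightarrow L^6(\Omega)$ by the three-dimensional Sobolev embedding, and $\partial_z\tilde A(t),\partial_z\tilde B(t)\in L^2(\Omega)$. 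I choose $t_\eta$ to be any such time, at which the right-hand side is finite; after this the time variable plays no further role. It then suffices to prove, for a periodic scalar $f$ on $\Omega=M\times(-1,1)$, the static inequality $\|f\|_{L^\infty(\Omega)}\le C(\|f\|_{L^2(\Omega)}+\|\nabla_H f\|_{L^6(\Omega)}+\|\partial_z f\|_{L^2(\Omega)})$, and to apply it to $f=\tilde A(t_\eta)$ and $f=\tilde B(t_\eta)$ and add.

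This inequality is the non-homogeneous anisotropic Sobolev embedding with integrability exponents $(6,6,2)$ in the directions $(x,y,z)$. That $L^\infty$ is reachable is a matter of the index count: the harmonic mean $\bar p$ of the three exponents satisfies $\tfrac1{\bar p}=\tfrac13(\tfrac16+\tfrac16+\tfrac12)=\tfrac{5}{18}$, so $\bar p=\tfrac{18}{5}>3=\dim\Omega$, which is exactly the subcritical condition $\tfrac16+\tfrac16+\tfrac12<1$ guaranteeing a continuous embedding into $L^\infty$ (indeed into a Hölder space). To establish it by hand I would follow the Gagliardo product route adapted to the anisotropy: bound $\|f\|_{L^\infty}$ by a product of one-dimensional $W^{1,p_i}$ contributions in the three coordinate directions and combine them through the generalized Hölder inequality with the exponents matched to $(6,6,2)$. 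The finite height of the slab and the periodicity then let me convert the resulting mixed-norm quantities into the stated $\|\nabla_H f\|_{L^6(\Omega)}$ and $\|\partial_z f\|_{L^2(\Omega)}$ (using $L^6_z\hookrightarrow L^2_z$ on $(-1,1)$), retaining only the spatial average as a lower-order factor. The elementary vertical ingredient is the pointwise bound already exploited after \eqref{3.8}, namely $|f(x,y,z)|\le\tfrac12\int_{-1}^1|f|\,dz'+\int_{-1}^1|\partial_z f|\,dz'$, together with the two-dimensional Sobolev embedding $W^{1,6}(M)\hookrightarrow L^\infty(M)$ for the vertically averaged (barotropic) mode.

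Finally I would dispose of the lower-order term: the non-homogeneous embedding leaves a factor $\|f\|_{L^2(\Omega)}$, and by the uniform estimate \eqref{3.1} of Proposition \ref{prop3.2} this is bounded by $\sup_{0\le s<t_1^*}\|(\tilde A,\tilde B)\|_{H^1(\Omega)}\le C$; absorbing it into the constant yields the stated form, with $C$ depending only on the data and the a priori bound. The main obstacle is precisely that the estimate is genuinely anisotropic and cannot be obtained by a naive iteration of isotropic embeddings: a direct barotropic–baroclinic split would force the baroclinic $L^\infty$ to be controlled by $\|\partial_z f\|_{L^6}$ or by $\|\nabla_H\partial_z f\|_{L^2}$, neither of which is available on the right-hand side. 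The coupling of the strong horizontal integrability ($L^6$) with the weaker vertical one ($L^2$) must therefore be preserved throughout, and this is the delicate bookkeeping in the Gagliardo–Hölder step; everything else — the one- and two-dimensional embeddings and the applications of Hölder and Minkowski in the $z$ variable — is routine.
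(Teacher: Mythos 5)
Your proposal is correct and follows essentially the same route as the paper: the paper proves this proposition by a one-line application of the anisotropic $L^\infty$ embedding lemma of Li--Titi--Yuan \cite{2022-Li-Titi} with exponents $p_1=p_2=6$, $p_3=2$, which is exactly the inequality you identify through the subcriticality condition $\tfrac16+\tfrac16+\tfrac12<1$ and whose proof you sketch. The only difference is that you re-derive that embedding (and explicitly absorb the lower-order $L^2$ term via the a priori bound \eqref{3.1}) instead of citing it.
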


\begin{proof}
         Taking $p_1 = 6$, $p_2 = 6$ and $p_3 = 2$, a direct application of Lemma \ref{rem2.2} in \cite{2022-Li-Titi} leads to Proposition \ref{prop3.6}.
\end{proof}

\begin{proof2.4} Recalling \eqref{3.1} in Proposition \ref{prop3.2}, we obtain that
\begin{equation*}
               \int_{t_1^*/8}^{t_1^ * } {\left( {\Vert {\nabla {\nabla _H}\tilde A} \Vert_2^2 + \Vert {\nabla {\nabla _H}\tilde B} \Vert_2^2} \right)ds}  \le C,
\end{equation*}
and we further choose a fixed time ${t_1} \in (t_1^*/8,t_1^ * )$ such that
\begin{equation} \label{3.22}
               {\Vert {\nabla {\nabla _H}\tilde A} \Vert_2^2}(t_1^*) + {\Vert {\nabla {\nabla _H}\tilde B} \Vert_2^2}(t_1^*) \le \frac{C}{{t_1^*}}.
\end{equation}
In addition, the following energy estimate holds for system \eqref{2.10}, that is,
\begin{equation} \label{3.23}
            \mathop {\sup }\limits_{0 \le s \le {t_1}} \left( {\Vert {(\tilde A,\tilde B)} \Vert_{{H^1}(\Omega )}^2} \right)(s) + \int_0^{t_1} {\left( {\Vert {{\nabla _H}\tilde A} \Vert_{{H^1}(\Omega )}^2 + \Vert {{\nabla _H}\tilde B} \Vert_{{H^1}(\Omega )}^2 + \Vert {({\partial _t}\tilde A,{\partial _t}\tilde B)} \Vert_2^2} \right)ds} \le C,
\end{equation}
With help of Proposition \ref{prop3.6}, Sobolev embedding theorem, \eqref{3.22} and \eqref{3.23}, one deduces
\begin{equation*}
    \begin{aligned}
                &\mathop {\sup }\limits_{(x,y,z) \in \bar \Omega } \vert {\tilde A(x,y,z,{t_1})} \vert + \mathop {\sup }\limits_{(x,y,z) \in \bar \Omega } \vert {\tilde B(x,y,z,{t_1})} \vert\\
\le &C\left( {{{\Vert {\nabla {\nabla _H}\tilde A} \Vert}_2}({t_1}) + {{\Vert {{\nabla _H}\tilde A} \Vert}_2}({t_1}) + {{\Vert {{\partial _z}\tilde A} \Vert}_2}({t_1})} \right)\\
                &+ C\left( {{{\Vert {\nabla {\nabla _H}\tilde B} \Vert}_2}({t_1}) + {{\Vert {{\nabla _H}\tilde B} \Vert}_2}({t_1}) + {{\Vert {{\partial _z}\tilde B} \Vert}_2}({t_1})} \right)\\
\le &C\left( {1 + \Vert {\nabla {\nabla _H}\tilde A} \Vert_2^2({t_1}) + \Vert {\nabla \tilde A} \Vert_2^2({t_1}) + \Vert {\nabla {\nabla _H}\tilde B} \Vert_2^2({t_1}) + \Vert {\nabla \tilde B} \Vert_2^2}({t_1}) \right)\\
\le &C(1 + 1/t_1^*),
\end{aligned}
\end{equation*}
and in particular we have $(\tilde A(x,y,z,{t_1}),\tilde B(x,y,z,{t_1})) \in {L^\infty }(\Omega )$.

In what follows, we want to prove that $({\partial _z}\tilde A,{\partial _z}\tilde B) \in {L^\infty }([0,t_1^ * );{L^p}(\Omega ))$ for some $p \in (2,\infty )$. On the account of the incompressible constraints, differentiating system \eqref{2.10} with respect to $z$, one can easily derive that
\begin{equation} \label{3.24}
\begin{aligned}
                 {\partial _t}({\partial _z}\tilde A) &+ ({\partial _z}\tilde u \cdot {\nabla _H})\tilde A + (\tilde u \cdot {\nabla _H}){\partial _z}\tilde A - ({\nabla _H} \cdot \tilde u){\partial _z}\tilde A\\
                 &- \left( {\int_0^z {{\nabla _H} \cdot \tilde u(x,y,\xi ,t)d\xi } } \right){\partial _{zz}}\tilde A - {\Delta _H}{\partial _z}\tilde A - ({\partial _z}\tilde b \cdot {\nabla _H})\tilde A\\
                 &- (\tilde b \cdot {\nabla _H}){\partial _z}\tilde A + ({\nabla _H} \cdot \tilde b){\partial _z}\tilde A + \left( {\int_0^z {{\nabla _H} \cdot \tilde b(x,y,\xi ,t)d\xi } } \right){\partial _{zz}}\tilde A = 0,
\end{aligned}
\end{equation}
and
\begin{equation} \label{3.25}
\begin{aligned}
                {\partial _t}({\partial _z}\tilde B) &+ ({\partial _z}\tilde u \cdot {\nabla _H})\tilde B + (\tilde u \cdot {\nabla _H}){\partial _z}\tilde B - ({\nabla _H} \cdot \tilde u){\partial _z}\tilde B\\
                &- \left( {\int_0^z {{\nabla _H} \cdot \tilde u(x,y,\xi ,t)d\xi } } \right){\partial _{zz}}\tilde B - {\Delta _H}{\partial _z}\tilde B + ({\partial _z}\tilde b \cdot {\nabla _H})\tilde B\\
                &+ (\tilde b \cdot {\nabla _H}){\partial _z}\tilde B - ({\nabla _H} \cdot \tilde b){\partial _z}\tilde B - \left( {\int_0^z {{\nabla _H} \cdot \tilde b(x,y,\xi ,t)d\xi } } \right){\partial _{zz}}\tilde B = 0.
\end{aligned}
\end{equation}
We begin with multiplying \eqref{3.24} and \eqref{3.25} by ${\vert {{\partial _z}\tilde A} \vert^{p - 2}}{\partial _z}\tilde A$ and ${\vert {{\partial _z}\tilde B} \vert^{p - 2}}{\partial _z}\tilde B$, respectively, and integrate the resultants over $\Omega$, and then by the help of the integration by parts, we can obtain that
\begin{equation} \label{3.26}
\begin{aligned}
                 \frac{1}{p}&\frac{d}{{dt}}\Vert {{\partial _z}\tilde A} \Vert_p^p + \frac{1}{p}\frac{d}{{dt}}\Vert {{\partial _z}\tilde B} \Vert_p^p + \int_\Omega  {{{\vert {{\partial _z}\tilde A} \vert}^{p - 2}}\left( {{{\vert {{\nabla _H}{\partial _z}\tilde A} \vert}^2} + (p - 2){\left| {{\nabla _H}\vert {{\partial _z}\tilde A} \vert} \right|^2}} \right)d\Omega } \\
                 &+ \int_\Omega  {{{\vert {{\partial _z}\tilde B} \vert}^{p - 2}}\left( {{{\vert {{\nabla _H}{\partial _z}\tilde B} \vert}^2} + (p - 2){\left| {{\nabla _H}\vert {{\partial _z}\tilde B} \vert} \right|^2}} \right)d\Omega } \\
                 =& - \int_\Omega  {\left[ {({\partial _z}\tilde u \cdot {\nabla _H})\tilde A - ({\nabla _H} \cdot \tilde u){\partial _z}\tilde A - ({\partial _z}\tilde b \cdot {\nabla _H})\tilde A + ({\nabla _H} \cdot \tilde b){\partial _z}\tilde A} \right] \cdot {{\vert {{\partial _z}\tilde A} \vert}^{p - 2}}{\partial _z}\tilde Ad\Omega } \\
                 &- \int_\Omega  {\left[ {({\partial _z}\tilde u \cdot {\nabla _H})\tilde B - ({\nabla _H} \cdot \tilde u){\partial _z}\tilde B + ({\partial _z}\tilde b \cdot {\nabla _H})\tilde B - ({\nabla _H} \cdot \tilde b){\partial _z}\tilde B} \right] \cdot {{\vert {{\partial _z}\tilde B} \vert}^{p - 2}}{\partial _z}\tilde Bd\Omega }\\
                 \le& \int_\Omega  {\left( {\vert {{\partial _z}\tilde u} \vert\vert {{\nabla _H}\tilde A} \vert{{\vert {{\partial _z}\tilde A} \vert}^{p - 1}} + \vert {{\nabla _H}\tilde u} \vert{{\vert {{\partial _z}\tilde A} \vert}^p} + \vert {{\partial _z}\tilde b} \vert\vert {{\nabla _H}\tilde A} \vert{{\vert {{\partial _z}\tilde A} \vert}^{p - 1}} + \vert {{\nabla _H}\tilde b} \vert{{\vert {{\partial _z}\tilde A} \vert}^p}} \right)d\Omega } \\
                 &+ \int_\Omega  {\left( {\vert {{\partial _z}\tilde u} \vert\vert {{\nabla _H}\tilde B} \vert{{\vert {{\partial _z}\tilde B} \vert}^{p - 1}} + \vert {{\nabla _H}\tilde u} \vert{{\vert {{\partial _z}\tilde B} \vert}^p} + \vert {{\partial _z}\tilde b} \vert\vert {{\nabla _H}\tilde B} \vert{{\vert {{\partial _z}\tilde B} \vert}^{p - 1}} + \vert {{\nabla _H}\tilde b} \vert{{\vert {{\partial _z}\tilde B} \vert}^p}} \right)d\Omega } \\
                 \le& \int_\Omega  {\left( {4\vert {{\nabla _H}\tilde A} \vert{{\vert {{\partial _z}\tilde A} \vert}^p} + 4\vert {{\nabla _H}\tilde B} \vert{{\vert {{\partial _z}\tilde B} \vert}^p}} \right)d\Omega }:= {K_1} + {K_2},
\end{aligned}
\end{equation}
where the periodic boundary condition \eqref{2.4} and symmetric condition \eqref{2.6} have been applied. To deal with the integral terms ${K_1}$ and ${K_2}$, we invoke Lemma \ref{lem3.1} and the Young inequality to derive
\begin{equation} \label{3.27}
\begin{aligned}
               {K_1} =& 4\int_\Omega  {\vert {{\nabla _H}\tilde A} \vert{{\vert {{\partial _z}\tilde A} \vert}^p}d\Omega } \\
               =& 4\int_M {\left( {\int_{ - 1}^1 {(\vert {{\nabla _H}\tilde A} \vert + \vert {{\nabla _H}{\partial _z}\tilde A} \vert)dz} } \right)\left( {\int_{ - 1}^1 {{{\vert {{\partial _z}\tilde A} \vert}^{\frac{p}{2}}}{{\vert {{\partial _z}\tilde A} \vert}^{\frac{p}{2}}}dz} } \right)dxdy} \\
               \le& C\left( {{{\Vert {{\nabla _H}\tilde A} \Vert}_2} + {{\Vert {{\nabla _H}{\partial _z}\tilde A} \Vert}_2}} \right){\left\| {{{\vert {{\partial _z}\tilde A} \vert}^{\frac{p}{2}}}} \right\|_2}\left( {{\left\| {{{\vert {{\partial _z}\tilde A} \vert}^{\frac{p}{2}}}} \right\|_2} + {{\left\| {{\nabla _H}{{\vert {{\partial _z}\tilde A} \vert}^{\frac{p}{2}}}} \right\|}_2}} \right)\\
               \le& C\left( {1 + \Vert {{\nabla _H}\tilde A} \Vert_2^2 + \Vert {{\nabla _H}{\partial _z}\tilde A} \Vert_2^2} \right)\Vert {{\partial _z}\tilde A} \Vert_p^p + \frac{1}{16}\int_\Omega  {{{\vert {{\nabla _H}{\partial _z}\tilde A} \vert}^2}{{\vert {{\partial _z}\tilde A} \vert}^{p - 2}}d\Omega } \\
               \le& C\left( {1 + \Vert {{\nabla _H}\tilde A} \Vert_{{H^1}(\Omega )}^2} \right)\Vert {{\partial _z}\tilde A} \Vert_p^p + \frac{1}{16}\int_\Omega  {{{\vert {{\nabla _H}{\partial _z}\tilde A} \vert}^2}{{\vert {{\partial _z}\tilde A} \vert}^{p - 2}}d\Omega },
\end{aligned}
\end{equation}
and
\begin{equation} \label{3.28}
\begin{aligned}
               {K_2} =& 4\int_\Omega  {\vert {{\nabla _H}\tilde B} \vert{{\vert {{\partial _z}\tilde B} \vert}^p}d\Omega }\\
               \le& C\left( {1 + \Vert {{\nabla _H}\tilde B} \Vert_{{H^1}(\Omega )}^2} \right)\Vert {{\partial _z}\tilde B} \Vert_p^p + \frac{1}{16}\int_\Omega  {{{\vert {{\nabla _H}{\partial _z}\tilde B} \vert}^2}{{\vert {{\partial _z}\tilde B} \vert}^{p - 2}}d\Omega }.
\end{aligned}
\end{equation}
Summarily, combining \eqref{3.26}, \eqref{3.27} and \eqref{3.28}, we obtain
\begin{equation*}
    \begin{aligned}
                 \frac{d}{{dt}}&\Vert {{\partial _z}\tilde A} \Vert_p^p + \frac{d}{{dt}}\Vert {{\partial _z}\tilde B} \Vert_p^p + \frac{p}{2}\int_\Omega  {{{\vert {{\partial _z}\tilde A} \vert}^{p - 2}}\left( {{{\vert {{\nabla _H}{\partial _z}\tilde A} \vert}^2} + (p - 2){\left| {{\nabla _H}\vert {{\partial _z}\tilde A} \vert} \right|^2}} \right)d\Omega } \\
                 &+ \frac{p}{2}\int_\Omega  {{{\vert {{\partial _z}\tilde B} \vert}^{p - 2}}\left( {{{\vert {{\nabla _H}{\partial _z}\tilde B} \vert}^2} + (p - 2){\left| {{\nabla _H}\vert {{\partial _z}\tilde B} \vert} \right|^2}} \right)d\Omega } \\
                 \le& C\left( {1 + \Vert {{\nabla _H}\tilde A} \Vert_{{H^1}(\Omega )}^2 + \Vert {{\nabla _H}\tilde B} \Vert_{{H^1}(\Omega )}^2} \right)\left( {\Vert {{\partial _z}\tilde A} \Vert_p^p + \Vert {{\partial _z}\tilde B} \Vert_p^p} \right),
\end{aligned}
\end{equation*}
which together with the Grönwall inequality and Proposition \ref{prop3.2} gives that
\begin{equation*}
    \begin{aligned}
              &\mathop {\sup }\limits_{0 \le s < t_1^ * } \left( {\Vert {({\partial _z}\tilde A,{\partial _z}\tilde B)} \Vert_p^p} \right)(s)\\
              &\le \exp\left[ {C\int_0^{t_1^ * } {\left( {1 + \Vert {{\nabla _H}\tilde A} \Vert_{{H^1}(\Omega )}^2 + \Vert {{\nabla _H}\tilde B} \Vert_{{H^1}(\Omega )}^2} \right)ds} } \right]\left( {\Vert {{\partial _z}{{\tilde A}_0}} \Vert_p^p + \Vert {{\partial _z}{{\tilde B}_0}} \Vert_p^p} \right)\\
              &\le C{e^{C(1 + t_1^ * )}}\left( {\Vert {{\partial _z}{{\tilde A}_0}} \Vert_p^p + \Vert {{\partial _z}{{\tilde B}_0}} \Vert_p^p} \right),
    \end{aligned}
\end{equation*}
and in particular we have $({\partial _z}\tilde A(x,y,z,t_1),{\partial _z}\tilde B(x,y,z,t_1)) \in {L^p}(\Omega )$. Let us recall that $(\tilde A(x,y,z,t_1),$ $\tilde B(x,y,z,t_1))\in {L^\infty }(\Omega )$. Then we conclude from Proposition \ref{prop3.5} that
\begin{equation} \label{3.29}
\begin{aligned}
           \mathop {\sup }\limits_{{t_1} \le s \le t} &\left( {\Vert {(\tilde A,\tilde B)} \Vert_{{H^1}(\Omega )}^2} \right)(s)\\
           &+ \int_{t_1}^t {\left( {\Vert {{\nabla _H}\tilde A} \Vert_{{H^1}(\Omega )}^2 + \Vert {{\nabla _H}\tilde B} \Vert_{{H^1}(\Omega )}^2 + \Vert {({\partial _t}\tilde A,{\partial _t}\tilde B)} \Vert_2^2} \right)ds}  \le {N_1}(t),
\end{aligned}
\end{equation}
for any $t\in [t_1,\infty)$. Thus, it follows from \eqref{3.23} and \eqref{3.29} that the energy estimate in Theorem \ref{thm2.4} holds. \end{proof2.4}

\begin{proof2.6} This proof can be obtained arguing exactly as in the proof of Proposition \ref{prop3.4} with only one difference that we now state. We observe that since the local-in-time strong solution $(\tilde A,\tilde B)$ is extended to the global-in-time one, we cannot apply \eqref{3.1} in Proposition \ref{prop3.2}. Instead, plugging \eqref{3.6} and the energy estimate in Theorem \ref{thm2.4} into \eqref{3.20}, for any $\mathcal{T}$ we obtain the following estimate
\begin{equation*}
\begin{aligned}
                \mathop {\sup }\limits_{0 \le t \le \mathcal{T}} &\left( {\Vert {({{\tilde U}_\varepsilon },\varepsilon {U_{3,\varepsilon }},{{\tilde V}_\varepsilon },\varepsilon {V_{3,\varepsilon }})} \Vert_2^2} \right)(t)\\
                &+ \int_0^\mathcal{T} {\left( {\Vert {{\nabla _H}{{\tilde U}_\varepsilon }} \Vert_2^2 + {\varepsilon ^{\alpha  - 2}}\Vert {{\partial _z}{{\tilde U}_\varepsilon }} \Vert_2^2 + \Vert {{\nabla _H}{{\tilde V}_\varepsilon }} \Vert_2^2 + {\varepsilon ^{\alpha  - 2}}\Vert {{\partial _z}{{\tilde V}_\varepsilon }} \Vert_2^2} \right)ds} \\
                &+ \int_0^\mathcal{T} {\left( {{\varepsilon ^2}\Vert {{\nabla _H}{U_{3,\varepsilon }}} \Vert_2^2 + {\varepsilon ^\alpha }\Vert {{\partial _z}{U_{3,\varepsilon }}} \Vert_2^2 + {\varepsilon ^2}\Vert {{\nabla _H}{V_{3,\varepsilon }}} \Vert_2^2 + {\varepsilon ^\alpha }\Vert {{\partial _z}{V_{3,\varepsilon }}} \Vert_2^2} \right)ds} \\
                \le &C\exp \left\{ {C(\mathcal{T} + 1)\left[ {{N_2}(\mathcal{T}) + N_2^2(\mathcal{T})} \right]} \right\} \times \left\{ {\left( {{\varepsilon ^2} + {\varepsilon ^\alpha }} \right){N_2}(\mathcal{T}) + {\varepsilon ^2}N_2^2(\mathcal{T})} \right.\\
                &\left. { + {\varepsilon ^{\alpha  - 2}}\mathcal{T}{N_2}(\mathcal{T}) + {\varepsilon ^2}(\mathcal{T} + 1){\left( {\Vert {{{\tilde A}_0}} \Vert_2^2 + {\varepsilon ^2}\Vert {{A_{3,0}}} \Vert_2^2 + \Vert {{{\tilde B}_0}} \Vert_2^2 + {\varepsilon ^2}\Vert {{B_{3,0}}} \Vert_2^2} \right)^2}} \right\}\\
                \le &C{\varepsilon ^\gamma }(\mathcal{T} + 1)\exp \left\{ {C(\mathcal{T} + 1)\left[ {{N_2}(\mathcal{T}) + N_2^2(\mathcal{T})} \right]} \right\}\\
                &\times \left[ {{N_2}(\mathcal{T}) + N_2^2(\mathcal{T}) + {\left( {\Vert {{{\tilde A}_0}} \Vert_2^2 + \Vert {{A_{3,0}}} \Vert_2^2 + \Vert {{{\tilde B}_0}} \Vert_2^2 + \Vert {{B_{3,0}}} \Vert_2^2} \right)^2}} \right] := {\varepsilon ^\gamma }{N_3}(\mathcal{T}),
    \end{aligned}
\end{equation*}
 for some positive constant $C$ independent of $\varepsilon$, where ${N_3}(t)$ is nonnegative continuous increasing function on $[0,\infty)$ and $\gamma = \min \{ 2,\alpha - 2\}$ with $\alpha \in (2,\infty )$. As a result, the local-in-time strong convergences in Theorem \ref{thm2.3} can be extended to the global-in-time strong convergences, and the convergence rate is of the order $O({\varepsilon ^{\gamma /2}})$. \end{proof2.6}

\section{Strong convergence for the $H^2$-initial data}
In this section, our aim is devoted to the strong convergence of SHMHD equations \eqref{2.3} to PEHM \eqref{2.10}, with the initial data $({{\tilde A}_0},{{\tilde B}_0}) \in {H^2}(\Omega )$, as the aspect ratio parameter $\varepsilon$ tends zero, in other words, we present the proof of Theorem \ref{thm2.7}.

Under the assumption that an initial pair $({{\tilde A}_0},{{\tilde B}_0}) \in {H^2}(\Omega )$ fulfilling
\begin{equation*}
                 {\nabla _H} \cdot \left( {\int_{ - 1}^1 {{{\tilde A}_0}(x,y,z )dz} } \right) = 0\;{\rm{and}}\;{\nabla _H} \cdot \left( {\int_{ - 1}^1 {{{\tilde B}_0}(x,y,z )dz} } \right) = 0,
\end{equation*}
for any $(x,y)\in M$, system \eqref{2.3} supplemented with \eqref{2.4}-\eqref{2.6} admits a unique local-in-time strong solution $({{\tilde A}_\varepsilon },{A_{3,\varepsilon }},{{\tilde B}_\varepsilon },{B_{3,\varepsilon }})$. Denote by ${\mathcal{T}_\varepsilon ^ *}$ the maximal time of the existence of this local-in-time strong solution. Furthermore, since PEHM \eqref{2.10} shares the same velocity field with the oceanic and atmospheric PEs, we can apply the similar argument as in \cite{2016-Cao-Li-Titi-Global} to prove that there is a unique global-in-time strong solution $(\tilde A,\tilde B)$ to system \eqref{2.10}. The global well-posedness of strong solutions of system \eqref{2.10} is stated as follows.

\begin{proposition}\label{prop4.1}
Let an initial pair $({{\tilde A}_0},{{\tilde B}_0}) \in {H^2}(\Omega )$ be two periodic functions and satisfy
\begin{equation*}
                  {\nabla _H} \cdot \left( {\int_{ - 1}^1 {{{\tilde A}_0}(x,y,z)dz} } \right) = 0\;{\rm{and}}\;{\nabla _H} \cdot \left( {\int_{ - 1}^1 {{{\tilde B}_0}(x,y,z)dz} } \right) = 0,
\end{equation*}
for any $(x,y)\in M$. Then the following conclusions are true:

(i) For any $\mathcal{T} > 0$, PEHM \eqref{2.10}, corresponding to \eqref{2.4}-\eqref{2.6}, has a unique global-in-time strong solution $(\tilde A,\tilde B)$ fulfilling the following regularities
\begin{equation*}
    \begin{aligned}
              &(\tilde A,\tilde B) \in {L^\infty }([0,\mathcal{T}];{H^2}(\Omega )) \cap C([0,\mathcal{T}];{H^1}(\Omega )),\\
              &({\nabla _H}\tilde A,{\nabla _H}\tilde B) \in {L^2}([0,\mathcal{T}];{H^2}(\Omega )),\; ({\partial _t}\tilde A,{\partial _t}\tilde B) \in {L^2}([0,\mathcal{T}];{H^1}(\Omega ));
        \end{aligned}
\end{equation*}

(ii) The global-in-time strong solution $(\tilde A,\tilde B)$ to PEHM \eqref{2.10} fulfils the following estimate
\begin{equation} \label{4.1}
        \begin{aligned}
                  \mathop {\sup }\limits_{0 \le s \le t} &\left( {\Vert {(\tilde A,\tilde B)} \Vert_{{H^2}(\Omega )}^2} \right)(s) + \int_0^t {\left( {\Vert {{\nabla _H}\tilde A} \Vert_{{H^2}(\Omega )}^2 + \Vert {{\nabla _H}\tilde B} \Vert_{{H^2}(\Omega )}^2} \right)ds} \\
                  &+ \int_0^t {\left( {\Vert {({\partial _t}\tilde A,{\partial _t}\tilde B)} \Vert_{{H^1}(\Omega )}^2} \right)} ds \le {N_4}(t),
        \end{aligned}
\end{equation}
for $t \in [0,\infty)$, where ${N_4}(t)$ is a non-negative continuous increasing function defined on $[0,\infty)$.
\end{proposition}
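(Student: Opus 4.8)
The plan is to obtain Proposition \ref{prop4.1} by pairing a short-time $H^2$ existence theorem with a global-in-time a priori $H^2$ bound, in the spirit of \cite{2016-Cao-Li-Titi-Global}. First I would note that, since $(\tilde A_0,\tilde B_0)\in H^2(\Omega)$ and $A_{3,0},B_{3,0}$ are reconstructed from \eqref{2.12}-\eqref{2.13}, a Galerkin scheme (one derivative above Proposition \ref{prop3.2}) produces a unique strong solution on a maximal interval $[0,\mathcal{T}^*)$ enjoying the regularity in part (i). Uniqueness follows from an $L^2$ estimate for the difference of two solutions, where the transport and coupling terms are handled using that any $H^2$ solution lies in $L^\infty(\Omega)$ by Sobolev embedding. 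The whole statement then collapses to a single continuation claim: the $H^2$ norm stays finite on every bounded time interval, i.e. $\mathcal{T}^*=\infty$ with the controlled bound $N_4(t)$ of \eqref{4.1}.

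For the global bound I would bootstrap strictly according to the dissipation available, which acts only through $-\Delta_H$. Because $H^2(\Omega)\hookrightarrow W^{1,p}(\Omega)$ for all $p\in(2,6]$, the data satisfy $(\partial_z\tilde A_0,\partial_z\tilde B_0)\in L^p(\Omega)$, so Theorem \ref{thm2.4} immediately grants a \emph{global} $H^1$ solution with the bound $N_2(t)$; this already delivers the $L^2$ energy balance, the first-order horizontal estimate, the global $L^\infty_tL^p_x$ control of $\partial_z\tilde A,\partial_z\tilde B$ coming from \eqref{3.24}-\eqref{3.28}, and through Proposition \ref{prop3.6} the control of $\sup_{\bar\Omega}(|\tilde A|+|\tilde B|)$ in $L^2_t$. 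On this scaffolding I would close the genuinely second-order estimate: apply each of $\nabla_H^2$, $\partial_z\nabla_H$ and $\partial_{zz}$ to $(2.10)_1$ and $(2.10)_3$, multiply by the matching second derivatives of $\tilde A$ and $\tilde B$, integrate over $\Omega$, discard boundary terms via the periodicity and symmetry \eqref{2.6}, and use $\nabla_H\cdot\tilde A+\partial_z A_3=0$ (and its $\tilde B$ analogue) to rewrite the vertical-velocity contributions as $-\int_0^z\nabla_H\cdot\tilde A\,d\xi$. Estimating every nonlinearity by the anisotropic tri-linear inequality of Lemma \ref{lem3.1} and Young's inequality, keeping the single top-order factor as a horizontal gradient so it is absorbed by $\|\nabla_H(\partial^2\cdot)\|_2^2$, yields a differential inequality of the form $\frac{d}{dt}\|(\tilde A,\tilde B)\|_{H^2}^2+(\text{dissipation})\le G(t)\,\|(\tilde A,\tilde B)\|_{H^2}^2+H(t)$ with $G,H\in L^1_{loc}$ expressed through the $H^1$, $L^p(\partial_z)$ and $L^\infty$ norms; Grönwall then produces \eqref{4.1} on each $[0,\mathcal{T}]$, and the continuation criterion forces $\mathcal{T}^*=\infty$.

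The principal obstacle is the complete absence of vertical dissipation: since only $\Delta_H$ smooths, no parabolic gain ever reaches the vertical derivatives, and the vertical-stretching terms such as $\big(\int_0^z\nabla_H\cdot\tilde A\,d\xi\big)\partial_{zz}\tilde A$ and their $H^2$-level descendants carry no favorable sign and no dissipative partner. The mechanism that rescues the estimate, exactly as in \cite{2016-Cao-Li-Titi-Global}, is the combination of the global $L^\infty_tL^p_x$ bound on $\partial_z$ with Lemma \ref{lem3.1}: whenever a term would place a vertical derivative on a top-order factor, I would instead route that vertical derivative onto a factor already controlled in $L^p$ or $L^\infty$, leaving the top-order derivative horizontal and hence absorbable. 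The delicate part is purely the bookkeeping of this routing — arranging, uniformly across the $\nabla_H^2$, $\partial_z\nabla_H$ and $\partial_{zz}$ equations, that at most one factor in each product carries a horizontal top-order derivative while all vertical derivatives land on $L^p$- or $L^\infty$-controlled factors — after which the differential inequality closes and the global $H^2$ bound follows.
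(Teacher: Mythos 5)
Your proposal is correct and follows essentially the route the paper intends: the paper itself offers no proof of Proposition \ref{prop4.1} beyond the remark that PEHM \eqref{2.10} shares the velocity structure of the horizontally viscous primitive equations and that the argument of \cite{2016-Cao-Li-Titi-Global} carries over, and your outline (local $H^2$ existence, global $H^1$ and $L^\infty_t L^p_x$ control of $\partial_z$, then second-order estimates closed via Lemma \ref{lem3.1} by always routing the top-order derivative onto a horizontal factor absorbed by the $-\Delta_H$ dissipation) is a faithful expansion of exactly that adaptation. Your additional observation that $H^2(\Omega)\hookrightarrow W^{1,p}(\Omega)$ lets Theorem \ref{thm2.4} supply the global first-order layer directly is consistent with the paper's framework and streamlines the lower-order part of the bootstrap.
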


In what follows, we denote
\begin{equation*}
    \begin{aligned}
                ({U_\varepsilon },{V_\varepsilon },{P_\varepsilon }) =& ({{\tilde U}_\varepsilon },{U_{3,\varepsilon }},{{\tilde V}_\varepsilon },{V_{3,\varepsilon }},{P_\varepsilon }),\\
                ({{\tilde U}_\varepsilon },{U_{3,\varepsilon }},{{\tilde V}_\varepsilon },{V_{3,\varepsilon }},{P_\varepsilon }) =& ({{\tilde A}_\varepsilon } - \tilde A,{A_{3,\varepsilon }} - {A_3},{{\tilde B}_\varepsilon } - \tilde B,{B_{3,\varepsilon }} - {B_3},{p_\varepsilon } - p),
     \end{aligned}
\end{equation*}
and subtract system \eqref{2.10} from system \eqref{2.3} to lead to the following difference system
\begin{equation} \label{4.2}
\begin{cases}
    \begin{aligned}
                  {\partial _t}{{\tilde U}_\varepsilon } &- {\Delta _H}{{\tilde U}_\varepsilon } - {\varepsilon ^{\alpha  - 2}}{\partial _{zz}}{{\tilde U}_\varepsilon } - {\varepsilon ^{\alpha  - 2}}{\partial _{zz}}\tilde A + (B \cdot \nabla ){{\tilde U}_\varepsilon } + ({V_\varepsilon } \cdot \nabla )\tilde A\\
                  &+ ({V_\varepsilon } \cdot \nabla ){{\tilde U}_\varepsilon } + {\nabla _H}{P_\varepsilon } = 0,\\
                  {\varepsilon ^2}({\partial _t}&{U_{3,\varepsilon }} - {\Delta _H}{U_{3,\varepsilon }} - {\varepsilon ^{\alpha  - 2}}{\partial _{zz}}{U_{3,\varepsilon }} + B \cdot \nabla {A_3} + B \cdot \nabla {U_{3,\varepsilon }} + {V_\varepsilon } \cdot \nabla {A_3})\\
                  &+ {\varepsilon ^2}{V_\varepsilon } \cdot \nabla {U_{3,\varepsilon }} + {\partial _z}{P_\varepsilon } + {\varepsilon ^2}({\partial _t}{A_3} - {\Delta _H}{A_3} + {\varepsilon ^{\alpha  - 2}}{\nabla _H} \cdot {\partial _z}\tilde A) = 0,\\
                  {\partial _t}{{\tilde V}_\varepsilon } &- {\Delta _H}{{\tilde V}_\varepsilon } - {\varepsilon ^{\alpha  - 2}}{\partial _{zz}}{{\tilde V}_\varepsilon } - {\varepsilon ^{\alpha  - 2}}{\partial _{zz}}\tilde B + (A \cdot \nabla ){{\tilde V}_\varepsilon } + ({U_\varepsilon } \cdot \nabla )\tilde B\\
                  &+ ({U_\varepsilon } \cdot \nabla ){{\tilde V}_\varepsilon } + {\nabla _H}{P_\varepsilon } = 0,\\
                  {\varepsilon ^2}({\partial _t}&{V_{3,\varepsilon }} - {\Delta _H}{V_{3,\varepsilon }} - {\varepsilon ^{\alpha  - 2}}{\partial _{zz}}{V_{3,\varepsilon }} + A \cdot \nabla {B_3} + A \cdot \nabla {V_{3,\varepsilon }} + {U_\varepsilon } \cdot \nabla {B_3})\\
                  &+ {\varepsilon ^2}{U_\varepsilon } \cdot \nabla {V_{3,\varepsilon }} + {\partial _z}{P_\varepsilon } + {\varepsilon ^2}({\partial _t}{B_3} - {\Delta _H}{B_3} + {\varepsilon ^{\alpha  - 2}}{\nabla _H} \cdot {\partial _z}\tilde B) = 0,\\
                  {\nabla _H} \cdot& {{\tilde U}_\varepsilon } + {\partial _z}{U_{3,\varepsilon }} = 0,\\
                  {\nabla _H} \cdot& {{\tilde V}_\varepsilon } + {\partial _z}{V_{3,\varepsilon }} = 0,
        \end{aligned}
\end{cases}
\end{equation}
where $A = (\tilde A,{A_3})$ and $B = (\tilde B,{B_3})$.

\begin{proposition}\label{prop4.2}
Assume that $({{\tilde A}_0},{{\tilde B}_0}) \in {H^2}(\Omega )$ satisfying
\begin{equation*}
                  {\nabla _H} \cdot \left( {\int_{ - 1}^1 {{{\tilde A}_0}(x,y,z)dz} } \right) = 0\;{\rm{and}}\;{\nabla _H} \cdot \left( {\int_{ - 1}^1 {{{\tilde B}_0}(x,y,z)dz} } \right) = 0,
\end{equation*}
for any $(x,y)\in M$. Then the following basic energy estimate holds for system \eqref{4.2}, that is,
\begin{equation*}
    \begin{aligned}
                    \mathop {\sup }\limits_{0 \le s \le t}& \left( {\Vert {({{\tilde U}_\varepsilon },\varepsilon {U_{3,\varepsilon }},{{\tilde V}_\varepsilon },\varepsilon {V_{3,\varepsilon }})} \Vert_2^2} \right)(s)\\
                    &+ \int_0^t {\left( {\Vert {\nabla _H}{{\tilde U}_\varepsilon } \Vert_2^2 + {\varepsilon ^{\alpha  - 2}}\Vert  {\partial _z}{{\tilde U}_\varepsilon }} \Vert_2^2 + \Vert {\nabla _H}{{\tilde V}_\varepsilon } \Vert_2^2 + {\varepsilon ^{\alpha  - 2}}\Vert {\partial _z}{{\tilde V}_\varepsilon } \Vert_2^2 \right)ds} \\
                    &+ \int_0^t {\left( {{\varepsilon ^2}\Vert {\nabla _H}{U_{3,\varepsilon }}} \Vert_2^2 + {\varepsilon ^\alpha }\Vert {\partial _z}{U_{3,\varepsilon }} \Vert_2^2 + {\varepsilon ^2}\Vert {\nabla _H}{V_{3,\varepsilon }} \Vert_2^2 + {\varepsilon ^\alpha }\Vert {\partial _z}{V_{3,\varepsilon }} \Vert_2^2 \right)ds} \le {\varepsilon ^\gamma }{N_5}(t),
    \end{aligned}
\end{equation*}
for any $t \in [0,{\mathcal{T}_\varepsilon ^ *} )$, where $\gamma = \min \{ 2,\alpha - 2\}$ with $\alpha \in (2,\infty )$. Notice that for some positive constant $C$ independent of $\varepsilon$, a non-negative continuous increasing function ${N_5}(t)$ defined on $[0,\infty)$, is denoted by
\begin{equation*}
                 {N_5}(t) = C(t + 1){e^{C(t + 1)\left[ {{N_4}(t) + N_4^2(t)} \right]}}\left[ {{N_4}(t) + N_4^2(t) + {\left( {\Vert {{{\tilde A}_0}} \Vert_2^2 + \Vert {{A_{3,0}}} \Vert_2^2 + \Vert {{{\tilde B}_0}} \Vert_2^2 + \Vert {{B_{3,0}}} \Vert_2^2} \right)^2}} \right].
\end{equation*}
\end{proposition}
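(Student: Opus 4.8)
The plan is to perform a direct energy estimate on the difference system \eqref{4.2}, following the same strategy as in the proof of Proposition \ref{prop3.4}; the simplification here is that, for $H^2$ initial data, system \eqref{2.3} admits a genuine local-in-time strong solution (Remark \ref{rem2.2}), so every integration by parts below is rigorously justified rather than formal. First I would test $(4.2)_1$, $(4.2)_2$, $(4.2)_3$ and $(4.2)_4$ against $\tilde U_\varepsilon$, $U_{3,\varepsilon}$, $\tilde V_\varepsilon$ and $V_{3,\varepsilon}$, respectively, integrate over $\Omega$ and add the four identities. The time-derivative contributions assemble into $\tfrac12\tfrac{d}{dt}\|(\tilde U_\varepsilon,\varepsilon U_{3,\varepsilon},\tilde V_\varepsilon,\varepsilon V_{3,\varepsilon})\|_2^2$, and the genuine dissipation of the unknowns produces precisely the gradient norms on the left-hand side of the claimed estimate.

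The second step is to clear the terms that cancel. The pressure contributions $\int_\Omega\nabla_H P_\varepsilon\cdot\tilde U_\varepsilon+\int_\Omega\partial_z P_\varepsilon\,U_{3,\varepsilon}$ and their $(\tilde V_\varepsilon,V_{3,\varepsilon})$ analogues vanish after integration by parts thanks to $\nabla_H\cdot\tilde U_\varepsilon+\partial_z U_{3,\varepsilon}=0$ and $\nabla_H\cdot\tilde V_\varepsilon+\partial_z V_{3,\varepsilon}=0$; and because $A$, $B$, $U_\varepsilon$, $V_\varepsilon$ are all divergence free, the self-transport terms $(V_\varepsilon\cdot\nabla)\tilde U_\varepsilon\cdot\tilde U_\varepsilon$, $(B\cdot\nabla)\tilde U_\varepsilon\cdot\tilde U_\varepsilon$ and their counterparts integrate to zero. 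What remains on the right-hand side splits into the coupling integrals $\int_\Omega(V_\varepsilon\cdot\nabla)\tilde A\cdot\tilde U_\varepsilon$, $\int_\Omega(U_\varepsilon\cdot\nabla)\tilde B\cdot\tilde V_\varepsilon$ (plus their $\varepsilon^2$-weighted vertical versions), and the $\varepsilon$-residual terms inherited from passing from the SHMHD vertical balance to the PEHM one: the vertical-diffusion residuals $\varepsilon^{\alpha-2}\partial_{zz}\tilde A$, $\varepsilon^{\alpha-2}\partial_{zz}\tilde B$, the vertical-momentum residuals $\varepsilon^2(\partial_t A_3-\Delta_H A_3)$, $\varepsilon^2(\partial_t B_3-\Delta_H B_3)$, and the higher-order cross terms weighted by $\varepsilon^{\alpha}$.

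The coupling integrals I would estimate exactly as the term $I_1$ in Proposition \ref{prop3.4}: after expanding $(V_\varepsilon\cdot\nabla)\tilde A=(\tilde V_\varepsilon\cdot\nabla_H)\tilde A+V_{3,\varepsilon}\partial_z\tilde A$ with $V_{3,\varepsilon}=-\int_0^z\nabla_H\cdot\tilde V_\varepsilon\,d\xi$ (and symmetrically for the other one, using an integration by parts and the incompressibility where convenient), the anisotropic tri-linear bound of Lemma \ref{lem3.1} together with Young's inequality lets me absorb a fixed fraction of $\|\nabla_H\tilde U_\varepsilon\|_2^2+\|\nabla_H\tilde V_\varepsilon\|_2^2$ and leaves integrands of the form $[Y_1(s)+Y_2(s)]X(s)$, where $X=\|\tilde U_\varepsilon\|_2^2+\|\tilde V_\varepsilon\|_2^2$ and $Y_1,Y_2$ are assembled from $\|\tilde A\|_2$, $\|\nabla_H\tilde A\|_2$, $\|\partial_z\tilde A\|_2$, $\|\nabla_H\partial_z\tilde A\|_2$ and the $\tilde B$ analogues, all controlled on $[0,t]$ by $N_4$ through Proposition \ref{prop4.1}. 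For the $\varepsilon$-residuals I would use Hölder and Young only: integrating $\varepsilon^{\alpha-2}\partial_{zz}\tilde A$ by parts yields $\varepsilon^{\alpha-2}\int_\Omega\partial_z\tilde A\cdot\partial_z\tilde U_\varepsilon$, which is absorbed against $\tfrac1C\varepsilon^{\alpha-2}\|\partial_z\tilde U_\varepsilon\|_2^2$, leaving the source $C\varepsilon^{\alpha-2}\|\partial_z\tilde A\|_2^2$; the $\varepsilon^2$-weighted vertical terms pair with $\varepsilon^2\|U_{3,\varepsilon}\|_2^2$ and feed directly into the Grönwall loop, leaving sources $C\varepsilon^2(\|\partial_t A_3\|_2^2+\|\Delta_H A_3\|_2^2+\cdots)$ all bounded by $N_4$; the $\varepsilon^\alpha$ terms are of still higher order. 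Since $\alpha>2$, the smallest power surviving is $\gamma=\min\{2,\alpha-2\}$, which is the origin of the prefactor $\varepsilon^\gamma$.

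Collecting all the bounds and defining the running functional $m(t)$ exactly as in the proof of Proposition \ref{prop3.4}, I arrive at a differential inequality $m'(t)\le C[Y_1(t)+Y_2(t)]\,m(t)+\varepsilon^\gamma\,\mathcal S(t)$ with $m(0)=0$; Grönwall's inequality then yields the claimed estimate, the factor $e^{C(t+1)[N_4(t)+N_4^2(t)]}$ stemming from $\int_0^t(Y_1+Y_2)\,ds\le C(t+1)[N_4(t)+N_4^2(t)]$ and the integrated source being absorbed into $N_4(t)+N_4^2(t)+(\|\tilde A_0\|_2^2+\|A_{3,0}\|_2^2+\|\tilde B_0\|_2^2+\|B_{3,0}\|_2^2)^2$, which reproduces the displayed $N_5(t)$. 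I expect the main obstacle to be the coupling nonlinearities: the vertical factors $V_{3,\varepsilon}$ and $U_{3,\varepsilon}$ enjoy no direct $L^2$ control and must be recovered through the incompressibility relations and the sharp anisotropic estimate of Lemma \ref{lem3.1}, and one must track with care which power of $\varepsilon$ accompanies each residual so that the optimal exponent $\gamma=\min\{2,\alpha-2\}$ is attained rather than a weaker one.
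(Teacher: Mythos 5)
Your proposal is correct, but it takes a different route from the paper. The paper proves Proposition \ref{prop4.2} by declaring that one argues ``exactly as in the proof of Theorem \ref{thm2.6}'', i.e.\ it reuses the weak--strong relative-energy scheme of Propositions \ref{prop3.3} and \ref{prop3.4}: the basic $L^2$ inequality for the difference is obtained by adding the cross identity \eqref{3.2} (the strong PEHM solution used as test function in the weak formulation of \eqref{2.3}) to \eqref{3.4} and subtracting the energy identity \eqref{3.5} and the energy inequality \eqref{3.6}, with the only change being that the $H^2$ a priori bound \eqref{4.1} replaces the estimate of Theorem \ref{thm2.4} in the Gr\"onwall step. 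You instead test the difference system \eqref{4.2} directly with $(\tilde U_\varepsilon, U_{3,\varepsilon},\tilde V_\varepsilon,V_{3,\varepsilon})$. Both routes are valid and yield the same differential inequality and the same $\varepsilon^\gamma N_5(t)$ bound; the paper's route has the advantage of recycling machinery that works even when the SHMHD solution is merely Leray--Hopf (which is why it is the only option in Section 3), whereas your direct computation is more elementary, is rigorous here precisely because the $H^2$ data furnish a genuine strong solution of \eqref{2.3} on $[0,\mathcal{T}_\varepsilon^*)$ --- a point you correctly flag --- and is in fact structurally more consistent with the paper's own Proposition \ref{prop4.3}, which also works directly with system \eqref{4.2}. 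The individual terms you must estimate differ slightly in form from the paper's $I_1$--$I_6$ (e.g.\ your vertical coupling terms are $\varepsilon^2\int (V_\varepsilon\cdot\nabla A_3)U_{3,\varepsilon}$ and $\varepsilon^2\int (B\cdot\nabla A_3)U_{3,\varepsilon}$ rather than $\varepsilon^2\int(B_\varepsilon\cdot\nabla A_{3,\varepsilon})A_3$), but each is handled by the same combination of the incompressibility relations, Lemma \ref{lem3.1}, Young's inequality and Gr\"onwall, and your bookkeeping of the residual powers $\varepsilon^2$, $\varepsilon^{\alpha-2}$, $\varepsilon^\alpha$ correctly isolates $\gamma=\min\{2,\alpha-2\}$.
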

\begin{proof}
The proof of Proposition \ref{prop4.2} can be obtained arguing exactly as in the proof of Theorem \ref{thm2.6} (see Section 3 for details) with only one difference that we now state. We observe that in the case $H^2$-initial data we can apply the energy estimate \eqref{4.1} instead of the energy estimate in Theorem \ref{thm2.4}.
\end{proof}

Under a suitable smallness assumption, based on Proposition \ref{prop4.1} and Proposition \ref{prop4.2}, we also obtain the first-order energy estimate of the solution $({\tilde U_\varepsilon },{U_{3,\varepsilon }},{\tilde V_\varepsilon },{V_{3,\varepsilon }})$ to system \eqref{4.2} in the following proposition.

\begin{proposition}\label{prop4.3}
Let a pair of periodic function $({{\tilde A}_0},{{\tilde B}_0}) \in {H^2}(\Omega )$ and fulfil
\begin{equation*}
                  {\nabla _H} \cdot \left( {\int_{ - 1}^1 {{{\tilde A}_0}(x,y,z)dz} } \right) = 0\;{\rm{and}}\;{\nabla _H} \cdot \left( {\int_{ - 1}^1 {{{\tilde B}_0}(x,y,z)dz} } \right) = 0,
\end{equation*}
for any $(x,y)\in M$. Suppose that
\begin{equation*}
                \mathop {\sup}\limits_{0 \le s \le t} \left( {\Vert {\nabla ( {{{\tilde U}_\varepsilon },{{\tilde V}_\varepsilon }} )} \Vert_2^2 + {\varepsilon ^2}\Vert {\nabla {U_{3,\varepsilon }}} \Vert_2^2 + {\varepsilon ^2}\Vert {\nabla {V_{3,\varepsilon }}} \Vert_2^2} \right)(s) \le \delta_0^2,
\end{equation*}
then there exists a small constant ${\delta_0} = \sqrt {\frac{1}{{3{C_1}}}} > 0$ such that the first-order energy estimate of the solution $({\tilde U_\varepsilon },{U_{3,\varepsilon }},{\tilde V_\varepsilon },{V_{3,\varepsilon }})$ of system \eqref{4.2} holds, that is,
\begin{equation*}
    \begin{aligned}
                    \mathop {\sup }\limits_{0 \le s \le t} &\left( {\Vert {\nabla ({{\tilde U}_\varepsilon },\varepsilon {U_{3,\varepsilon }},{{\tilde V}_\varepsilon },\varepsilon {V_{3,\varepsilon }})} \Vert_2^2} \right)(s)\\
                    &+ \int_0^t {\left( {\Vert {\nabla {\nabla _H}{{\tilde U}_\varepsilon }} \Vert_2^2 + {\varepsilon ^{\alpha  - 2}}\Vert {\nabla {\partial _z}{{\tilde U}_\varepsilon }} \Vert_2^2 + \Vert {\nabla {\nabla _H}{{\tilde V}_\varepsilon }} \Vert_2^2 + {\varepsilon ^{\alpha  - 2}}\Vert {\nabla {\partial _z}{{\tilde V}_\varepsilon }} \Vert_2^2} \right)ds} \\
                    &+ \int_0^t {\left( {{\varepsilon ^2}\Vert {\nabla {\nabla _H}{U_{3,\varepsilon }}} \Vert_2^2 + {\varepsilon ^\alpha }\Vert {\nabla {\partial _z}{U_{3,\varepsilon }}} \Vert_2^2 + {\varepsilon ^2}\Vert {\nabla {\nabla _H}{V_{3,\varepsilon }}} \Vert_2^2 + {\varepsilon ^\alpha }\Vert {\nabla {\partial _z}{V_{3,\varepsilon }}} \Vert_2^2} \right)ds}  \le {\varepsilon ^\gamma }{N_6}(t),
    \end{aligned}
\end{equation*}
for any $t \in [0,{\mathcal{T}_\varepsilon ^ *} )$, where
\begin{equation*}
\begin{aligned}
               {N_6}(t) \le& C(t + 1)\exp \left\{ {C(t + 1)\left[ {{N_4}(t) + N_4^2(t) + {N_5}(t) + N_5^2(t) + 1} \right]} \right\}\\
               &\times \left[ {{N_4}(t) + N_4^2(t) + {N_5}(t) + N_5^2(t) + {N_4}(t){N_5}(t)} \right]
\end{aligned}
\end{equation*}
is a non-negative continuous increasing function ${N_6}(t)$ defined on $[0,\infty)$. Here $\gamma  = \min \{ 2,\alpha  - 2\}$ with $\alpha \in (2,\infty )$ and some positive constant $C$ dose not depend on $\varepsilon$.
\end{proposition}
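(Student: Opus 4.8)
The plan is to derive a gradient-level (first-order) energy inequality for the difference system \eqref{4.2} and then close it by a Grönwall argument, using the smallness hypothesis to absorb the genuinely nonlinear contributions. Concretely, I would apply the full gradient $\nabla=(\nabla_H,\partial_z)$ to each of the four evolution equations in \eqref{4.2}, test the resulting horizontal equations against $\nabla\tilde U_\varepsilon$ and $\nabla\tilde V_\varepsilon$ and the vertical ones against $\nabla U_{3,\varepsilon}$ and $\nabla V_{3,\varepsilon}$ (the weights $\varepsilon^2$ already sitting inside $(4.2)_2$ and $(4.2)_4$), integrate over $\Omega$, and add the four identities. The time-derivative terms assemble into $\tfrac12\frac{d}{dt}\|\nabla(\tilde U_\varepsilon,\varepsilon U_{3,\varepsilon},\tilde V_\varepsilon,\varepsilon V_{3,\varepsilon})\|_2^2$, while the dissipative operators $-\Delta_H$ and $-\varepsilon^{\alpha-2}\partial_{zz}$ produce exactly the positive second-order quantities $\|\nabla\nabla_H\tilde U_\varepsilon\|_2^2$, $\varepsilon^{\alpha-2}\|\nabla\partial_z\tilde U_\varepsilon\|_2^2$ and their $V$-, $U_3$-, $V_3$-analogues on the left of the claimed estimate. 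The pressure gradients cancel after one integration by parts combined with the divergence-free constraints $(4.2)_5$ and $(4.2)_6$, exactly as in Proposition \ref{prop4.2}.

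It then remains to control the remaining terms, which split into three groups. First, the \emph{forcing} terms descending from $-\varepsilon^{\alpha-2}\partial_{zz}\tilde A$, $\varepsilon^2(\partial_t A_3-\Delta_H A_3)$, $\varepsilon^\alpha\nabla_H\cdot\partial_z\tilde A$ and their $\tilde B$-counterparts: after differentiation and Cauchy--Schwarz these are dominated using the $H^2$-regularity of $(\tilde A,\tilde B)$ supplied by Proposition \ref{prop4.1} (the bound $N_4(t)$), and each carries an explicit prefactor $\varepsilon^{\alpha-2}$ or $\varepsilon^2$, that is $\varepsilon^\gamma$ with $\gamma=\min\{2,\alpha-2\}$; this is the origin of the overall $\varepsilon^\gamma$ in the conclusion. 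Second, the \emph{linear} interaction terms, of the schematic form $\nabla[(B\cdot\nabla)\tilde U_\varepsilon]\cdot\nabla\tilde U_\varepsilon$ and $\nabla[(V_\varepsilon\cdot\nabla)\tilde A]\cdot\nabla\tilde U_\varepsilon$ together with their symmetric partners, carry one smooth factor $(A,B)$ controlled by $N_4(t)$ and at most one copy of the perturbation; the pure transport piece $\int_\Omega(B\cdot\nabla)\nabla\tilde U_\varepsilon\cdot\nabla\tilde U_\varepsilon\,d\Omega$ vanishes by $\nabla\cdot B=0$, and the remaining commutator-type pieces are handled by the anisotropic tri-linear estimate of Lemma \ref{lem3.1}, Young's inequality, and the basic estimate of Proposition \ref{prop4.2} (which furnishes the factor $\varepsilon^\gamma N_5(t)$), producing a bound of the form $C[N_4+N_4^2+N_5+N_5^2]\,\|\nabla(\tilde U_\varepsilon,\tilde V_\varepsilon)\|_2^2$ plus a small multiple of the dissipation plus an $\varepsilon^\gamma$-order remainder.

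The crux is the third, \emph{fully nonlinear}, group arising from $\nabla[(V_\varepsilon\cdot\nabla)\tilde U_\varepsilon]$ and $\nabla[(U_\varepsilon\cdot\nabla)\tilde V_\varepsilon]$. Writing $\nabla[(V_\varepsilon\cdot\nabla)\tilde U_\varepsilon]=(\nabla V_\varepsilon\cdot\nabla)\tilde U_\varepsilon+(V_\varepsilon\cdot\nabla)\nabla\tilde U_\varepsilon$, the transport-type second piece becomes harmless after integration by parts once the divergence-free structure is used, while the first piece yields the dangerous term $\int_\Omega(\nabla V_\varepsilon\cdot\nabla)\tilde U_\varepsilon\cdot\nabla\tilde U_\varepsilon\,d\Omega$. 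Since the vertical diffusion is only of strength $\varepsilon^{\alpha-2}$, one cannot spend vertical second derivatives freely; instead I would estimate this term anisotropically through Lemma \ref{lem3.1}, expressing $U_{3,\varepsilon},V_{3,\varepsilon}$ via the horizontal divergence by $(4.2)_5$ and $(4.2)_6$, so as to bound it by $C_1\big(\|\nabla(\tilde U_\varepsilon,\tilde V_\varepsilon)\|_2^2+\varepsilon^2\|\nabla(U_{3,\varepsilon},V_{3,\varepsilon})\|_2^2\big)$ times the top-order dissipation. Invoking the smallness hypothesis $\sup_s(\|\nabla(\tilde U_\varepsilon,\tilde V_\varepsilon)\|_2^2+\varepsilon^2\|\nabla U_{3,\varepsilon}\|_2^2+\varepsilon^2\|\nabla V_{3,\varepsilon}\|_2^2)\le\delta_0^2$ with $\delta_0=\sqrt{1/(3C_1)}$ forces the prefactor to be at most $\tfrac13$, so this term is absorbed into the left-hand dissipation and a positive fraction of it survives. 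I expect this absorption step — reconciling the weak vertical diffusion with the top-order nonlinearity and pinning down the exact threshold $C_1\delta_0^2\le\tfrac13$ — to be the main technical obstacle.

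Assembling the three groups yields a differential inequality
\begin{equation*}
\frac{d}{dt}\big\|\nabla(\tilde U_\varepsilon,\varepsilon U_{3,\varepsilon},\tilde V_\varepsilon,\varepsilon V_{3,\varepsilon})\big\|_2^2 + (\text{top-order dissipation}) \le C\,\Phi(t)\,\big\|\nabla(\tilde U_\varepsilon,\varepsilon U_{3,\varepsilon},\tilde V_\varepsilon,\varepsilon V_{3,\varepsilon})\big\|_2^2 + \varepsilon^\gamma\,\Psi(t),
\end{equation*}
where $\Phi$ and $\Psi$ are built from $N_4,N_5$ and their squares. Since the two systems share the same initial data, the difference vanishes at $t=0$, so $\nabla(\tilde U_\varepsilon,\varepsilon U_{3,\varepsilon},\tilde V_\varepsilon,\varepsilon V_{3,\varepsilon})(0)=0$; Grönwall's inequality then delivers the stated bound with the prefactor $\varepsilon^\gamma$ and $N_6(t)$ of the announced exponential form.
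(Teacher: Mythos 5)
Your proposal is correct and follows essentially the same route as the paper: the paper tests the difference system \eqref{4.2} with $-\Delta\tilde U_\varepsilon$, $-\Delta U_{3,\varepsilon}$, $-\Delta\tilde V_\varepsilon$, $-\Delta V_{3,\varepsilon}$ (equivalent, after integration by parts in the periodic setting, to your "apply $\nabla$ and test with $\nabla$" scheme), decomposes the right-hand side into the same three groups of forcing, linear and fully nonlinear terms estimated via Lemma \ref{lem3.1} and Young's inequality, and absorbs the top-order nonlinear contribution $C_1 H_6(t)F(t)$ into the dissipation using precisely the smallness threshold $\delta_0=\sqrt{1/(3C_1)}$ before closing with Grönwall and the vanishing initial data. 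No substantive difference in method or in the source of the $\varepsilon^\gamma$ rate.
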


\begin{proof}
One can take the inner product of $(4.2)_1$, $(4.2)_2$, $(4.2)_3$ and $(4.2)_4$ in $L^2(\Omega)$ with $- \Delta {{\tilde U}_\varepsilon }$, $- \Delta {U_{3,\varepsilon }}$, $- \Delta {{\tilde V}_\varepsilon }$ and $- \Delta {V_{3,\varepsilon }}$, respectively, and perform integration by parts, which gives rise to the integral equality
\begin{equation*}
    \begin{aligned}
                  \frac{1}{2}&\frac{d}{{dt}}\left( {\Vert {\nabla ({{\tilde U}_\varepsilon },\varepsilon {U_{3,\varepsilon }},{{\tilde V}_\varepsilon },\varepsilon {V_{3,\varepsilon }})} \Vert_2^2} \right)\\
                  &+ \left( {\Vert {\nabla {\nabla _H}{{\tilde U}_\varepsilon }} \Vert_2^2 + {\varepsilon ^{\alpha  - 2}}\Vert {\nabla {\partial _z}{{\tilde U}_\varepsilon }} \Vert_2^2 + {\varepsilon ^2} \Vert{\nabla {\nabla _H}{U_{3,\varepsilon }}} \Vert_2^2 + {\varepsilon ^\alpha }\Vert {\nabla {\partial _z}{U_{3,\varepsilon }}} \Vert_2^2} \right)\\
                  &+ \left( {\Vert {\nabla {\nabla _H}{{\tilde V}_\varepsilon }} \Vert_2^2 + {\varepsilon ^{\alpha  - 2}}\Vert {\nabla {\partial _z}{{\tilde V}_\varepsilon }} \Vert_2^2 + {\varepsilon ^2}\Vert {\nabla {\nabla _H}{V_{3,\varepsilon }}} \Vert_2^2 + {\varepsilon ^\alpha }\Vert {\nabla {\partial _z}{V_{3,\varepsilon }}} \Vert_2^2} \right)\\
    \end{aligned}
\end{equation*}
\begin{equation} \label{4.3}
        \begin{aligned}
                  =&\int_\Omega  {[(B \cdot \nabla ){{\tilde U}_\varepsilon } + ({V_\varepsilon } \cdot \nabla )\tilde A + ({V_\varepsilon } \cdot \nabla ){{\tilde U}_\varepsilon }] \cdot \Delta {{\tilde U}_\varepsilon }d\Omega} \\
                  &+ {\varepsilon ^2}\int_\Omega  {(B \cdot \nabla {A_3} + B \cdot \nabla {U_{3,\varepsilon }} + {V_\varepsilon } \cdot \nabla {A_3} + {V_\varepsilon } \cdot \nabla {U_{3,\varepsilon }})\Delta {U_{3,\varepsilon }}d\Omega} \\
                  &+ \int_\Omega  {[(A \cdot \nabla ){{\tilde V}_\varepsilon } + ({U_\varepsilon } \cdot \nabla )\tilde B + ({U_\varepsilon } \cdot \nabla ){{\tilde V}_\varepsilon }] \cdot \Delta {{\tilde V}_\varepsilon }d\Omega} \\
                  &+ {\varepsilon ^2}\int_\Omega  {(A \cdot \nabla {B_3} + A \cdot \nabla {V_{3,\varepsilon }} + {U_\varepsilon } \cdot \nabla {B_3} + {U_\varepsilon } \cdot \nabla {V_{3,\varepsilon }})\Delta {V_{3,\varepsilon }}d\Omega} \\
                  &+ \int_\Omega  {[{\varepsilon ^2}({\partial _t}{A_3} - {\Delta _H}{A_3})\Delta {U_{3,\varepsilon }}]d\Omega} + \int_\Omega  {[{\varepsilon ^\alpha }({\nabla _H} \cdot {\partial _z}{\tilde A})\Delta {U_{3,\varepsilon }} - {\varepsilon ^{\alpha  - 2}}({\partial _{zz}}{\tilde A}) \cdot \Delta {{\tilde U}_\varepsilon }]d\Omega} \\
                  &+ \int_\Omega  {[{\varepsilon ^2}({\partial _t}{B_3} - {\Delta _H}{B_3})\Delta {V_{3,\varepsilon }}]d\Omega} + \int_\Omega  {[{\varepsilon ^\alpha }({\nabla _H} \cdot {\partial _z}{\tilde B})\Delta {V_{3,\varepsilon }} - {\varepsilon ^{\alpha  - 2}}({\partial _{zz}}{\tilde B}) \cdot \Delta {{\tilde V}_\varepsilon }]d\Omega}\\
=:& {J_1} + {J_2} + {J_3} + {J_4} + {J_5} + {J_6} + {J_7} + {J_8}.
        \end{aligned}
\end{equation}
Now let us estimate each factor on the right-hand side of \eqref{4.3}. To estimate the first integral factor $J_1$, we can divide it into three terms, namely, ${J_{11}}$, ${J_{12}}$ and ${J_{13}}$. On the basis of the incompressible constraints, Lemma \ref{lem3.1} and Young's inequality, these integral terms can be estimated as
\begin{equation} \label{4.4}
        \begin{aligned}
                  {J_{11}} =& \int_\Omega  {(B \cdot \nabla ){{\tilde U}_\varepsilon } \cdot \Delta {{\tilde U}_\varepsilon }d\Omega} \\
                  =& \int_\Omega  {\left[ {(\tilde B \cdot {\nabla _H}){{\tilde U}_\varepsilon } - ({\partial _z}{{\tilde U}_\varepsilon })\int_0^z {({\nabla _H} \cdot \tilde B)d\xi } } \right] \cdot {\Delta _H}{{\tilde U}_\varepsilon }d\Omega} \\
                  &+ \int_\Omega  {[( - {\partial _z}\tilde B \cdot {\nabla _H}){{\tilde U}_\varepsilon } - 2(\tilde B \cdot {\nabla _H}){\partial _z}{{\tilde U}_\varepsilon }] \cdot {\partial _z}{{\tilde U}_\varepsilon }d\Omega}\\
                  \le& \int_M {\left( {\int_{ - 1}^1 {(\vert {\tilde B} \vert + \vert {{\partial _z}\tilde B} \vert)dz} } \right)\left( {\int_{ - 1}^1 {\vert {{\nabla _H}{{\tilde U}_\varepsilon }} \vert\vert {{\Delta _H}{{\tilde U}_\varepsilon }} \vert dz} } \right)dxdy} \\
                  &+ \int_M {\left( {\int_{ - 1}^1 {\vert {{\nabla _H}\tilde B} \vert dz} } \right)\left( {\int_{ - 1}^1 {\vert {{\partial _z}{{\tilde U}_\varepsilon }} \vert\vert {{\Delta _H}{{\tilde U}_\varepsilon }} \vert dz} } \right)dxdy} \\
                  &+ \int_M {\left( {\int_{ - 1}^1 {(\vert {{\nabla _H}{{\tilde U}_\varepsilon }} \vert + \vert {{\nabla _H}{\partial _z}{{\tilde U}_\varepsilon }} \vert)dz} } \right)\left( {\int_{ - 1}^1 {\vert {{\partial _z}\tilde B} \vert\vert {{\partial _z}{{\tilde U}_\varepsilon }} \vert dz} } \right)dxdy} \\
                  &+ 2\int_M {\left( {\int_{ - 1}^1 {(\vert {\tilde B} \vert + \vert {{\partial _z}\tilde B} \vert)dz} } \right)\left( {\int_{ - 1}^1 {\vert {{\nabla _H}{\partial _z}{{\tilde U}_\varepsilon }} \vert\vert {{\partial _z}{{\tilde U}_\varepsilon }} \vert dz} } \right)dxdy} \\
                  \le& C\left[ {\Vert {\tilde B} \Vert_2^{\frac{1}{2}}\left( {\Vert {\tilde B} \Vert_2^{\frac{1}{2}} + \Vert {{\nabla _H}\tilde B} \Vert_2^{\frac{1}{2}}} \right) + \Vert {{\partial _z}\tilde B} \Vert_2^{\frac{1}{2}}\left( {\Vert {{\partial _z}\tilde B} \Vert_2^{\frac{1}{2}} + \Vert {{\nabla _H}{\partial _z}\tilde B} \Vert_2^{\frac{1}{2}}} \right)} \right]\\
                  &\times \Vert {{\nabla _H}{{\tilde U}_\varepsilon }} \Vert_2^{\frac{1}{2}}\left( {\Vert {{\nabla _H}{{\tilde U}_\varepsilon }} \Vert_2^{\frac{1}{2}} + \Vert {\nabla _H^2{{\tilde U}_\varepsilon }} \Vert_2^{\frac{1}{2}}} \right){\Vert {{\Delta _H}{{\tilde U}_\varepsilon }} \Vert_2}\\
                  &+ C\left( {{{\Vert {{\nabla _H}\tilde B} \Vert}_2} + \Vert {{\nabla _H}\tilde B} \Vert_2^{\frac{1}{2}}\Vert {\nabla _H^2\tilde B} \Vert_2^{\frac{1}{2}}} \right)\Vert {{\partial _z}{{\tilde U}_\varepsilon }} \Vert_2^{\frac{1}{2}}\Vert {{\nabla _H}{\partial _z}{{\tilde U}_\varepsilon }} \Vert_2^{\frac{1}{2}}{\Vert {{\Delta _H}{{\tilde U}_\varepsilon }} \Vert_2}\\
                  &+ C{\Vert {{\partial _z}{{\tilde U}_\varepsilon }} \Vert_2}{\Vert {{\Delta _H}{{\tilde U}_\varepsilon }} \Vert_2}\left( {{{\Vert {{\nabla _H}\tilde B} \Vert}_2} + \Vert {{\nabla _H}\tilde B} \Vert_2^{\frac{1}{2}}\Vert {\nabla _H^2\tilde B} \Vert_2^{\frac{1}{2}}} \right)\\
                  &+ C\left( {{{\Vert {{\nabla _H}{{\tilde U}_\varepsilon }} \Vert}_2} + {{\Vert {{\nabla _H}{\partial _z}{{\tilde U}_\varepsilon }} \Vert}_2}} \right)\Vert {{\partial _z}\tilde B} \Vert_2^{\frac{1}{2}}\left( {\Vert {{\partial _z}\tilde B} \Vert_2^{\frac{1}{2}} + \Vert {{\nabla _H}{\partial _z}\tilde B} \Vert_2^{\frac{1}{2}}} \right)\\
                  &\times \Vert {{\partial _z}{{\tilde U}_\varepsilon }} \Vert_2^{\frac{1}{2}}\left( {\Vert {{\partial _z}{{\tilde U}_\varepsilon }} \Vert_2^{\frac{1}{2}} + \Vert {{\nabla _H}{\partial _z}{{\tilde U}_\varepsilon }} \Vert_2^{\frac{1}{2}}} \right)\\
        \end{aligned}
\end{equation}
\begin{equation*}
    \begin{aligned}
                  &+ C\left[ {\Vert {\tilde B} \Vert_2^{\frac{1}{2}}\left( {\Vert {\tilde B} \Vert_2^{\frac{1}{2}} + \Vert {{\nabla _H}\tilde B} \Vert_2^{\frac{1}{2}}} \right) + \Vert {{\partial _z}\tilde B} \Vert_2^{\frac{1}{2}}\left( {\Vert {{\partial _z}\tilde B} \Vert_2^{\frac{1}{2}} + \Vert {{\nabla _H}{\partial _z}\tilde B} \Vert_2^{\frac{1}{2}}} \right)} \right]\\
                  &\times \Vert {{\partial _z}{{\tilde U}_\varepsilon }} \Vert_2^{\frac{1}{2}}\left( {\Vert{{\partial _z}{{\tilde U}_\varepsilon }} \Vert_2^{\frac{1}{2}} + \Vert {{\nabla _H}{\partial _z}{{\tilde U}_\varepsilon }} \Vert_2^{\frac{1}{2}}} \right){\Vert {{\nabla _H}{\partial _z}{{\tilde U}_\varepsilon }} \Vert_2}\\
                  \le& C\left( {1 + \Vert {\tilde B} \Vert_2^2 + \Vert {{\nabla _H}\tilde B} \Vert_2^2 + \Vert {\nabla \tilde B} \Vert_2^2 + \Vert {\nabla {\nabla _H}\tilde B} \Vert_2^2} \right)\Vert {\nabla {{\tilde U}_\varepsilon }} \Vert_2^2\\
                  &+ C\left( {\Vert {\tilde B} \Vert_2^4 + \Vert {\tilde B} \Vert_2^2\Vert {{\nabla _H}\tilde B} \Vert_2^2 + \Vert {\nabla \tilde B} \Vert_2^4 + \Vert {\nabla \tilde B} \Vert_2^2\Vert {\nabla {\nabla _H}\tilde B} \Vert_2^2} \right)\Vert {\nabla {{\tilde U}_\varepsilon }} \Vert_2^2 + \frac{1}{{102}}\Vert {\nabla {\nabla _H}{{\tilde U}_\varepsilon }} \Vert_2^2,
    \end{aligned}
\end{equation*}

\begin{equation} \label{4.5}
        \begin{aligned}
                 {J_{12}} =& \int_\Omega  {({V_\varepsilon } \cdot \nabla )\tilde A \cdot \Delta {{\tilde U}_\varepsilon }d\Omega} \\
                 =& \int_\Omega  {\left[ {({{\tilde V}_\varepsilon } \cdot {\nabla _H})\tilde A - ({\partial _z}\tilde A)\int_0^z {({\nabla _H} \cdot {{\tilde V}_\varepsilon })d\xi } } \right] \cdot {\Delta _H}{{\tilde U}_\varepsilon }d\Omega} \\
                 &+ \int_\Omega  {[({\nabla _H} \cdot {{\tilde V}_\varepsilon }){\partial _z}\tilde A - ({\partial _z}{{\tilde V}_\varepsilon } \cdot {\nabla _H})\tilde A] \cdot {\partial _z}{{\tilde U}_\varepsilon }d\Omega} \\
                 &+ \int_\Omega  {\left[ {({\partial _{zz}}\tilde A)\int_0^z {({\nabla _H} \cdot {{\tilde V}_\varepsilon })d\xi }  - ({{\tilde V}_\varepsilon } \cdot {\nabla _H}){\partial _z}\tilde A} \right] \cdot {\partial _z}{{\tilde U}_\varepsilon }d\Omega}\\
                 \le& \int_M {\left( {\int_{ - 1}^1 {(\vert {{{\tilde V}_\varepsilon }} \vert + \vert {{\partial _z}{{\tilde V}_\varepsilon }} \vert)dz} } \right)\left( {\int_{ - 1}^1 {\vert {{\nabla _H}\tilde A} \vert\vert {{\Delta _H}{{\tilde U}_\varepsilon }} \vert dz} } \right)dxdy} \\
                 &+ \int_M {\left( {\int_{ - 1}^1 {\vert {{\nabla _H}{{\tilde V}_\varepsilon }} \vert dz} } \right)\left( {\int_{ - 1}^1 {\vert {{\partial _z}\tilde A} \vert\vert {{\Delta _H}{{\tilde U}_\varepsilon }} \vert dz} } \right)dxdy} \\
                 &+ \int_M {\left( {\int_{ - 1}^1 {(\vert {{\nabla _H}{{\tilde V}_\varepsilon }} \vert + \vert {{\nabla _H}{\partial _z}{{\tilde V}_\varepsilon }} \vert)dz} } \right)\left( {\int_{ - 1}^1 {\vert {{\partial _z}\tilde A} \vert\vert {{\partial _z}{{\tilde U}_\varepsilon }} \vert dz} } \right)dxdy} \\
                 &+ \int_M {\left( {\int_{ - 1}^1 {(\vert {{\nabla _H}\tilde A} \vert + \vert {{\nabla _H}{\partial _z}\tilde A} \vert)dz} } \right)\left( {\int_{ - 1}^1 {\vert {{\partial _z}{{\tilde V}_\varepsilon }} \vert\vert {{\partial _z}{{\tilde U}_\varepsilon }} \vert dz} } \right)dxdy} \\
                 &+ \int_M {\left( {\int_{ - 1}^1 {\vert {{\nabla _H}{{\tilde V}_\varepsilon }} \vert dz} } \right)\left( {\int_{ - 1}^1 {\vert {{\partial _{zz}}\tilde A} \vert\vert {{\partial _z}{{\tilde U}_\varepsilon }} \vert dz} } \right)dxdy} \\
                 &+ \int_M {\left( {\int_{ - 1}^1 {(\vert {{{\tilde V}_\varepsilon }} \vert + \vert {{\partial _z}{{\tilde V}_\varepsilon }} \vert)dz} } \right)\left( {\int_{ - 1}^1 {\vert {{\partial _z}{{\tilde U}_\varepsilon }} \vert\vert {{\nabla _H}{\partial _z}\tilde A} \vert dz} } \right)dxdy} \\
                 \le& C\left[ {\Vert {{{\tilde V}_\varepsilon }} \Vert_2^{\frac{1}{2}}\left( {\Vert {{{\tilde V}_\varepsilon }} \Vert_2^{\frac{1}{2}} + \Vert {{\nabla _H}{{\tilde V}_\varepsilon }} \Vert_2^{\frac{1}{2}}} \right) + \Vert {{\partial _z}{{\tilde V}_\varepsilon }} \Vert_2^{\frac{1}{2}}\left( {\Vert {{\partial _z}{{\tilde V}_\varepsilon }} \Vert_2^{\frac{1}{2}} + \Vert {{\nabla _H}{\partial _z}{{\tilde V}_\varepsilon }} \Vert_2^{\frac{1}{2}}} \right)} \right]\\
                 &\times \Vert {{\nabla _H}\tilde A} \Vert_2^{\frac{1}{2}}\left( {\Vert {{\nabla _H}\tilde A} \Vert_2^{\frac{1}{2}} + \Vert {\nabla _H^2\tilde A} \Vert_2^{\frac{1}{2}}} \right){\Vert {{\Delta _H}{{\tilde U}_\varepsilon }} \Vert_2}\\
                 &+ C\Vert {{\nabla _H}{{\tilde V}_\varepsilon }} \Vert_2^{\frac{1}{2}}\left( {\Vert {{\nabla _H}{{\tilde V}_\varepsilon }} \Vert_2^{\frac{1}{2}} + \Vert {\nabla _H^2{{\tilde V}_\varepsilon }} \Vert_2^{\frac{1}{2}}} \right)\Vert {{\partial _z}\tilde A} \Vert_2^{\frac{1}{2}}\Vert {{\nabla _H}{\partial _z}\tilde A} \Vert_2^{\frac{1}{2}}{\Vert {{\Delta _H}{{\tilde U}_\varepsilon }} \Vert_2}\\
                 &+ C{\Vert {{\partial _z}\tilde A} \Vert_2}{\Vert {{\Delta _H}{{\tilde U}_\varepsilon }} \Vert_2}\Vert {{\nabla _H}{{\tilde V}_\varepsilon }} \Vert_2^{\frac{1}{2}}\left( {\Vert {{\nabla _H}{{\tilde V}_\varepsilon }} \Vert_2^{\frac{1}{2}} + \Vert {\nabla _H^2{{\tilde V}_\varepsilon }} \Vert_2^{\frac{1}{2}}} \right)\\
                 &+ C\left( {{{\Vert {{\nabla _H}{{\tilde V}_\varepsilon }} \Vert}_2} + {{\Vert {{\nabla _H}{\partial _z}{{\tilde V}_\varepsilon }} \Vert}_2}} \right)\Vert {{\partial _z}\tilde A} \Vert_2^{\frac{1}{2}}\left( {\Vert {{\partial _z}\tilde A} \Vert_2^{\frac{1}{2}} + \Vert {{\nabla _H}{\partial _z}\tilde A} \Vert_2^{\frac{1}{2}}} \right)\\
                 &\times \Vert {{\partial _z}{{\tilde U}_\varepsilon }} \Vert_2^{\frac{1}{2}}\left( {\Vert {{\partial _z}{{\tilde U}_\varepsilon }} \Vert_2^{\frac{1}{2}} + \Vert {{\nabla _H}{\partial _z}{{\tilde U}_\varepsilon }} \Vert_2^{\frac{1}{2}}} \right)\\
                 &+ C\left( {{{\Vert {{\nabla _H}\tilde A} \Vert}_2} + {{\Vert {{\nabla _H}{\partial _z}\tilde A} \Vert}_2}} \right)\Vert {{\partial _z}{{\tilde V}_\varepsilon }} \Vert_2^{\frac{1}{2}}\left( {\Vert {{\partial _z}{{\tilde V}_\varepsilon }} \Vert_2^{\frac{1}{2}} + \Vert {{\nabla _H}{\partial _z}{{\tilde V}_\varepsilon }} \Vert_2^{\frac{1}{2}}} \right)\\
        \end{aligned}
\end{equation}
\begin{equation*}
    \begin{aligned}
                 &\times \Vert {{\partial _z}{{\tilde U}_\varepsilon }} \Vert_2^{\frac{1}{2}}\left( {\Vert {{\partial _z}{{\tilde U}_\varepsilon }} \Vert_2^{\frac{1}{2}} + \Vert {{\nabla _H}{\partial _z}{{\tilde U}_\varepsilon }} \Vert_2^{\frac{1}{2}}} \right)\\
                 &+ C\Vert {{\nabla _H}{{\tilde V}_\varepsilon }} \Vert_2^{\frac{1}{2}}\left( {\Vert {{\nabla _H}{{\tilde V}_\varepsilon }} \Vert_2^{\frac{1}{2}} + \Vert {\nabla _H^2{{\tilde V}_\varepsilon }} \Vert_2^{\frac{1}{2}}} \right)\Vert {{\partial _z}{{\tilde U}_\varepsilon }} \Vert_2^{\frac{1}{2}}\Vert {{\nabla _H}{\partial _z}{{\tilde U}_\varepsilon }} \Vert_2^{\frac{1}{2}}{\Vert {{\partial _{zz}}\tilde A} \Vert_2}\\
                 &+ C{\Vert {{\partial _z}{{\tilde U}_\varepsilon }} \Vert_2}{\Vert {{\partial _{zz}}\tilde A} \Vert_2}\Vert {{\nabla _H}{{\tilde V}_\varepsilon }} \Vert_2^{\frac{1}{2}}\left( {\Vert {{\nabla _H}{{\tilde V}_\varepsilon }} \Vert_2^{\frac{1}{2}} + \Vert {\nabla _H^2{{\tilde V}_\varepsilon }} \Vert_2^{\frac{1}{2}}} \right)\\
                 &+ C\left[ {\Vert {{{\tilde V}_\varepsilon }} \Vert_2^{\frac{1}{2}}\left( {\Vert {{{\tilde V}_\varepsilon }} \Vert_2^{\frac{1}{2}} + \Vert {{\nabla _H}{{\tilde V}_\varepsilon }} \Vert_2^{\frac{1}{2}}} \right) + \Vert {{\partial _z}{{\tilde V}_\varepsilon }} \Vert_2^{\frac{1}{2}}\left( {\Vert {{\partial _z}{{\tilde V}_\varepsilon }} \Vert_2^{\frac{1}{2}} + \Vert {{\nabla _H}{\partial _z}{{\tilde V}_\varepsilon }} \Vert_2^{\frac{1}{2}}} \right)} \right]\\
                 &\times \Vert {{\partial _z}{{\tilde U}_\varepsilon }} \Vert_2^{\frac{1}{2}}\left( {\Vert {{\partial _z}{{\tilde U}_\varepsilon }} \Vert_2^{\frac{1}{2}} + \Vert {{\nabla _H}{\partial _z}{{\tilde U}_\varepsilon }} \Vert_2^{\frac{1}{2}}} \right){\Vert {{\nabla _H}{\partial _z}\tilde A} \Vert_2}\\
                 \le& C\left( {1 + \Vert {{\nabla _H}\tilde A} \Vert_2^2{\rm{ + }}\Vert {\nabla \tilde A} \Vert_2^2 + \Vert {\nabla {\nabla _H}\tilde A} \Vert_2^2 + \Vert {{\nabla ^2}\tilde A} \Vert_2^2} \right)\left( {\Vert {\nabla {{\tilde U}_\varepsilon }} \Vert_2^2 + \Vert {\nabla {{\tilde V}_\varepsilon }} \Vert_2^2} \right)\\
                 &+ C\left( {\Vert {\nabla \tilde A} \Vert_2^4 + \Vert {\nabla \tilde A} \Vert_2^2\Vert {\nabla {\nabla _H}\tilde A} \Vert_2^2} \right)\left( {\Vert {\nabla {{\tilde U}_\varepsilon }} \Vert_2^2 + \Vert {\nabla {{\tilde V}_\varepsilon }} \Vert_2^2} \right)\\
                 &+ C\Vert {{{\tilde V}_\varepsilon }} \Vert_2^2\left( {1 + \Vert {\nabla {\nabla _H}\tilde A} \Vert_2^2} \right) + \frac{1}{{102}}\left( {\Vert {\nabla {\nabla _H}{{\tilde U}_\varepsilon }} \Vert_2^2 + \Vert {\nabla {\nabla _H}{{\tilde V}_\varepsilon }} \Vert_2^2} \right),
    \end{aligned}
\end{equation*}
and
\begin{equation} \label{4.6}
        \begin{aligned}
                {J_{13}} =& \int_\Omega  {({V_\varepsilon } \cdot \nabla ){{\tilde U}_\varepsilon } \cdot \Delta {{\tilde U}_\varepsilon }d\Omega} \\
                =& \int_\Omega  {\left[ {({{\tilde V}_\varepsilon } \cdot {\nabla _H}){{\tilde U}_\varepsilon } - ({\partial _z}{{\tilde U}_\varepsilon })\int_0^z {({\nabla _H} \cdot {{\tilde V}_\varepsilon })d\xi } } \right] \cdot {\Delta _H}{{\tilde U}_\varepsilon }d\Omega} \\
                &+ \int_\Omega  {[({\nabla _H} \cdot {{\tilde V}_\varepsilon }){\partial _z}{{\tilde U}_\varepsilon } - ({\partial _z}{{\tilde V}_\varepsilon } \cdot {\nabla _H}){{\tilde U}_\varepsilon }] \cdot {\partial _z}{{\tilde U}_\varepsilon }d\Omega} \\
                \le& \int_M {\left( {\int_{ - 1}^1 {(\vert {{{\tilde V}_\varepsilon }} \vert + \vert {{\partial _z}{{\tilde V}_\varepsilon }} \vert)dz} } \right)\left( {\int_{ - 1}^1 {\vert {{\nabla _H}{{\tilde U}_\varepsilon }} \vert\vert {{\Delta _H}{{\tilde U}_\varepsilon }} \vert dz} } \right)dxdy} \\
                &+ \int_M {\left( {\int_{ - 1}^1 {\vert {{\nabla _H}{{\tilde V}_\varepsilon }} \vert dz} } \right)\left( {\int_{ - 1}^1 {\vert {{\partial _z}{{\tilde U}_\varepsilon }} \vert\vert {{\Delta _H}{{\tilde U}_\varepsilon }} \vert dz} } \right)dxdy} \\
                &+ \int_M {\left( {\int_{ - 1}^1 {(\vert {{\nabla _H}{{\tilde V}_\varepsilon }} \vert + \vert {{\nabla _H}{\partial _z}{{\tilde V}_\varepsilon }} \vert)dz} } \right)\left( {\int_{ - 1}^1 {\vert {{\partial _z}{{\tilde U}_\varepsilon }} \vert^2 dz} } \right)dxdy} \\
                &+ \int_M {\left( {\int_{ - 1}^1 {(\vert {{\nabla _H}{{\tilde U}_\varepsilon }} \vert + \vert {{\nabla _H}{\partial _z}{{\tilde U}_\varepsilon }} \vert)dz} } \right)\left( {\int_{ - 1}^1 {\vert {{\partial _z}{{\tilde U}_\varepsilon }} \vert\vert {{\partial _z}{{\tilde V}_\varepsilon }} \vert dz} } \right)dxdy} \\
                \le& C\left[ {\Vert {{{\tilde V}_\varepsilon }} \Vert_2^{\frac{1}{2}}\left( {\Vert {{{\tilde V}_\varepsilon }} \Vert_2^{\frac{1}{2}} + \Vert {{\nabla _H}{{\tilde V}_\varepsilon }} \Vert_2^{\frac{1}{2}}} \right) + \Vert {{\partial _z}{{\tilde V}_\varepsilon }} \Vert_2^{\frac{1}{2}}\left( {\Vert {{\partial _z}{{\tilde V}_\varepsilon }} \Vert_2^{\frac{1}{2}} + \Vert {{\nabla _H}{\partial _z}{{\tilde V}_\varepsilon }} \Vert_2^{\frac{1}{2}}} \right)} \right]\\
                &\times \Vert {{\nabla _H}{{\tilde U}_\varepsilon }} \Vert_2^{\frac{1}{2}}\left( {\Vert {{\nabla _H}{{\tilde U}_\varepsilon }} \Vert_2^{\frac{1}{2}} + \Vert {\nabla _H^2{{\tilde U}_\varepsilon }} \Vert_2^{\frac{1}{2}}} \right){\Vert {{\Delta _H}{{\tilde U}_\varepsilon }} \Vert_2}\\
                &+ C\Vert {{\nabla _H}{{\tilde V}_\varepsilon }} \Vert_2^{\frac{1}{2}}\left( {\Vert {{\nabla _H}{{\tilde V}_\varepsilon }} \Vert_2^{\frac{1}{2}} + \Vert {\nabla _H^2{{\tilde V}_\varepsilon }} \Vert_2^{\frac{1}{2}}} \right)\Vert {{\partial _z}{{\tilde U}_\varepsilon }} \Vert_2^{\frac{1}{2}}\Vert {{\nabla _H}{\partial _z}{{\tilde U}_\varepsilon }} \Vert_2^{\frac{1}{2}}{\Vert {{\Delta _H}{{\tilde U}_\varepsilon }} \Vert_2}\\
                &+ C{\Vert {{\partial _z}{{\tilde U}_\varepsilon }} \Vert_2}{\Vert {{\Delta _H}{{\tilde U}_\varepsilon }} \Vert_2}\Vert {{\nabla _H}{{\tilde V}_\varepsilon }} \Vert_2^{\frac{1}{2}}\left( {\Vert {{\nabla _H}{{\tilde V}_\varepsilon }} \Vert_2^{\frac{1}{2}} + \Vert {\nabla _H^2{{\tilde V}_\varepsilon }} \Vert_2^{\frac{1}{2}}} \right)\\
                &+ C\left( {{{\Vert {{\nabla _H}{{\tilde V}_\varepsilon }} \Vert}_2} + {{\Vert {{\nabla _H}{\partial _z}{{\tilde V}_\varepsilon }} \Vert}_2}} \right){\Vert {{\partial _z}{{\tilde U}_\varepsilon }} \Vert_2}\left( {{{\Vert {{\partial _z}{{\tilde U}_\varepsilon }} \Vert}_2} + {{\Vert {{\nabla _H}{\partial _z}{{\tilde U}_\varepsilon }} \Vert}_2}} \right)\\
                &+ C\left( {{{\Vert {{\nabla _H}{{\tilde U}_\varepsilon }} \Vert}_2} + {{\Vert {{\nabla _H}{\partial _z}{{\tilde U}_\varepsilon }} \Vert}_2}} \right)\Vert {{\partial _z}{{\tilde U}_\varepsilon }} \Vert_2^{\frac{1}{2}}\left( {\Vert {{\partial _z}{{\tilde U}_\varepsilon }} \Vert_2^{\frac{1}{2}} + \Vert {{\nabla _H}{\partial _z}{{\tilde U}_\varepsilon }} \Vert_2^{\frac{1}{2}}} \right)\\
                &\times \Vert {{\partial _z}{{\tilde V}_\varepsilon }} \Vert_2^{\frac{1}{2}}\left( {\Vert {{\partial _z}{{\tilde V}_\varepsilon }} \Vert_2^{\frac{1}{2}} + \Vert {{\nabla _H}{\partial _z}{{\tilde V}_\varepsilon }} \Vert_2^{\frac{1}{2}}} \right)\\
 \end{aligned}
\end{equation}
\begin{equation*}
    \begin{aligned}
                \le& C\left( {1 + \Vert {{\nabla _H}{{\tilde U}_\varepsilon }} \Vert_2^2 + \Vert {\nabla {\nabla _H}{{\tilde U}_\varepsilon }} \Vert_2^2 + \Vert {\nabla {\nabla _H}{{\tilde V}_\varepsilon }} \Vert_2^2} \right)\left( {\Vert {\nabla {{\tilde U}_\varepsilon }} \Vert_2^2 + \Vert {\nabla {{\tilde V}_\varepsilon }} \Vert_2^2} \right)\\
                &+ C\Vert {\nabla {{\tilde U}_\varepsilon }} \Vert_2^2\left( {\Vert {{\nabla _H}{{\tilde V}_\varepsilon }} \Vert_2^2 + \Vert {{{\tilde V}_\varepsilon }} \Vert_2^4 + \Vert {{{\tilde V}_\varepsilon }} \Vert_2^2\Vert {{\nabla _H}{{\tilde V}_\varepsilon }} \Vert_2^2} \right) + \frac{1}{{102}}\left( {\Vert {\nabla {\nabla _H}{{\tilde U}_\varepsilon }} \Vert_2^2 + \Vert {\nabla {\nabla _H}{{\tilde V}_\varepsilon }} \Vert_2^2} \right),
    \end{aligned}
\end{equation*}
respectively, where we have applied the boundary condition \eqref{2.4} and symmetry condition \eqref{2.6}. Utilizing the divergence-free conditions, Lemma \ref{lem3.1} and Young's inequality again, we have from the third integral factor ${J_3}$ that
\begin{equation} \label{4.7}
        \begin{aligned}
                  {J_3} =& \int_\Omega  {[(A \cdot \nabla ){{\tilde V}_\varepsilon } + ({U_\varepsilon } \cdot \nabla )\tilde B + ({U_\varepsilon } \cdot \nabla ){{\tilde V}_\varepsilon }] \cdot \Delta {{\tilde V}_\varepsilon }d\Omega}\\
                  \le& C\left( {1 + \Vert {\tilde A} \Vert_2^2 + \Vert {{\nabla _H}\tilde A} \Vert_2^2 + \Vert {\nabla \tilde A} \Vert_2^2 + \Vert {\nabla {\nabla _H}\tilde A} \Vert_2^2} \right)\Vert {\nabla {{\tilde V}_\varepsilon }} \Vert_2^2\\
                  &+ C\left( {\Vert {\tilde A} \Vert_2^4 + \Vert {\tilde A} \Vert_2^2\Vert {{\nabla _H}\tilde A} \Vert_2^2 + \Vert {\nabla \tilde A} \Vert_2^4 + \Vert {\nabla \tilde A} \Vert_2^2\Vert {\nabla {\nabla _H}\tilde A} \Vert_2^2} \right)\Vert {\nabla {{\tilde V}_\varepsilon }} \Vert_2^2\\
                  &+ C\left( {1 + \Vert {{\nabla _H}\tilde B} \Vert_2^2 + \Vert {\nabla \tilde B} \Vert_2^2 + \Vert {\nabla {\nabla _H}\tilde B} \Vert_2^2 + \Vert {{\nabla ^2}\tilde B} \Vert_2^2} \right)\left( {\Vert {\nabla {{\tilde U}_\varepsilon }} \Vert_2^2 + \Vert {\nabla {{\tilde V}_\varepsilon }} \Vert_2^2} \right)\\
                  &+ C\left( {\Vert {\nabla \tilde B} \Vert_2^4 + \Vert {\nabla \tilde B} \Vert_2^2\Vert {\nabla {\nabla _H}\tilde B} \Vert_2^2} \right)\left( {\Vert {\nabla {{\tilde U}_\varepsilon }} \Vert_2^2 + \Vert {\nabla {{\tilde V}_\varepsilon }} \Vert_2^2} \right)\\
                  &+ C\left( {1 + \Vert {{\nabla _H}{{\tilde V}_\varepsilon }} \Vert_2^2 + \Vert {\nabla {\nabla _H}{{\tilde V}_\varepsilon }} \Vert_2^2 + \Vert {\nabla {\nabla _H}{{\tilde U}_\varepsilon }} \Vert_2^2} \right)\left( {\Vert {\nabla {{\tilde U}_\varepsilon }} \Vert_2^2 + \Vert {\nabla {{\tilde V}_\varepsilon }} \Vert_2^2} \right)\\
                  &+ C\Vert {\nabla {{\tilde V}_\varepsilon }} \Vert_2^2\left( {\Vert {{\nabla _H}{{\tilde U}_\varepsilon }} \Vert_2^2 + \Vert {{{\tilde U}_\varepsilon }} \Vert_2^4 + \Vert {{{\tilde U}_\varepsilon }} \Vert_2^2\Vert {{\nabla _H}{{\tilde U}_\varepsilon }} \Vert_2^2} \right)\\
                  &+ C\Vert {{{\tilde U}_\varepsilon }} \Vert_2^2\left( {1 + \Vert {\nabla {\nabla _H}\tilde B} \Vert_2^2} \right) + \frac{1}{{102}}\left( {3\Vert {\nabla {\nabla _H}{{\tilde V}_\varepsilon }} \Vert_2^2 + 2\Vert {\nabla {\nabla _H}{{\tilde U}_\varepsilon}} \Vert_2^2} \right).
 \end{aligned}
\end{equation}
To obtain the upper bound for the second integral factor ${J_2}$, we can split it into four parts, namely, ${J_{21}}$, ${J_{22}}$, ${J_{23}}$ and ${J_{24}}$, and then employ Lemma \ref{lem3.1} as well as Young's inequality to reach
\begin{equation} \label{4.8}
    \begin{aligned}
                   {J_{21}} =& {\varepsilon ^2}\int_\Omega  {(B \cdot \nabla {A_3}) \Delta {U_{3,\varepsilon }}d\Omega} \\
                   =& {\varepsilon ^2}\int_\Omega  {\left[ {({\nabla _H} \cdot \tilde A)\int_0^z {({\nabla _H} \cdot \tilde B)d\xi }  - B \cdot \int_0^z {{\nabla _H}({\nabla _H} \cdot \tilde A)d\xi } } \right] {\Delta _H}{U_{3,\varepsilon }}d\Omega} \\
                   &+ {\varepsilon ^2}\int_\Omega  {\left[ {({\nabla _H} \cdot \tilde A)({\nabla _H} \cdot \tilde B) - {\partial _z}\tilde B \cdot \int_0^z {{\nabla _H}({\nabla _H} \cdot \tilde A)d\xi } } \right]({\nabla _H} \cdot {{\tilde U}_\varepsilon })d\Omega} \\
                   &+ {\varepsilon ^2}\int_\Omega  {\left[ {({\nabla _H} \cdot {\partial _z}\tilde A)\int_0^z {({\nabla _H} \cdot \tilde B)d\xi }  - \tilde B \cdot {\nabla _H}({\nabla _H} \cdot \tilde A)} \right]({\nabla _H} \cdot {{\tilde U}_\varepsilon })d\Omega} \\
                   \le& C{\varepsilon ^2}\left( {\Vert {\tilde B} \Vert_2^4 + \Vert {\tilde B} \Vert_2^2\Vert {{\nabla _H}\tilde B} \Vert_2^2 + \Vert {\nabla {\nabla _H}\tilde A} \Vert_2^2 + \Vert {{\nabla ^2}{\nabla _H}\tilde A} \Vert_2^2 } \right)\\
                   &+ C{\varepsilon ^2}\left( {\Vert {{\nabla ^2}\tilde A} \Vert_2^2\Vert {{\nabla ^2}{\nabla _H}\tilde A} \Vert_2^2 + \Vert {{\nabla ^2}\tilde B} \Vert_2^2\Vert {{\nabla ^2}{\nabla _H}\tilde B} \Vert_2^2} \right)\\
                   &+ C\left( {{\varepsilon ^4}\Vert {{\nabla ^2}\tilde B} \Vert_2^2\Vert {{\nabla ^2}{\nabla _H}\tilde B} \Vert_2^2 + {\varepsilon ^2}\Vert {{\nabla ^2}{\nabla _H}\tilde B} \Vert_2^2} \right)\Vert {\nabla {{\tilde U}_\varepsilon }} \Vert_2^2\\
                   &+ C\left[ {1 + {\varepsilon ^2} + \Vert {\tilde B} \Vert_2^2 + \Vert {{\nabla _H}\tilde B} \Vert_2^2 + (1 + {\varepsilon ^2})\Vert {\nabla \tilde B} \Vert_2^2 + (1 + {\varepsilon ^2})\Vert {\nabla {\nabla _H}\tilde B} \Vert_2^2} \right]\Vert {\nabla {{\tilde U}_\varepsilon }} \Vert_2^2\\
                   &+ C\left[ {\Vert {\tilde B} \Vert_2^4 + \Vert {\tilde B} \Vert_2^2\Vert {{\nabla _H}\tilde B} \Vert_2^2 + (1 + {\varepsilon ^4})\Vert {\nabla \tilde B} \Vert_2^4 + (1 + {\varepsilon ^4})\Vert {\nabla \tilde B} \Vert_2^2\Vert {\nabla {\nabla _H}\tilde B} \Vert_2^2} \right]\Vert {\nabla {{\tilde U}_\varepsilon }} \Vert_2^2\\
                   &+ \frac{1}{{102}}\left( {{\varepsilon ^2}\Vert {\nabla {\nabla _H}{U_{3,\varepsilon }}} \Vert_2^2 + \Vert {\nabla {\nabla _H}{{\tilde U}_\varepsilon }} \Vert_2^2} \right),
\end{aligned}
\end{equation}
\begin{equation} \label{4.9}
        \begin{aligned}
                 {J_{22}} =& {\varepsilon ^2}\int_\Omega  {(B \cdot \nabla {U_{3,\varepsilon }}) \cdot \Delta {U_{3,\varepsilon }}d\Omega} \\
                 =& {\varepsilon ^2}\int_\Omega  {\left[ {\tilde B \cdot {\nabla _H}{U_{3,\varepsilon }} + ({\nabla _H} \cdot {{\tilde U}_\varepsilon })\int_0^z {({\nabla _H} \cdot \tilde B)d\xi } } \right] {\Delta _H}{U_{3,\varepsilon }}d\Omega} \\
                 &+ {\varepsilon ^2}\int_\Omega  {\left[ {{\partial _z}\tilde B \cdot \int_0^z {{\nabla _H}({\nabla _H} \cdot {{\tilde U}_\varepsilon })d\xi }  - 2\tilde B \cdot {\nabla _H}{\partial _z}{U_{3,\varepsilon }}} \right]{\partial _z}{U_{3,\varepsilon }}d\Omega} \\
                 \le& C\left[ {(1 + {\varepsilon ^4})\Vert {\nabla \tilde B} \Vert_2^4 + {\varepsilon ^2}\Vert {{\nabla ^2}{\nabla _H}\tilde B} \Vert_2^2 + {\varepsilon ^4}\Vert {{\nabla ^2}\tilde B} \Vert_2^2\Vert {{\nabla ^2}{\nabla _H}\tilde B} \Vert_2^2} \right]\Vert {\nabla {{\tilde U}_\varepsilon }} \Vert_2^2\\
                 &+ C{\varepsilon ^2}\Vert {\nabla {U_{3,\varepsilon }}} \Vert_2^2\left[ {\Vert {\tilde B} \Vert_2^2 + \Vert {{\nabla _H}\tilde B} \Vert_2^2 + (1 + {\varepsilon ^2})\Vert {\nabla \tilde B} \Vert_2^2 + (1 + {\varepsilon ^2})\Vert {\nabla {\nabla _H}\tilde B} \Vert_2^2} \right]\\
                 &+ C{\varepsilon ^2}\Vert {\nabla {U_{3,\varepsilon }}} \Vert_2^2\left[ {(1 + {\varepsilon ^4})\Vert {\nabla \tilde B} \Vert_2^4} \right] + C{\varepsilon ^2}\Vert {\nabla {U_{3,\varepsilon }}} \Vert_2^2\Vert {\nabla {\nabla _H}{{\tilde U}_\varepsilon }} \Vert_2^2\\
                 &+ C{\varepsilon ^2}\Vert {\nabla {U_{3,\varepsilon }}} \Vert_2^2\left[ {\Vert {\tilde B} \Vert_2^4 + \Vert {\tilde B} \Vert_2^2\Vert {{\nabla _H}\tilde B} \Vert_2^2 + (1 + {\varepsilon ^4})\Vert {\nabla \tilde B} \Vert_2^2\Vert {\nabla {\nabla _H}\tilde B} \Vert_2^2} \right]\\
                 &+ C{\varepsilon ^2}\left( {\Vert {\tilde B} \Vert_2^2 + \Vert {{\nabla _H}\tilde B} \Vert_2^2 + \Vert {\nabla \tilde B} \Vert_2^2 + \Vert {\nabla {\nabla _H}\tilde B} \Vert_2^2} \right)\\
                 &+ \frac{1}{{102}}\left( {{\varepsilon ^2}\Vert {\nabla {\nabla _H}{U_{3,\varepsilon }}} \Vert_2^2 + \Vert {\nabla {\nabla _H}{{\tilde U}_\varepsilon }} \Vert_2^2} \right) + C\left[ {(1 + {\varepsilon ^2})\Vert {\nabla \tilde B} \Vert_2^2} \right]\Vert {\nabla {{\tilde U}_\varepsilon }} \Vert_2^2,
         \end{aligned}
\end{equation}

\begin{equation} \label{4.10}
        \begin{aligned}
                   {J_{23}} =& {\varepsilon ^2}\int_\Omega  {({V_\varepsilon } \cdot \nabla {A_3}) \Delta {U_{3,\varepsilon }}d\Omega} \\
                   =& {\varepsilon ^2}\int_\Omega  {\left[ {({\nabla _H} \cdot \tilde A)\int_0^z {({\nabla _H} \cdot {{\tilde V}_\varepsilon })d\xi }  - {{\tilde V}_\varepsilon } \cdot \int_0^z {{\nabla _H}({\nabla _H} \cdot \tilde A)d\xi } } \right] {\Delta _H}{U_{3,\varepsilon }}d\Omega}\\
                   &+ {\varepsilon ^2}\int_\Omega  {\left[ {{\partial _z}{{\tilde V}_\varepsilon } \cdot \int_0^z {{\nabla _H}({\nabla _H} \cdot \tilde A)d\xi }  + ({\nabla _H} \cdot \tilde A){\partial _z}{V_{3,\varepsilon }}} \right] {\partial _z}{U_{3,\varepsilon }}d\Omega} \\
                   &+ {\varepsilon ^2}\int_\Omega  {\left[ {{{\tilde V}_\varepsilon } \cdot {\nabla _H}({\nabla _H} \cdot \tilde A) + ({\nabla _H} \cdot {\partial _z}\tilde A){V_{3,\varepsilon }}} \right] {\partial _z}{U_{3,\varepsilon }}d\Omega} \\
                   \le& C{\varepsilon ^2}\left( {\Vert {{\nabla ^2}{\nabla _H}\tilde A} \Vert_2^2 + \Vert {{\nabla ^2}\tilde A} \Vert_2^2\Vert {{\nabla ^2}{\nabla _H}\tilde A} \Vert_2^2} \right) + C\left( {\Vert {{{\tilde V}_\varepsilon }} \Vert_2^4 + \Vert {{{\tilde V}_\varepsilon }} \Vert_2^2\Vert {{\nabla _H}{{\tilde V}_\varepsilon }} \Vert_2^2} \right)\\
                   &+ C\Vert {\nabla {{\tilde V}_\varepsilon }} \Vert_2^2\left[ { (1 + {\varepsilon ^4})\Vert {\nabla \tilde A} \Vert_2^4 + {\varepsilon ^2}\Vert {{\nabla ^2}{\nabla _H}\tilde A} \Vert_2^2 + {\varepsilon ^4}\Vert {{\nabla ^2}\tilde A} \Vert_2^2\Vert {{\nabla ^2}{\nabla _H}\tilde A} \Vert_2^2} \right]\\
                   &+ C{\varepsilon ^2}\Vert {\nabla {U_{3,\varepsilon }}} \Vert_2^2\left( {\Vert {{{\tilde V}_\varepsilon }} \Vert_2^2 + \Vert {{\nabla _H}{{\tilde V}_\varepsilon }} \Vert_2^2 + \Vert {{{\tilde V}_\varepsilon }} \Vert_2^2\Vert {{\nabla _H}{{\tilde V}_\varepsilon }} \Vert_2^2 + \Vert {{{\tilde V}_\varepsilon }} \Vert_2^4} \right)\\
                   &+ C{\varepsilon ^2}\Vert {\nabla {U_{3,\varepsilon }}} \Vert_2^2\left( {\varepsilon ^2}\Vert {{\nabla ^2}{\nabla _H}\tilde A} \Vert_2^2\right) + C\Vert {\nabla {{\tilde V}_\varepsilon }} \Vert_2^2 {\Vert {{\nabla _H}\tilde A} \Vert_2^2}\\
                   &+ C{\varepsilon ^2}\left( {\Vert {\nabla {V_{3,\varepsilon }}} \Vert_2^2 + \Vert {\nabla {U_{3,\varepsilon }}} \Vert_2^2} \right)\left[ {\Vert {{\nabla _H}\tilde A} \Vert_2^2 + (1 + {\varepsilon ^2})\Vert {\nabla {\nabla _H}\tilde A} \Vert_2^2} \right]\\
                   &+ \frac{1}{{102}}\left( {{\varepsilon ^2}\Vert {\nabla {\nabla _H}{U_{3,\varepsilon }}} \Vert_2^2 + {\varepsilon ^2}\Vert {\nabla {\nabla _H}{V_{3,\varepsilon }}} \Vert_2^2 + \Vert {\nabla {\nabla _H}{{\tilde V}_\varepsilon }} \Vert_2^2} \right),
         \end{aligned}
\end{equation}
and
\begin{equation} \label{4.11}
    \begin{aligned}
                  {J_{24}} =& {\varepsilon ^2}\int_\Omega  {({V_\varepsilon } \cdot \nabla {U_{3,\varepsilon }}) \Delta {U_{3,\varepsilon }}d\Omega} \\
                  =& {\varepsilon ^2}\int_\Omega  {\left[ {{{\tilde V}_\varepsilon } \cdot {\nabla _H}{U_{3,\varepsilon }} - {\partial _z}{U_{3,\varepsilon }}\int_0^z {({\nabla _H} \cdot {{\tilde V}_\varepsilon })d\xi } } \right] {\Delta _H}{U_{3,\varepsilon }}d\Omega} \\
                  &+ {\varepsilon ^2}\int_\Omega  {\left[ {{\partial _z}{{\tilde V}_\varepsilon } \cdot \int_0^z {{\nabla _H}({\nabla _H} \cdot {{\tilde V}_\varepsilon })d\xi }  - 2{{\tilde V}_\varepsilon } \cdot {\nabla _H}{\partial _z}{U_{3,\varepsilon }}} \right]  {\partial _z}{U_{3,\varepsilon }}d\Omega} \\
                  \le& C{\varepsilon ^2}\Vert {\nabla {U_{3,\varepsilon }}} \Vert_2^2\left( {\Vert {{{\tilde V}_\varepsilon }} \Vert_2^2 + \Vert {{\nabla _H}{{\tilde V}_\varepsilon }} \Vert_2^2 + \Vert {{{\tilde V}_\varepsilon }} \Vert_2^4 + \Vert {{{\tilde V}_\varepsilon }} \Vert_2^2\Vert {{\nabla _H}{{\tilde V}_\varepsilon }} \Vert_2^2} \right)\\
    \end{aligned}
\end{equation}
\begin{equation*}
    \begin{aligned}
                 &+ C\left( {\Vert {\nabla {{\tilde V}_\varepsilon }} \Vert_2^2 + {\varepsilon ^2}\Vert {\nabla {U_{3,\varepsilon }}} \Vert_2^2} \right)\left( {\Vert {\nabla {\nabla _H}{{\tilde V}_\varepsilon }} \Vert_2^2 + {\varepsilon ^2}\Vert {\nabla {\nabla _H}{U_{3,\varepsilon }}} \Vert_2^2} \right)\\
                 &+ C\Vert {\nabla {{\tilde V}_\varepsilon }} \Vert_2^2\left( {{\varepsilon ^2}\Vert {{\nabla _H}{U_{3,\varepsilon }}} \Vert_2^2} \right) + \frac{1}{{102}}\left( {{\varepsilon ^2}\Vert {\nabla {\nabla _H}{U_{3,\varepsilon }}} \Vert_2^2 + \Vert {\nabla {\nabla _H}{{\tilde V}_\varepsilon }} \Vert_2^2} \right),
    \end{aligned}
\end{equation*}
where the incompressible conditions have been invoked. With the similar argument of the second integral factor ${J_2}$ on the right-hand side of \eqref{4.3}, each part of the fourth integral factor ${J_4}$ can be bounded as, respectively,
\begin{equation} \label{4.12}
    \begin{aligned}
                   {J_{41}} =& {\varepsilon ^2}\int_\Omega  {(A \cdot \nabla {B_3}) \Delta {V_{3,\varepsilon }}d\Omega} \\
                   \le& C{\varepsilon ^2}\left( {\Vert {\tilde A} \Vert_2^4 + \Vert {\tilde A} \Vert_2^2\Vert {{\nabla _H}\tilde A} \Vert_2^2 + \Vert {\nabla {\nabla _H}\tilde B} \Vert_2^2 + \Vert {{\nabla ^2}{\nabla _H}\tilde B} \Vert_2^2 } \right)\\
                   &+ C{\varepsilon ^2}\left( {\Vert {{\nabla ^2}\tilde B} \Vert_2^2\Vert {{\nabla ^2}{\nabla _H}\tilde B} \Vert_2^2 + \Vert {{\nabla ^2}\tilde A} \Vert_2^2\Vert {{\nabla ^2}{\nabla _H}\tilde A} \Vert_2^2} \right)\\
                   &+ C\left( {{\varepsilon ^4}\Vert {{\nabla ^2}\tilde A} \Vert_2^2\Vert {{\nabla ^2}{\nabla _H}\tilde A} \Vert_2^2 + {\varepsilon ^2}\Vert {{\nabla ^2}{\nabla _H}\tilde A} \Vert_2^2} \right)\Vert {\nabla {{\tilde V}_\varepsilon }} \Vert_2^2\\
                   &+ C\left[ {1 + {\varepsilon ^2} + \Vert {\tilde A} \Vert_2^2 + \Vert {{\nabla _H}\tilde A} \Vert_2^2 + (1 + {\varepsilon ^2})\Vert {\nabla \tilde A} \Vert_2^2 + (1 + {\varepsilon ^2})\Vert {\nabla {\nabla _H}\tilde A} \Vert_2^2} \right]\Vert {\nabla {{\tilde V}_\varepsilon }} \Vert_2^2\\
                   &+ C\left[ {\Vert {\tilde A} \Vert_2^4 + \Vert {\tilde A} \Vert_2^2\Vert {{\nabla _H}\tilde A} \Vert_2^2 + (1 + {\varepsilon ^4})\Vert {\nabla \tilde A} \Vert_2^4 + (1 + {\varepsilon ^4})\Vert {\nabla \tilde A} \Vert_2^2\Vert {\nabla {\nabla _H}\tilde A} \Vert_2^2} \right]\Vert {\nabla {{\tilde V}_\varepsilon }} \Vert_2^2\\
                   &+ \frac{1}{{102}}\left( {{\varepsilon ^2}\Vert {\nabla {\nabla _H}{V_{3,\varepsilon }}} \Vert_2^2 + \Vert {\nabla {\nabla _H}{{\tilde V}_\varepsilon }} \Vert_2^2} \right),
    \end{aligned}
\end{equation}

\begin{equation} \label{4.13}
        \begin{aligned}
                 {J_{42}} =& {\varepsilon ^2}\int_\Omega  {(A \cdot \nabla {V_{3,\varepsilon }}) \Delta {V_{3,\varepsilon }}d\Omega} \\
                 \le& C\left[ {(1 + {\varepsilon ^4})\Vert {\nabla \tilde A} \Vert_2^4 + {\varepsilon ^2}\Vert {{\nabla ^2}{\nabla _H}\tilde A} \Vert_2^2 + {\varepsilon ^4}\Vert {{\nabla ^2}\tilde A} \Vert_2^2\Vert {{\nabla ^2}{\nabla _H}\tilde A} \Vert_2^2} \right]\Vert {\nabla {{\tilde V}_\varepsilon }} \Vert_2^2\\
                 &+ C{\varepsilon ^2}\Vert {\nabla {V_{3,\varepsilon }}} \Vert_2^2\left[ {\Vert {\tilde A} \Vert_2^2 + \Vert {{\nabla _H}\tilde A} \Vert_2^2 + (1 + {\varepsilon ^2})\Vert {\nabla \tilde A} \Vert_2^2 + (1 + {\varepsilon ^2})\Vert {\nabla {\nabla _H}\tilde A} \Vert_2^2} \right]\\
                 &+ C{\varepsilon ^2}\Vert {\nabla {V_{3,\varepsilon }}} \Vert_2^2\left[ {(1 + {\varepsilon ^4})\Vert {\nabla \tilde A} \Vert_2^4} \right] + C{\varepsilon ^2}\Vert {\nabla {V_{3,\varepsilon }}} \Vert_2^2\Vert {\nabla {\nabla _H}{{\tilde V}_\varepsilon }} \Vert_2^2\\
                 &+ C{\varepsilon ^2}\Vert {\nabla {V_{3,\varepsilon }}} \Vert_2^2\left[ {\Vert {\tilde A} \Vert_2^4 + \Vert {\tilde A} \Vert_2^2\Vert {{\nabla _H}\tilde A} \Vert_2^2 + (1 + {\varepsilon ^4})\Vert {\nabla \tilde A} \Vert_2^2\Vert {\nabla {\nabla _H}\tilde A} \Vert_2^2} \right]\\
                 &+ C{\varepsilon ^2}\left( {\Vert {\tilde A} \Vert_2^2 + \Vert {{\nabla _H}\tilde A} \Vert_2^2 + \Vert {\nabla \tilde A} \Vert_2^2 + \Vert {\nabla {\nabla _H}\tilde A} \Vert_2^2} \right)\\
                 &+ \frac{1}{{102}}\left( {{\varepsilon ^2}\Vert {\nabla {\nabla _H}{V_{3,\varepsilon }}} \Vert_2^2 + \Vert {\nabla {\nabla _H}{{\tilde V}_\varepsilon }} \Vert_2^2} \right) + C\left[ {(1 + {\varepsilon ^2})\Vert {\nabla \tilde A} \Vert_2^2} \right]\Vert {\nabla {{\tilde V}_\varepsilon }} \Vert_2^2,
         \end{aligned}
\end{equation}

\begin{equation} \label{4.14}
        \begin{aligned}
                   {J_{43}} =& {\varepsilon ^2}\int_\Omega  {({U_\varepsilon } \cdot \nabla {B_3}) \Delta {V_{3,\varepsilon }}d\Omega} \\
                   \le& C{\varepsilon ^2}\left( {\Vert {{\nabla ^2}{\nabla _H}\tilde B} \Vert_2^2 + \Vert {{\nabla ^2}\tilde B} \Vert_2^2\Vert {{\nabla ^2}{\nabla _H}\tilde B} \Vert_2^2} \right) + C\left( {\Vert {{{\tilde U}_\varepsilon }} \Vert_2^4 + \Vert {{{\tilde U}_\varepsilon }} \Vert_2^2\Vert {{\nabla _H}{{\tilde U}_\varepsilon }} \Vert_2^2} \right)\\
                   &+ C\Vert {\nabla {{\tilde U}_\varepsilon }} \Vert_2^2\left[ { (1 + {\varepsilon ^4})\Vert {\nabla \tilde B} \Vert_2^4 + {\varepsilon ^2}\Vert {{\nabla ^2}{\nabla _H}\tilde B} \Vert_2^2 + {\varepsilon ^4}\Vert {{\nabla ^2}\tilde B} \Vert_2^2\Vert {{\nabla ^2}{\nabla _H}\tilde B} \Vert_2^2} \right]\\
                   &+ C{\varepsilon ^2}\Vert {\nabla {V_{3,\varepsilon }}} \Vert_2^2\left( {\Vert {{{\tilde U}_\varepsilon }} \Vert_2^2 + \Vert {{\nabla _H}{{\tilde U}_\varepsilon }} \Vert_2^2 + \Vert {{{\tilde U}_\varepsilon }} \Vert_2^2\Vert {{\nabla _H}{{\tilde U}_\varepsilon }} \Vert_2^2 + \Vert {{{\tilde U}_\varepsilon }} \Vert_2^4} \right)\\
                   &+ C {{\varepsilon ^2}\Vert {\nabla {V_{3,\varepsilon }}} \Vert_2^2} \left({\varepsilon ^2}\Vert {{\nabla ^2}{\nabla _H}\tilde B} \Vert_2^2\right) + C\Vert {\nabla {{\tilde U}_\varepsilon }} \Vert_2^2 {\Vert {{\nabla _H}\tilde B} \Vert_2^2}\\
                   &+ C{\varepsilon ^2}\left( {\Vert {\nabla {U_{3,\varepsilon }}} \Vert_2^2 + \Vert {\nabla {V_{3,\varepsilon }}} \Vert_2^2} \right)\left[ {\Vert {{\nabla _H}\tilde B} \Vert_2^2 + (1 + {\varepsilon ^2})\Vert {\nabla {\nabla _H}\tilde B} \Vert_2^2} \right]\\
                   &+ \frac{1}{{102}}\left( {{\varepsilon ^2}\Vert {\nabla {\nabla _H}{V_{3,\varepsilon }}} \Vert_2^2 + {\varepsilon ^2}\Vert {\nabla {\nabla _H}{U_{3,\varepsilon }}} \Vert_2^2 + \Vert {\nabla {\nabla _H}{{\tilde U}_\varepsilon }} \Vert_2^2} \right),
         \end{aligned}
\end{equation}
and
\begin{equation} \label{4.15}
    \begin{aligned}
                  {J_{44}} =& {\varepsilon ^2}\int_\Omega  {({U_\varepsilon } \cdot \nabla {V_{3,\varepsilon }}) \Delta {V_{3,\varepsilon }}d\Omega} \\
                  \le& C{\varepsilon ^2}\Vert {\nabla {V_{3,\varepsilon }}} \Vert_2^2\left( {\Vert {{{\tilde U}_\varepsilon }} \Vert_2^2 + \Vert {{\nabla _H}{{\tilde U}_\varepsilon }} \Vert_2^2 + \Vert {{{\tilde U}_\varepsilon }} \Vert_2^4 + \Vert {{{\tilde U}_\varepsilon }} \Vert_2^2\Vert {{\nabla _H}{{\tilde U}_\varepsilon }} \Vert_2^2} \right)\\
                  &+ C\left( {\Vert {\nabla {{\tilde U}_\varepsilon }} \Vert_2^2 + {\varepsilon ^2}\Vert {\nabla {V_{3,\varepsilon }}} \Vert_2^2} \right)\left( {\Vert {\nabla {\nabla _H}{{\tilde U}_\varepsilon }} \Vert_2^2 + {\varepsilon ^2}\Vert {\nabla {\nabla _H}{V_{3,\varepsilon }}} \Vert_2^2} \right)\\
                  &+ C\Vert {\nabla {{\tilde U}_\varepsilon }} \Vert_2^2\left( {{\varepsilon ^2}\Vert {{\nabla _H}{V_{3,\varepsilon }}} \Vert_2^2} \right) + \frac{1}{{102}}\left( {{\varepsilon ^2}\Vert {\nabla {\nabla _H}{V_{3,\varepsilon }}} \Vert_2^2 + \Vert {\nabla {\nabla _H}{{\tilde U}_\varepsilon }} \Vert_2^2} \right).
    \end{aligned}
\end{equation}
With the assistance of the incompressibility conditions, the H\"older inequality and the Young inequality, we can estimate the integral factors ${J_5}$ and ${J_7}$ on the right-hand side of \eqref{4.3} as, respectively,
\begin{equation} \label{4.16}
    \begin{aligned}
                 {J_5} =& \int_\Omega  {[{\varepsilon ^2}({\partial _t}{A_3} - {\Delta _H}{A_3}) \Delta {U_{3,\varepsilon }}]d\Omega} \\
                 =& \int_\Omega  {[{\varepsilon ^2}({\partial _t}{A_3} - {\Delta _H}{A_3}) ({\Delta _H}{U_{3,\varepsilon }} + {\partial _{zz}}{U_{3,\varepsilon }})]d\Omega} \\
                 \le& {\varepsilon ^2}\int_\Omega  {\left( {\int_{ - 1}^1 {(\vert {\nabla _H^3\tilde A} \vert + \vert {{\nabla _H}{\partial _t}\tilde A} \vert)dz} } \right)\vert {{\Delta _H}{U_{3,\varepsilon }}} \vert d\Omega} \\
                 &+ {\varepsilon ^2}\int_\Omega  {(\vert {\nabla _H^3\tilde A} \vert + \vert {{\nabla _H}{\partial _t}\tilde A} \vert)\vert {{\partial _z}{U_{3,\varepsilon }}} \vert d\Omega} \\
                 \le& C{\varepsilon ^2}\int_\Omega  {\left( {\int_{ - 1}^1 {{(\vert {\nabla _H^3\tilde A} \vert + \vert {{\nabla _H}{\partial _t}\tilde A} \vert)^2}dz} } \right)d\Omega}  + \frac{{4{\varepsilon ^2}}}{{51}}\Vert {{\Delta _H}{U_{3,\varepsilon }}} \Vert_2^2\\
                 &+ C{\varepsilon ^2}\left( {\Vert {\nabla _H^3\tilde A} \Vert_2^2 + \Vert {{\nabla _H}{\partial _t}\tilde A} \Vert_2^2} \right) + C{\varepsilon ^2}\Vert {{\partial _z}{U_{3,\varepsilon }}} \Vert_2^2\\
                 \le& C{\varepsilon ^2}\left( {\Vert {{\nabla ^2}{\nabla _H}\tilde A} \Vert_2^2 + \Vert {\nabla {\partial _t}\tilde A} \Vert_2^2 + \Vert {\nabla {U_{3,\varepsilon }}} \Vert_2^2} \right) + \frac{{4{\varepsilon ^2}}}{{51}}\Vert {\nabla {\nabla _H}{U_{3,\varepsilon }}} \Vert_2^2,
    \end{aligned}
\end{equation}
and
\begin{equation} \label{4.17}
    \begin{aligned}
                  {J_7} =& \int_\Omega  {[{\varepsilon ^2}({\partial _t}{B_3} - {\Delta _H}{B_3}) \Delta {V_{3,\varepsilon }}]d\Omega} \\
                  \le& C{\varepsilon ^2}\left( {\Vert {{\nabla ^2}{\nabla _H}\tilde B} \Vert_2^2 + \Vert {\nabla {\partial _t}\tilde B} \Vert_2^2 + \Vert {\nabla {V_{3,\varepsilon }}} \Vert_2^2} \right) + \frac{{4{\varepsilon ^2}}}{{51}}\Vert {\nabla {\nabla _H}{V_{3,\varepsilon }}} \Vert_2^2.
    \end{aligned}
\end{equation}
For the integral factors ${J_6}$ and ${J_8}$, a direct application of H\"older's inequality and Young's inequality yields
\begin{equation} \label{4.18}
    \begin{aligned}
                   {J_6} =& \int_\Omega  {[{\varepsilon ^\alpha }({\nabla _H} \cdot {\partial _z}\tilde A) \Delta {U_{3,\varepsilon }} - {\varepsilon ^{\alpha  - 2}}({\partial _{zz}}\tilde A) \cdot \Delta {{\tilde U}_\varepsilon }]d\Omega} \\
                   =& \int_\Omega  {[{\varepsilon ^\alpha }({\nabla _H} \cdot {\partial _z}\tilde A) {\Delta _H}{U_{3,\varepsilon }} - {\varepsilon ^{\alpha  - 2}}({\partial _{zz}}\tilde A) \cdot {\Delta _H}{{\tilde U}_\varepsilon }]d\Omega} \\
                   &+ \int_\Omega  {[{\varepsilon ^\alpha }({\nabla _H} \cdot {\partial _z}\tilde A) {\partial _{zz}}{U_{3,\varepsilon }} - {\varepsilon ^{\alpha  - 2}}({\partial _{zz}}\tilde A) \cdot {\partial _{zz}}{{\tilde U}_\varepsilon }]d\Omega} \\
                   \le& {\varepsilon ^\alpha }{\Vert {{\nabla _H}{\partial _z}\tilde A} \Vert_2}{\Vert {{\Delta _H}{U_{3,\varepsilon }}} \Vert_2} + {\varepsilon ^{\alpha  - 2}}{\Vert {{\partial _{zz}}\tilde A} \Vert_2}{\Vert {{\Delta _H}{{\tilde U}_\varepsilon }} \Vert_2}\\
                   &+ {\varepsilon ^\alpha }{\Vert {{\nabla _H}{\partial _z}\tilde A} \Vert_2}{\Vert {{\partial _{zz}}{U_{3,\varepsilon }}} \Vert_2} + {\varepsilon ^{\alpha  - 2}}{\Vert {{\partial _{zz}}\tilde A} \Vert_2}{\Vert {{\partial _{zz}}{{\tilde U}_\varepsilon }} \Vert_2}\\
                   \le& C( {{\varepsilon ^{\alpha  - 2}} + {\varepsilon ^{2\alpha  - 4}}} )\Vert {{\nabla ^2}\tilde A} \Vert_2^2 + C( {{\varepsilon ^\alpha } + {\varepsilon ^{2\alpha  - 2}}} )\Vert {\nabla {\nabla _H}\tilde A} \Vert_2^2\\
                   &+ \frac{4}{{51}}\left( {\Vert {\nabla {\nabla _H}{{\tilde U}_\varepsilon }} \Vert_2^2 + {\varepsilon ^2}\Vert {\nabla {\nabla _H}{U_{3,\varepsilon }}} \Vert_2^2 + {\varepsilon ^\alpha }\Vert {\nabla {\partial _z}{U_{3,\varepsilon }}} \Vert_2^2 + {\varepsilon ^{\alpha  - 2}}\Vert {\nabla {\partial _z}{{\tilde U}_\varepsilon }} \Vert_2^2} \right),
    \end{aligned}
\end{equation}
and
\begin{equation} \label{4.19}
    \begin{aligned}
                 {J_8} =& \int_\Omega  {[{\varepsilon ^\alpha }({\nabla _H} \cdot {\partial _z}\tilde B) \Delta {V_{3,\varepsilon }} - {\varepsilon ^{\alpha  - 2}}({\partial _{zz}}\tilde B) \cdot \Delta {{\tilde V}_\varepsilon }]d\Omega} \\
                 \le& C({\varepsilon ^{\alpha  - 2}} + {\varepsilon ^{2\alpha  - 4}})\Vert {{\nabla ^2}\tilde B} \Vert_2^2 + C({\varepsilon ^\alpha } + {\varepsilon ^{2\alpha  - 2}})\Vert {\nabla {\nabla _H}\tilde B} \Vert_2^2\\
                 &+ \frac{4}{{51}}\left( {\Vert {\nabla {\nabla _H}{{\tilde V}_\varepsilon }} \Vert_2^2 + {\varepsilon ^2}\Vert {\nabla {\nabla _H}{V_{3,\varepsilon }}} \Vert_2^2 + {\varepsilon ^\alpha }\Vert {\nabla {\partial _z}{V_{3,\varepsilon }}} \Vert_2^2 + {\varepsilon ^{\alpha  - 2}}\Vert {\nabla {\partial _z}{{\tilde V}_\varepsilon }} \Vert_2^2} \right).
    \end{aligned}
\end{equation}
Therefore, collecting the estimations for all integral factors from \eqref{4.4} to \eqref{4.19}, we can obtain that
\begin{equation*}
       \begin{aligned}
                  \frac{1}{2}\frac{{dF(t)}}{{dt}} + \frac{5}{6}{G_1}(t) + \frac{5}{6}{G_2}(t) \le& {C_1}\left[ {{H_1}(t) + {H_2}(t) + {H_3}(t) + {H_4}(t) + {H_5}(t) + {H_6}(t)} \right]F(t)\\
                  &+ {C_1}\left[ {{H_7}(t) + {H_8}(t) + {H_9}(t) + {H_{10}}(t) + {H_{11}}(t) + {H_{12}}(t)} \right],
       \end{aligned}
\end{equation*}
where we define
\begin{align*}
                   F(t):=& {\Vert {\nabla ({{\tilde U}_\varepsilon },\varepsilon {U_{3,\varepsilon }},{{\tilde V}_\varepsilon },\varepsilon {V_{3,\varepsilon }})} \Vert_{2}^2},\\
                   G_1(t):=& {\Vert {\nabla {\nabla _H}{{\tilde U}_\varepsilon }} \Vert_{2}^2 + {\varepsilon ^{\alpha  - 2}}\Vert {\nabla {\partial _z}{{\tilde U}_\varepsilon }} \Vert_{2}^2 + {\varepsilon ^2}\Vert {\nabla {\nabla _H}{U_{3,\varepsilon }}} \Vert_{2}^2 + {\varepsilon ^\alpha }\Vert {\nabla {\partial _z}{U_{3,\varepsilon }}} \Vert_{2}^2},\\
                   G_2(t):=& {\Vert {\nabla {\nabla _H}{{\tilde V}_\varepsilon }} \Vert_{2}^2 + {\varepsilon ^{\alpha  - 2}}\Vert {\nabla {\partial _z}{{\tilde V}_\varepsilon }} \Vert_{2}^2 + {\varepsilon ^2}\Vert {\nabla {\nabla _H}{V_{3,\varepsilon }}} \Vert_{2}^2 + {\varepsilon ^\alpha }\Vert {\nabla {\partial _z}{V_{3,\varepsilon }}} \Vert_{2}^2},\\
                   H_1(t):=& {(1+{\varepsilon ^2}) + \Vert {\tilde A} \Vert_2^2 + \Vert {{\nabla _H}\tilde A} \Vert_2^2 + (1 + {\varepsilon ^2})\Vert {\nabla \tilde A} \Vert_2^2} + (1 + {\varepsilon ^2})\Vert {\nabla {\nabla _H}\tilde A} \Vert_2^2\\
                   &+ \Vert {\tilde B} \Vert_2^2 + \Vert {{\nabla _H}\tilde B} \Vert_2^2 + (1 + {\varepsilon ^2})\Vert {\nabla \tilde B} \Vert_2^2 + (1 + {\varepsilon ^2})\Vert {\nabla {\nabla _H}\tilde B} \Vert_2^2,\\
                   H_2(t):=& {{\varepsilon ^2}\Vert {{\nabla ^2}{\nabla _H}\tilde A}\Vert_2^2 + {\varepsilon ^4}\Vert {{\nabla ^2}\tilde A} \Vert_2^2\Vert {{\nabla ^2}{\nabla _H}\tilde A} \Vert_2^2 + (1 + {\varepsilon ^4})\Vert {\nabla \tilde A} \Vert_2^4}\\
                   &+ {\varepsilon ^2}\Vert {{\nabla ^2}{\nabla _H}\tilde B} \Vert_2^2 + {\varepsilon ^4}\Vert {{\nabla ^2}\tilde B} \Vert_2^2\Vert {{\nabla ^2}{\nabla _H}\tilde B} \Vert_2^2 + (1 + {\varepsilon ^4})\Vert {\nabla \tilde B} \Vert_2^4,\\
                   H_3(t):=& {\Vert {\tilde A} \Vert_2^4 + \Vert {{\nabla ^2}\tilde A} \Vert_2^2 + \Vert {\tilde A} \Vert_2^2\Vert {{\nabla _H}\tilde A} \Vert_2^2} + \Vert {\tilde B} \Vert_2^4 + \Vert {{\nabla ^2}\tilde B} \Vert_2^2 + \Vert {\tilde B} \Vert_2^2\Vert {{\nabla _H}\tilde B} \Vert_2^2,\\
                   H_4(t):=& (1 + {\varepsilon ^4})\Vert {\nabla \tilde A} \Vert_2^2\Vert {\nabla {\nabla _H}\tilde A} \Vert_2^2 + (1 + {\varepsilon ^4})\Vert {\nabla \tilde B} \Vert_2^2\Vert {\nabla {\nabla _H}\tilde B} \Vert_2^2,\\
                   H_5(t):=& {\Vert {{{\tilde U}_\varepsilon }} \Vert_2^2 + \Vert {{\nabla _H}{{\tilde U}_\varepsilon }} \Vert_2^2 + {\varepsilon ^2}\Vert {{\nabla _H}{U_{3,\varepsilon }}} \Vert_2^2 + \Vert {{{\tilde U}_\varepsilon }} \Vert_2^4 + \Vert {{{\tilde U}_\varepsilon }} \Vert_2^2\Vert {{\nabla _H}{{\tilde U}_\varepsilon }} \Vert_2^2}\\
                   &+ \Vert {{{\tilde V}_\varepsilon }} \Vert_2^2 + \Vert {{\nabla _H}{{\tilde V}_\varepsilon }} \Vert_2^2 + {\varepsilon ^2}\Vert {{\nabla _H}{V_{3,\varepsilon }}} \Vert_2^2 + \Vert {{{\tilde V}_\varepsilon }} \Vert_2^4 + \Vert {{{\tilde V}_\varepsilon }} \Vert_2^2\Vert {{\nabla _H}{{\tilde V}_\varepsilon }} \Vert_2^2,\\
                   H_6(t):=& {\Vert {\nabla {\nabla _H}{{\tilde U}_\varepsilon }} \Vert_2^2 + {\varepsilon ^2}\Vert {\nabla {\nabla _H}{U_{3,\varepsilon }}} \Vert_2^2 + \Vert {\nabla {\nabla _H}{{\tilde V}_\varepsilon }} \Vert_2^2 + {\varepsilon ^2}\Vert {\nabla {\nabla _H}{V_{3,\varepsilon }}} \Vert_2^2},\\
                   H_7(t):=& {\varepsilon ^2}\left( {\Vert {\tilde A} \Vert_2^2 + \Vert {{\nabla _H}\tilde A} \Vert_2^2 + \Vert {\nabla \tilde A} \Vert_2^2 + \Vert {\nabla {\nabla _H}\tilde A} \Vert_2^2 + \Vert {{\nabla ^2}{\nabla _H}\tilde A} \Vert_2^2} \right.\\
                   &\left. { + \Vert {\tilde B} \Vert_2^2 + \Vert {{\nabla _H}\tilde B} \Vert_2^2 + \Vert {\nabla \tilde B} \Vert_2^2 + \Vert {\nabla {\nabla _H}\tilde B} \Vert_2^2 + \Vert {{\nabla ^2}{\nabla _H}\tilde B} \Vert_2^2} \right),\\
                   H_8(t):=& {\varepsilon ^2}\left( {\Vert {\tilde A} \Vert_2^4 + \Vert {\tilde A} \Vert_2^2\Vert {{\nabla _H}\tilde A} \Vert_2^2 + \Vert {{\nabla ^2}\tilde A} \Vert_2^2\Vert {{\nabla ^2}{\nabla _H}\tilde A} \Vert_2^2 + \Vert {\nabla {\partial _t}\tilde A} \Vert_2^2} \right.\\
                   &\left. { + \Vert {\tilde B} \Vert_2^4 + \Vert {\tilde B} \Vert_2^2\Vert {{\nabla _H}\tilde B} \Vert_2^2 + \Vert {{\nabla ^2}\tilde B} \Vert_2^2\Vert {{\nabla ^2}{\nabla _H}\tilde B} \Vert_2^2 + \Vert {\nabla {\partial _t}\tilde B} \Vert_2^2} \right),\\
                   H_9(t):=& {\Vert {{{\tilde U}_\varepsilon }} \Vert_2^4 + \Vert {{{\tilde U}_\varepsilon }} \Vert_2^2\Vert {{\nabla _H}{{\tilde U}_\varepsilon }} \Vert_2^2 + \Vert {{{\tilde V}_\varepsilon }} \Vert_2^4 + \Vert {{{\tilde V}_\varepsilon }} \Vert_2^2\Vert {{\nabla _H}{{\tilde V}_\varepsilon }} \Vert_2^2},\\
                   H_{10}(t):=& ({\varepsilon ^{\alpha  - 2}} + {\varepsilon ^{2\alpha  - 4}})\left( {\Vert {{\nabla ^2}\tilde A} \Vert_2^2 + \Vert {{\nabla ^2}\tilde B} \Vert_2^2} \right),\\
                   H_{11}(t):=& ({\varepsilon ^\alpha } + {\varepsilon ^{2\alpha  - 2}})\left( {\Vert {\nabla {\nabla _H}\tilde A} \Vert_2^2 + \Vert {\nabla {\nabla _H}\tilde B} \Vert_2^2} \right),\\
                   H_{12}(t):=& {\Vert {{{\tilde U}_\varepsilon }} \Vert_2^2 + \Vert {{{\tilde U}_\varepsilon }} \Vert_2^2\Vert {\nabla {\nabla _H}\tilde B} \Vert_2^2 + \Vert {{{\tilde V}_\varepsilon }} \Vert_2^2 + \Vert {{{\tilde V}_\varepsilon }} \Vert_2^2\Vert {\nabla {\nabla _H}\tilde A} \Vert_2^2}.
\end{align*}
By virtue of the assumption stated in Proposition \ref{prop4.3}
\begin{equation*}
                \mathop {\sup}\limits_{0 \le s \le t} \left( {\Vert {\nabla ( {{{\tilde U}_\varepsilon },{{\tilde V}_\varepsilon }} )} \Vert_2^2 + {\varepsilon ^2}\Vert {\nabla {U_{3,\varepsilon }}} \Vert_2^2 + {\varepsilon ^2}\Vert {\nabla {V_{3,\varepsilon }}} \Vert_2^2} \right)(s) \le \delta_0^2,
\end{equation*}
we can choose a small positive constant ${\delta_0} = \sqrt {\frac{1}{{3{C_1}}}}$ such
that
\begin{equation*}
    \begin{aligned}
                \frac{{dF(t)}}{{dt}} + {G_1}(t) + {G_2}(t) \le& {C_1}\left[ {{H_1}(t) + {H_2}(t) + {H_3}(t) + {H_4}(t) + {H_5}(t)} \right]F(t)\\
                &+ {C_1}\left[ {{H_7}(t) + {H_8}(t) + {H_9}(t) + {H_{10}}(t) + {H_{11}}(t) + {H_{12}}(t)} \right],
       \end{aligned}
\end{equation*}
which together with the Grönwall inequality, Proposition \ref{prop4.1} and Proposition \ref{prop4.2} yields that
\begin{equation*}
    \begin{aligned}
                  &F(t) + \int_0^t {{G_1}(s)ds}  + \int_0^t {{G_2}(s)ds} \\
                 \le& \exp \left( {{C_2}\int_0^t {{H_1}(s)ds}  + {C_2}\int_0^t {{H_2}(s)ds}  + {C_2}\int_0^t {{H_3}(s)ds}  + {C_2}\int_0^t {{H_4}(s)ds}}\right.\\
                 &\left.{+ {C_2}\int_0^t {{H_5}(s)ds}} \right) \times \left( {{C_2}\int_0^t {{H_7}(s)ds}  + {C_2}\int_0^t {{H_8}(s)ds} + {C_2}\int_0^t {{H_9}(s)ds}} \right.\\
                 &\left. {   + {C_2}\int_0^t {{H_{10}}(s)ds} + {C_2}\int_0^t {{H_{11}}(s)ds}  + {C_2}\int_0^t {{H_{12}}(s)ds} } \right)\\
                 \le& {C_2}{\varepsilon ^\gamma }(t + 1)\exp \left\{ {{C_2}(t + 1)\left[ {{N_4}(t) + N_4^2(t) + {N_5}(t) + N_5^2(t) + 1} \right]} \right\}\\
                 &\times \left[ {{N_4}(t) + N_4^2(t) + {N_5}(t) + N_5^2(t) + {N_4}(t){N_5}(t)} \right],
    \end{aligned}
\end{equation*}
where $\gamma = \min \{ 2,\alpha - 2\}$ with $\alpha \in (2,\infty )$ and we have invoked the fact that ${({{\tilde U}_\varepsilon },{U_{3,\varepsilon }},{{\tilde V}_\varepsilon },{V_{3,\varepsilon }})} \vert_{t = 0} = 0$. The proof Proposition \ref{prop4.3} is completed.
\end{proof}

In the following proposition, we can find a small positive number ${\varepsilon (\mathcal{T})}$ depending only on $\mathcal{T}$ to eliminate the effect of the smallness condition in Proposition \ref{prop4.3}, and thus we obtain the $H^1$-estimate on the solution $({{\tilde U}_\varepsilon },{U_{3,\varepsilon }},{{\tilde V}_\varepsilon },{V_{3,\varepsilon }})$ of system \eqref{4.2}.

\begin{proposition}\label{prop4.4}
Let ${\mathcal{T}_\varepsilon ^ *}$ denote the maximal time of the existence of the strong solution $({{\tilde A}_\varepsilon },{A_{3,\varepsilon }},{{\tilde B}_\varepsilon },{B_{3,\varepsilon }})$ to system \eqref{2.3} supplemented with \eqref{2.4}-\eqref{2.6}. Then, for any finite time $\mathcal{T} > 0$, there exists a small constant $\varepsilon(\mathcal{T}) = {\left( {\frac{{5\delta_0^2}}{{8{N_6}(\mathcal{T})}}} \right)^{1/\gamma }} > 0$ such that $\mathcal{T}_\varepsilon ^ * > \mathcal{T}$, provided that $\varepsilon \in (0,\varepsilon (\mathcal{T}))$.
Moreover, the following energy estimate holds for system \eqref{4.2}, that is,
\begin{equation*}
    \begin{aligned}
                    \mathop {\sup }\limits_{0 \le t \le \mathcal{T}}& \left( {\Vert {({{\tilde U}_\varepsilon },\varepsilon {U_{3,\varepsilon }},{{\tilde V}_\varepsilon },\varepsilon {V_{3,\varepsilon }})} \Vert_{{H^1}(\Omega )}^2} \right)(t)\\
                    &+ \int_0^{\mathcal{T}} {\left( {\Vert {{\nabla _H}{{\tilde U}_\varepsilon }} \Vert_{{H^1}(\Omega )}^2 + {\varepsilon ^{\alpha  - 2}}\Vert {{\partial _z}{U_\varepsilon }} \Vert_{{H^1}(\Omega )}^2 + \Vert {{\nabla _H}{{\tilde V}_\varepsilon }} \Vert_{{H^1}(\Omega )}^2 + {\varepsilon ^{\alpha - 2}}\Vert {{\partial _z}{V_\varepsilon }} \Vert_{{H^1}(\Omega )}^2} \right)dt} \\
                    &+ \int_0^{\mathcal{T}} {\left( {{\varepsilon ^2}\Vert {{\nabla _H}{U_{3,\varepsilon }}} \Vert_{{H^1}(\Omega )}^2 + {\varepsilon ^\alpha }\Vert {{\partial _z}{U_{3,\varepsilon }}} \Vert_{{H^1}(\Omega )}^2 + {\varepsilon ^2}\Vert {{\nabla _H}{V_{3,\varepsilon }}} \Vert_{{H^1}(\Omega )}^2 + {\varepsilon ^\alpha }\Vert {{\partial _z}{V_{3,\varepsilon }}} \Vert_{{H^1}(\Omega )}^2} \right)dt}\\
                   \le {\varepsilon ^\gamma }&\left[ {{N_5}({\mathcal{T}}) + {N_6}({\mathcal{T}})} \right],
    \end{aligned}
\end{equation*}
where ${N_5}(t)$ and ${N_6}(t)$, the non-negative continuous increasing functions defined on $[0,\mathcal{T}]$, are both independent of $\varepsilon$, and $\gamma = \min \{ 2,\alpha - 2\}$ with $\alpha \in (2,\infty )$.
\end{proposition}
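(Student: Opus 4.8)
The plan is to run a continuation (bootstrap) argument that converts the conditional estimate of Proposition \ref{prop4.3} into an unconditional one on a prescribed time interval. The crucial observation is that the smallness quantity appearing in the hypothesis of Proposition \ref{prop4.3} is exactly
\[
F(t) := \Vert {\nabla ({{\tilde U}_\varepsilon },\varepsilon {U_{3,\varepsilon }},{{\tilde V}_\varepsilon },\varepsilon {V_{3,\varepsilon }})} \Vert_2^2,
\]
since $\Vert {\nabla ({{\tilde U}_\varepsilon },{{\tilde V}_\varepsilon })} \Vert_2^2 + {\varepsilon ^2}\Vert {\nabla {U_{3,\varepsilon }}} \Vert_2^2 + {\varepsilon ^2}\Vert {\nabla {V_{3,\varepsilon }}} \Vert_2^2$ is precisely $F(t)$. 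Because $({{\tilde U}_\varepsilon },{U_{3,\varepsilon }},{{\tilde V}_\varepsilon },{V_{3,\varepsilon }})\vert_{t=0}=0$, we have $F(0)=0$, and $F$ is continuous in $t$ by the regularity of the two strong solutions whose difference defines $({{\tilde U}_\varepsilon },{U_{3,\varepsilon }},{{\tilde V}_\varepsilon },{V_{3,\varepsilon }})$.

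First I would set $T^{**}=\sup\{t\in[0,\min\{\mathcal{T},\mathcal{T}_\varepsilon^*\}):\ \sup_{0\le s\le t}F(s)\le\delta_0^2\}$, which is positive by $F(0)=0$ and continuity. On $[0,T^{**})$ the hypothesis of Proposition \ref{prop4.3} is satisfied, so that proposition yields $F(t)\le\varepsilon^\gamma N_6(t)\le\varepsilon^\gamma N_6(\mathcal{T})$ for all $t\le\min\{\mathcal{T},T^{**}\}$, using that $N_6$ is increasing. With the prescribed choice $\varepsilon(\mathcal{T})=(5\delta_0^2/(8N_6(\mathcal{T})))^{1/\gamma}$ and $\varepsilon\in(0,\varepsilon(\mathcal{T}))$ one gets $\varepsilon^\gamma N_6(\mathcal{T})<\tfrac{5}{8}\delta_0^2$, hence $\sup_{0\le s\le t}F(s)\le\tfrac{5}{8}\delta_0^2<\delta_0^2$ strictly. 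By continuity of $F$, $T^{**}$ cannot be an interior point of $[0,\min\{\mathcal{T},\mathcal{T}_\varepsilon^*\})$, for otherwise $F$ would be forced to reach the level $\delta_0^2$ there; therefore $T^{**}\ge\min\{\mathcal{T},\mathcal{T}_\varepsilon^*\}$ and the smallness persists up to $\min\{\mathcal{T},\mathcal{T}_\varepsilon^*\}$.

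Next I would rule out $\mathcal{T}_\varepsilon^*\le\mathcal{T}$. On $[0,\mathcal{T}_\varepsilon^*)$ we now have the uniform bound $F(t)\le\tfrac{5}{8}\delta_0^2$, which together with the basic $L^2$-level estimate of Proposition \ref{prop4.2} produces a uniform $H^1$-bound on the difference $({{\tilde U}_\varepsilon },\varepsilon {U_{3,\varepsilon }},{{\tilde V}_\varepsilon },\varepsilon {V_{3,\varepsilon }})$. Adding the $H^2$-bound on $(\tilde A,\tilde B)$ furnished by Proposition \ref{prop4.1} (finite on $[0,\mathcal{T}]$), we obtain a uniform $H^1$-bound on $({{\tilde A}_\varepsilon },{A_{3,\varepsilon }},{{\tilde B}_\varepsilon },{B_{3,\varepsilon }})$ that does not degenerate as $t\uparrow\mathcal{T}_\varepsilon^*$. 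This contradicts the blow-up alternative at the maximal existence time $\mathcal{T}_\varepsilon^*$ of the local strong solution to system \eqref{2.3}; hence $\mathcal{T}_\varepsilon^*>\mathcal{T}$ and the smallness condition of Proposition \ref{prop4.3} holds on all of $[0,\mathcal{T}]$.

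Finally, with the hypothesis of Proposition \ref{prop4.3} verified throughout $[0,\mathcal{T}]$, I would assemble the estimate by adding the two a priori bounds: the $L^2$-part of the $H^1$-norm and its dissipation integral are controlled by $\varepsilon^\gamma N_5(\mathcal{T})$ via Proposition \ref{prop4.2}, while the gradient part and its dissipation integral are controlled by $\varepsilon^\gamma N_6(\mathcal{T})$ via Proposition \ref{prop4.3}; summing gives the asserted bound $\varepsilon^\gamma[N_5(\mathcal{T})+N_6(\mathcal{T})]$. The main obstacle is the closing of the bootstrap: one must guarantee that the conditional estimate returns a value \emph{strictly} below the threshold $\delta_0^2$, which is exactly engineered by the factor $5/8$ and the definition of $\varepsilon(\mathcal{T})$, and that the resulting uniform bound genuinely forbids blow-up at $\mathcal{T}_\varepsilon^*$ through an appropriate $H^1$ continuation criterion for \eqref{2.3}.
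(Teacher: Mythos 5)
Your proposal is correct and follows essentially the same route as the paper: a continuation argument on the threshold set $\{\sup_{0\le s\le t}F(s)\le\delta_0^2\}$, closed by the strict bound $\varepsilon^\gamma N_6(\mathcal{T})\le\tfrac58\delta_0^2<\delta_0^2$, followed by a contradiction with the blow-up alternative to get $\mathcal{T}_\varepsilon^*>\mathcal{T}$, and finally summing the $L^2$-level bound of Proposition \ref{prop4.2} with the gradient-level bound of Proposition \ref{prop4.3}. Your explicit remark that the blow-up criterion for \eqref{2.3} is reached by combining the uniform bound on the difference with the $H^2$-bound on $(\tilde A,\tilde B)$ from Proposition \ref{prop4.1} is, if anything, slightly more careful than the paper's wording at that step.
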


\begin{proof}
For any finite time $\mathcal{T} > 0$, choosing ${\mathcal{T}_\varepsilon^\sigma } = \min \{ \mathcal{T}_\varepsilon ^ * ,\mathcal{T}\}$ and applying Proposition \ref{prop4.2}, we obtain
\begin{equation} \label{4.20}
    \begin{aligned}
                    \mathop {\sup }\limits_{0 \le t < {\mathcal{T}_\varepsilon^\sigma }} &\left( {\Vert {({{\tilde U}_\varepsilon },\varepsilon {U_{3,\varepsilon }},{{\tilde V}_\varepsilon },\varepsilon {V_{3,\varepsilon }})} \Vert_{2}^2} \right)(t)\\
                    &+ \int_0^{\mathcal{T}_\varepsilon^\sigma } {\left( {\Vert {{\nabla _H}{{\tilde U}_\varepsilon }} \Vert_{2}^2 + {\varepsilon ^{\alpha  - 2}}\Vert {{\partial _z}{U_\varepsilon }} \Vert_{2}^2 + \Vert {{\nabla _H}{{\tilde V}_\varepsilon }} \Vert_{2}^2 + {\varepsilon ^{\alpha  - 2}}\Vert {{\partial _z}{V_\varepsilon }} \Vert_{2}^2} \right)dt} \\
                    &+ \int_0^{\mathcal{T}_\varepsilon^\sigma } {\left( {{\varepsilon ^2}\Vert {{\nabla _H}{U_{3,\varepsilon }}} \Vert_{2}^2 + {\varepsilon ^\alpha }\Vert {{\partial _z}{U_{3,\varepsilon }}} \Vert_{2}^2 + {\varepsilon ^2}\Vert {{\nabla _H}{V_{3,\varepsilon }}} \Vert_{2}^2 + {\varepsilon ^\alpha }\Vert {{\partial _z}{V_{3,\varepsilon }}} \Vert_{2}^2} \right)dt}\\
                   \le {\varepsilon ^\gamma }{N_5}&(\mathcal{T}),
    \end{aligned}
\end{equation}
where
\begin{equation*}
    \begin{aligned}
                {N_5}(\mathcal{T}) =& C({\cal T} + 1)\exp \left\{ {C(\mathcal{T} + 1)\left[ {{N_4}(\mathcal{T}) + N_4^2(\mathcal{T})} \right]} \right\}\\
                &\times \left[ {{N_4}({\mathcal{T}}) + N_4^2(\mathcal{T}) + {\left( {\Vert {{{\tilde A}_0}} \Vert_2^2 + \Vert {{A_{3,0}}} \Vert_2^2 + \Vert {{{\tilde B}_0}} \Vert_2^2 + \Vert {{B_{3,0}}} \Vert_2^2} \right)^2}} \right].
    \end{aligned}
\end{equation*}
It should be noted that $\gamma  = \min \{ 2,\alpha  - 2\}$ with $\alpha \in (2,\infty )$ and some positive constant $C$ is independent of $\varepsilon$.

On the other hand, let ${\delta_0}$ be the small constant stated in Proposition \ref{prop4.3} and define
\begin{equation*}
                   t_\varepsilon ^\sigma : = \sup \left\{ {t \in (0,T_\varepsilon ^\sigma )\left| {\mathop {\sup }\limits_{0 \le s \le t} \left( {\Vert {\nabla ({{\tilde U}_\varepsilon },\varepsilon {U_{3,\varepsilon }},{{\tilde V}_\varepsilon },\varepsilon {V_{3,\varepsilon }})} \Vert_2^2} \right)(s) \le \delta _0^2} \right.} \right\}.
\end{equation*}
By Proposition \ref{prop4.3}, we have the following energy estimate
\begin{equation} \label{4.21}
    \begin{aligned}
                \mathop {\sup }\limits_{0 \le s \le t} &\left( {\Vert {\nabla ({{\tilde U}_\varepsilon },\varepsilon {U_{3,\varepsilon }},{{\tilde V}_\varepsilon },\varepsilon {V_{3,\varepsilon }})} \Vert_2^2} \right)(s)\\
                &+ \int_0^t {\left( {\Vert {\nabla {\nabla _H}{{\tilde U}_\varepsilon }} \Vert_2^2 + {\varepsilon ^{\alpha  - 2}}\Vert {\nabla {\partial _z}{{\tilde U}_\varepsilon }} \Vert_2^2 + {\varepsilon ^2}\Vert {\nabla {\nabla _H}{U_{3,\varepsilon }}} \Vert_2^2} \right)ds} \\
                &+ \int_0^t {\left( {\Vert {\nabla {\nabla _H}{{\tilde V}_\varepsilon }} \Vert_2^2 + {\varepsilon ^{\alpha  - 2}}\Vert {\nabla {\partial _z}{{\tilde V}_\varepsilon }} \Vert_2^2 + {\varepsilon ^2}\Vert {\nabla {\nabla _H}{V_{3,\varepsilon }}} \Vert_2^2} \right)ds} \\
                &+ \int_0^{t } {\left( {{\varepsilon ^\alpha }\Vert {\nabla {\partial _z}{U_{3,\varepsilon }}} \Vert_2^2 + {\varepsilon ^\alpha }\Vert {\nabla {\partial _z}{V_{3,\varepsilon }}} \Vert_2^2} \right)ds}  \le {\varepsilon ^\gamma }{N_6}(\mathcal{T}),
    \end{aligned}
\end{equation}
for any $t \in [0,t_\varepsilon ^\sigma )$, where
\begin{equation*}
    \begin{aligned}
               {N_6}(\mathcal{T}) =& C(\mathcal{T} + 1)\exp \left\{ {C(\mathcal{T} + 1)\left[ {{N_4}(\mathcal{T}) + N_4^2(\mathcal{T}) + {N_5}(\mathcal{T}) + N_5^2(\mathcal{T}) + 1} \right]} \right\}\\
               &\times \left[ {{N_4}(\mathcal{T}) + N_4^2(\mathcal{T}) + {N_5}(\mathcal{T}) + N_5^2(\mathcal{T}) + {N_4}(\mathcal{T}){N_5}(\mathcal{T})} \right].
    \end{aligned}
\end{equation*}
On account of \eqref{4.21}, we can find a small positive constant $\varepsilon(\mathcal{T}) = {\left( {\frac{{5\delta_0^2}}{{8{N_6}(\mathcal{T})}}} \right)^{1/\gamma }}$ to obtain that
\begin{equation*}
    \begin{aligned}
                \mathop {\sup }\limits_{0 \le s \le t}& \left( {\Vert {\nabla ({{\tilde U}_\varepsilon },\varepsilon {U_{3,\varepsilon }},{{\tilde V}_\varepsilon },\varepsilon {V_{3,\varepsilon }})} \Vert_2^2} \right)(s)\\
                &+ \int_0^t {\left( {\Vert {\nabla {\nabla _H}{{\tilde U}_\varepsilon }} \Vert_2^2 + {\varepsilon ^{\alpha  - 2}}\Vert {\nabla {\partial _z}{{\tilde U}_\varepsilon }} \Vert_2^2 + {\varepsilon ^2}\Vert {\nabla {\nabla _H}{U_{3,\varepsilon }}} \Vert_2^2} \right)ds} \\
                &+ \int_0^t {\left( {\Vert {\nabla {\nabla _H}{{\tilde V}_\varepsilon }} \Vert_2^2 + {\varepsilon ^{\alpha  - 2}}\Vert {\nabla {\partial _z}{{\tilde V}_\varepsilon }} \Vert_2^2 + {\varepsilon ^2}\Vert {\nabla {\nabla _H}{V_{3,\varepsilon }}} \Vert_2^2} \right)ds} \\
                &+ \int_0^{t } {\left( {{\varepsilon ^\alpha }\Vert {\nabla {\partial _z}{U_{3,\varepsilon }}} \Vert_2^2 + {\varepsilon ^\alpha }\Vert {\nabla {\partial _z}{V_{3,\varepsilon }}} \Vert_2^2} \right)ds}  \le \frac{{5\delta _0^2}}{8},
    \end{aligned}
\end{equation*}
for any $t \in [0,t_\varepsilon ^\sigma )$, provided that $\varepsilon \in (0,\varepsilon (\mathcal{T}))$, and thus the above estimate implies
\begin{equation} \label{4.22}
               \mathop {\sup}\limits_{0 \le t < t_\varepsilon ^\sigma } \left( {\Vert {\nabla ({{\tilde U}_\varepsilon },\varepsilon {U_{3,\varepsilon }},{{\tilde V}_\varepsilon },\varepsilon {V_{3,\varepsilon }})} \Vert_2^2} \right)(t) \le \frac{{5\delta _0^2}}{8} < \delta _0^2.
\end{equation}
Owing to the definition of $t_\varepsilon^\sigma$ and \eqref{4.22}, we infer that $t_\varepsilon ^\sigma = \mathcal{T}_\varepsilon^\sigma $. Therefore, we conclude that the estimation \eqref{4.21} holds for any $t \in [0,t_\varepsilon ^\sigma )$ as long as $\varepsilon \in (0,\varepsilon (\mathcal{T}))$.

Now we claim that ${\mathcal{T}_\varepsilon ^ *} > \mathcal{T}$ for any $\varepsilon \in (0,\varepsilon (\mathcal{T}))$. Recalling Proposition \ref{prop4.1}, if ${\mathcal{T}_\varepsilon ^ *} \le \mathcal{T}$, then it is clear that
\begin{equation*}
               \mathop {\limsup }\limits_{t \to {({\mathcal{T}_\varepsilon ^ *})^ - }} \left( {\Vert {\nabla ({{\tilde U}_\varepsilon },\varepsilon {U_{3,\varepsilon }},{{\tilde V}_\varepsilon },\varepsilon {V_{3,\varepsilon }})} \Vert_2} \right) = \infty,
\end{equation*}
which means that the local-in-time strong solution $(\tilde A,{A_3},\tilde B,{B_3})$ to system \eqref{2.10} can be extended beyond the maximal existence time ${\mathcal{T}_\varepsilon ^ *}$, and hence this contradicts to \eqref{4.21}. This contradiction can lead to ${\mathcal{T}_\varepsilon ^ *} > \mathcal{T}$, and therefore $\mathcal{T}_\varepsilon^\sigma = \mathcal{T}$. Furthermore, \eqref{4.20} and \eqref{4.21} ensure that the energy estimate in Proposition \ref{prop4.4} holds for any $\varepsilon \in (0,\varepsilon (\mathcal{T}))$. The proof of Proposition \ref{prop4.4} is completed.
\end{proof}

\begin{proof2.7} For any finite time $\mathcal{T} > 0$, on account of Proposition \ref{prop4.4}, there exists a small constant $\varepsilon (\mathcal{T}) = {\left( {\frac{{5\delta _0^2}}{{8{N_6}(T)}}} \right)^{1/\gamma }} > 0$ such that ${\mathcal{T}_\varepsilon ^ *} > \mathcal{T}$, provided that $\varepsilon \in (0,\varepsilon (\mathcal{T}))$, which implies that there exists a unique local-in-time strong solution $({{\tilde A}_\varepsilon },{A_{3,\varepsilon }},{{\tilde B}_\varepsilon },{B_{3,\varepsilon }})$ on $[0,\mathcal{T}]$ to system \eqref{2.3} supplemented with \eqref{2.4}-\eqref{2.6}, for any $\varepsilon \in (0,\varepsilon (\mathcal{T}))$. Let ${N_7}(\mathcal{T}) = {N_5}(\mathcal{T}) + {N_6}(\mathcal{T})$ be a non-negative continuous increasing function defined on $[0,\mathcal{T}]$ independent of $\varepsilon$. Then the following estimate holds for system \eqref{4.2}, that is,
\begin{equation*}
    \begin{aligned}
                    \mathop {\sup }\limits_{0 \le t \le \mathcal{T}}& \left( {\Vert {({{\tilde U}_\varepsilon },\varepsilon {U_{3,\varepsilon }},{{\tilde V}_\varepsilon },\varepsilon {V_{3,\varepsilon }})} \Vert_{{H^1}(\Omega )}^2} \right)(t)\\
                    &+ \int_0^{\mathcal{T}} {\left( {\Vert {{\nabla _H}{{\tilde U}_\varepsilon }} \Vert_{{H^1}(\Omega )}^2 + {\varepsilon ^{\alpha  - 2}}\Vert {{\partial _z}{U_\varepsilon }} \Vert_{{H^1}(\Omega )}^2 + \Vert {{\nabla _H}{{\tilde V}_\varepsilon }} \Vert_{{H^1}(\Omega )}^2 + {\varepsilon ^{\alpha  - 2}}\Vert {{\partial _z}{V_\varepsilon }} \Vert_{{H^1}(\Omega )}^2} \right)dt} \\
                    &+ \int_0^{\mathcal{T}} {\left( {{\varepsilon ^2}\Vert {{\nabla _H}{U_{3,\varepsilon }}} \Vert_{{H^1}(\Omega )}^2 + {\varepsilon ^\alpha }\Vert {{\partial _z}{U_{3,\varepsilon }}} \Vert_{{H^1}(\Omega )}^2 + {\varepsilon ^2}\Vert {{\nabla _H}{V_{3,\varepsilon }}} \Vert_{{H^1}(\Omega )}^2 + {\varepsilon ^\alpha }\Vert {{\partial _z}{V_{3,\varepsilon }}} \Vert_{{H^1}(\Omega )}^2} \right)dt}\\
                   \le {\varepsilon ^\gamma }& {{N_7}({\mathcal{T}})},
    \end{aligned}
\end{equation*}
where $\gamma = \min \{ 2,\alpha - 2\}$ with $\alpha \in (2,\infty )$. As a consequence, it is obvious that strong convergence results stated in Theorem \ref{thm2.7} are the direct conclusion of the above estimation. \end{proof2.7}

\subsection*{Acknowledgments}

This work was supported by the Natural Science Foundation of China (No. 11771216), the Key Research and Development Program of Jiangsu Province (Social Development) (No. BE2019725), the Qing Lan Project of Jiangsu Province and Postgraduate Research and Practice Innovation Program of Jiangsu Province (No. KYCX21\_0931).

\subsection*{Declarations}

{\bf Conflict of interest} The author declares no potential conflict of interest.

\end{document}